\documentclass[reqno,11pt]{amsart}
\usepackage[utf8]{inputenc}
\pdfminorversion=5
\DeclareUnicodeCharacter{0301}{}



\setlength{\textwidth}{\paperwidth}
\addtolength{\textwidth}{-2in}
\calclayout
\usepackage{amsthm,amsfonts,amstext,amssymb,mathrsfs,amsmath,latexsym,mathtools}
\usepackage{enumerate}
\usepackage{bm,accents}
\usepackage{mdwlist}
\usepackage[abs]{overpic}
\usepackage[mathscr]{euscript}
\usepackage{mathrsfs}
\DeclareMathAlphabet{\mathpzc}{OT1}{pzc}{m}{it}
\usepackage{parskip}
\usepackage{comment}

\usepackage{tikz}
\usetikzlibrary{shapes.geometric, arrows}

\usepackage[font=small]{caption}
\usepackage[labelformat=simple]{subcaption}

\captionsetup[subfigure]{labelformat=parens,labelfont=rm}

\usepackage{color}
\usepackage{esint} 
\usepackage[colorinlistoftodos,prependcaption,textsize=small]{todonotes}

\definecolor{red}{RGB}{255,0,0}
\definecolor{green}{RGB}{0,100,0}
\definecolor{blue}{RGB}{0,0,255}

\usepackage{cleveref}

\crefname{equation}{equation}{equations}
\crefname{figure}{Figure}{Figures}


\theoremstyle{plain}
\newtheorem{thm}{Theorem}[section]
 
\newtheorem{prop}[thm]{Proposition}
\newtheorem{lem}[thm]{Lemma}
\newtheorem{cor}[thm]{Corollary} 

\theoremstyle{definition}
\newtheorem{definition}[thm]{Definition}
\newtheorem{example}[thm]{Example}

\theoremstyle{remark}
\newtheorem{remark}[thm]{Remark}
\numberwithin{equation}{section}

\numberwithin{equation}{section}

\renewcommand{\Re}{\mathop{\rm Re}}

\renewcommand{\P}{\mathbb P}
\DeclareMathOperator{\supp}{supp}

\DeclareMathOperator{\const}{const}

\DeclareMathOperator{\diag}{diag}

\newcommand{\EE}{{\mathcal E}}

\newcommand{\N}{\mathbb{N}}
\newcommand{\C}{\mathbb{C}}
\newcommand{\R}{\mathbb{R}}
\newcommand{\Z}{\mathbb{Z}}

\newcommand{\os}[1]{\accentset{\circ}{#1}}


\newcommand\restr[2]{{
		\left.\kern-\nulldelimiterspace 
		#1 
		\vphantom{\big|} 
		\right|_{#2} 
}}

\title[Electrostatic partners and zeros]{Electrostatic partners  and zeros of orthogonal and multiple orthogonal polynomials}

\author[A. Mart\'{\i}nez-Finkelshtein]{Andrei Mart\'{\i}nez-Finkelshtein}

\address[AMF]{Department of Mathematics, Baylor University, Waco TX, USA, and Department of Mathematics, University of Almer\'{\i}a, Almer\'{\i}a, Spain}

\email{A\_Martinez-Finkelshtein@baylor.edu}

\author[R.~Orive]{Ram\'on Orive}

\address[RO]{Departamento de An\'alisis Matem\'atico, Universidad de La Laguna, Canary Islands, Spain}

\email{rorive@ull.edu.es}

\author[J.~S\'anchez-Lara]{Joaqu\'{\i}n S\'anchez-Lara}

\address[JSL]{Departamento de Matem\'atica Aplicada, Universidad de Granada, Spain}

\email{jslara@ugr.es}

\date{\today}

\keywords{Orthogonal polynomials; Multiple orthogonal polynomials; Zeros; Electrostatic model; Equilibrium; Linear differential equations}

\subjclass[2020]{Primary:  42C05; Secondary: 30C15; 31A15; 33C45; 33C47}

\begin{document}

\begin{abstract}
For a given polynomial $P$ with simple zeros, and a given semiclassical weight $w$, we present a  construction that yields a linear second-order differential equation (ODE), and in consequence, an electrostatic model for zeros of $P$. The coefficients of this ODE are written in terms of a dual polynomial that we call the electrostatic partner of $P$. This construction is absolutely general and can be carried out for any polynomial with simple zeros and any semiclassical weight on the complex plane. An additional assumption of quasi-orthogonality of $P$ with respect to $w$ allows us to give more precise bounds on the degree of the electrostatic partner. In the case of orthogonal and quasi-orthogonal polynomials, we recover some of the known results and generalize others. Additionally, for the Hermite--Pad\'e or multiple orthogonal polynomials of type II, this approach yields a system of linear second-order differential equations, from which we derive an electrostatic interpretation of their zeros in terms of a vector equilibrium. More detailed results are obtained in the special cases of Angelesco, Nikishin, and generalized Nikishin systems. We also discuss the discrete-to-continuous transition of these models in the asymptotic regime, as the number of zeros tends to infinity, into the known vector equilibrium problems. Finally, we discuss how the system of obtained second-order ODEs yields a third-order differential equation for these polynomials, well described in the literature. We finish the paper by presenting several illustrative examples. 
\end{abstract}

\maketitle

\section{Introduction}

Hermite polynomials\footnote{ \,   $[\cdot]$ stands for the integer part or the floor function.  }
\begin{equation} \label{hermiteH}
H_N(x)=N!\sum_{\ell=0}^{\left[  N/2\right] }\frac{(-1)^%
	{\ell}(2x)^{N-2\ell}}{\ell!\;(N-2\ell)!}=2^Nx^N+\dots
\end{equation}
 are probably the simplest representatives of the family of \textit{classical} orthogonal polynomials. 
They satisfy the linear differential equation
\begin{equation} \label{odeHermite}
y''(x) -2 x y'(x) +2N y(x)=0 
\end{equation}
and the orthogonality conditions
\begin{equation*}  
	\begin{split}
		&\int_{-\infty}^{+\infty} x^j H_N(x) e^{-x^2}\, dx=0, \qquad j=0,1,\dots, N-1,\\
		&\int_{-\infty}^{+\infty} x^N H_N(x) e^{-x^2}\, dx\neq 0.
	\end{split}
\end{equation*}
As a consequence, their zeros are all real and simple. A well-known calculation that goes back to Stieltjes \cite{Stieltjes1885} (see also  \cite[Theorem 6.8]{Szego75} or \cite{MR1379147}) shows that there are two equivalent physical interpretations of these zeros:
\begin{itemize}
\item as equilibrium positions of equally charged points on the plane in the presence of an external field (background potential); or
\item as an appropriately rescaled  configuration of vortices on the plane under assumption that they all  have same circulations and rotate as a rigid body.
\end{itemize}
We explain these notions in more detail in Section \ref{sec:vortices}. Both models are rooted in the linear second order differential equation \eqref{odeHermite} satisfied by these polynomials. There are several ways of obtaining this equation, all of them relying on a specific feature of  the orthogonality weight, namely the fact that its logarithmic derivative is a rational function. This idea allows to extend the classical theory   to the so-called \textit{semiclassical} orthogonal polynomials, a construction that probably goes back to J.~Shohat \cite{Shohat}, see also \cite{MR1340939, MR1637827}. This generalization preserves several convenient features of classical orthogonal polynomials, such as a Rodrigues-type formula or the existence of raising and lowering operators, see e.g.~\cite{Ismail2000} or \cite{Ismail05}; each one of these properties leads to \eqref{odeHermite}. 

The elegance of the above mentioned model attracted attention of generations of researchers and lead to several generalizations. For instance, we can choose as a starting point a second-order linear differential equation with polynomial coefficients (\textit{generalized Lam\'e equations} in algebraic form), whose particular cases are the hypergeometric and the Heun differential equation \cite{Ronveaux95}, and develop an electrostatic/vortex dynamics interpretation for the zeros of its polynomial solutions (\textit{Heine--Stieltjes polynomials}). This was carried out in the classical works of B\^ocher \cite{Bocher97}, Heine \cite{Heine1878} and Van Vleck \cite{Vleck1898}; for more modern treatment, see e.g.~\cite{MR4091604, Dimitrov:00, Grunbaum98, MR2345246}, as well as   \cite{MR2647571, MR2770010, MR2003j:33031} for the asymptotic results. 

Another approach starts from the orthogonality conditions with respect to a semiclassical (or even a more general) weight, as it was done in the pioneering work of Ismail   \cite{Ismail2000, Ismail2000c, Ismail2001}, which has been extended in many directions, see e.g.~ \cite{zbMATH07222111, MR826706, MR2873079, zbMATH06596279,  zbMATH06963441,  zbMATH07122712, zbMATH07326239},  to cite a few. One of such generalizations is the case of \textit{quasi-orthogonal polynomials}, which satisfy ``incomplete'' orthogonality conditions. As it was shown in \cite{MR3926161} in the simplest case of one condition short of full orthogonality, such polynomials also satisfy a linear second order differential equation that can be interpreted in electrostatic terms. 

\textit{Hermite-Pad\'e} or \textit{multiple} orthogonal polynomials (MOP) of type II are defined by distributing the orthogonality conditions among different weights or measures. In the simplest case of two  weights $w_1$, $w_2$, supported on $\Delta_1\subset \R$ and $\Delta_2\subset \R$, respectively, and for a given  multi-index $\bm n= (n_1,n_2)\in \Z_{\ge 0}^2$,  it is a polynomial $P_{\bm n}$ of total degree at most $N=|\bm n|:=n_1+n_2$, such that
\begin{equation} \label{typeIIintro}
	\int_{\Delta_i}x^jP_{\bm n}(x)w_i(x)dx  \begin{cases}
	=0,&\quad j\leq n_i-1,\\
	\neq 0,&\quad j=n_i,
\end{cases}
\qquad i=1, 2.
\end{equation}
Polynomial $P_{\bm n}$ appears as a common denominator of a pair of rational approximants to Markov functions related to the weights $w_i$'s, see Section~\ref{sec:generalHermitePade}. For a more detailed account of the corresponding theory we recommend the monograph \cite{niksor}, as well as the works of Aptekarev, Gonchar, Kuijlaars, Rakhmanov, Stahl, Suetin, Van Assche, Yattselev, and others, see e.g.~\cite{MR1702555, MR2475084, AptBraVA, Aptekarev:97, MR1240781, MR3602528, MR2187942, GRS97, VanAssche06, MR1808581, MR3304586, MR3907776, MR2796829, MR3058747, MR3137137}  (the list is far from complete). MOP find applications in number theory, numerical analysis, integrable systems, interacting particle systems and random matrix models \cite{MR2963452, ALT2011, MR2470930, MR2327035}. Although the general analytic theory of multiple orthogonal polynomials is in its infancy, their zeros (especially, their asymptotic behavior) have been studied in several particular situations, known as the Angelesco and Nikishin cases, described in Section~\ref{sec:special}. However, there is no known electrostatic interpretation of these zeros. 
Linear differential equations satisfied by multiple orthogonal polynomials have been found for many families of polynomials, see e.g.~\cite{AptBraVA, Aptekarev:97,  MR2214212, MR3055365}, but in all cases these are equations of order 3 and higher. The problem is that an electrostatic interpretation of the solutions to these ODE is not straightforward. 

Our main goal is to present a unified construction that yields an electrostatic model for polynomials related with a (system of) semiclassical weights. As it was mentioned, the known constructions of the differential equations for polynomials use either  a Rodrigues formula or  the so-called raising and lowering operators that can be combined into a single ODE \cite{MR2214212, MR3055365, Ismail05}. Instead of that, we  start from a construction that can associate to an arbitrary polynomial $P$ and to a semiclassical weight $w$ supported on a set $\Delta \subset \C$ another polynomial $S$,  that we have called its \textit{electrostatic partner}, see Definition~\ref{defCompanionP} and the schematic representation below.

\setlength{\unitlength}{1mm}
	\begin{center}
\begin{tikzpicture} 
	\draw[thick] (2,1) rectangle (5,2.8);
		\draw[->,thick] (1,1.4) -- (2,1.4);
			\put(6,15){$w$};
			\put(6,11){$\Delta$};
		\draw[->,thick] (1,2.4) -- (2,2.4);
			\put(6,23){$P$};
			\draw[->,thick] (5,1.9) -- (6,1.9);
				\put(61,18){$	S$};
\end{tikzpicture}
				\end{center}
			
Using $S$ and $w$ we can write a linear second order differential equation with polynomial coefficients whose solutions are $P$ and the corresponding function of the second kind $q$, defined in Section~\ref{sec:quasiorth}. This shows that the zeros of $P$ (assumed simple) are in an electrostatic equilibrium in an external field created by $w$ and by the attracting zeros of $S$ (understanding by this a stationary point of their energy, and not necessarily its local or global minimum).  This construction  uses only the semiclassical character of $w$; no orthogonality conditions on $P$ are required. An additional assumption that $P$ is quasi-orthogonal with respect to $w$ (in the complex case, we mean by that a non-hermitian orthogonality, see \eqref{orthog1}) allows us  to make more precise statements about the electrostatic partner of $P$. Moreover, two alternative representations for $S$ in this case yield a generalization of an identity involving Wronskian and Casorati determinants of $P$ and $q$, known in the case of classical orthogonal polynomials, see Section~\ref{sec:quasi}. 
			
Since the definition of type II Hermite-Pad\'e orthogonal polynomials \eqref{typeIIintro} boils down to two simultaneous quasi-orthogonality conditions,  we can associate with the corresponding MOP $P_{\bm n}$ \textit{two} electrostatic partners, $S_{\bm n, 1}$ and $S_{\bm n, 2}$, and a system of two linear differential equations of order 2, whose solution is  $P_{\bm n}$. This apparent redundancy can be used to find an electrostatic model for the zeros of $P_{\bm n}$. Namely, by a procedure similar to the definition of an electrostatic partner, we associate with $P_{\bm n}$, $w_1$ and $w_2$ a polynomial $R_{\bm n}$:
			
	\begin{center}
	\begin{tikzpicture} 
		\draw[thick] (2,1) rectangle (5,2.8);
		\draw[->,thick] (1,1.2) -- (2,1.2);
			\draw[->,thick] (1,1.9) -- (2,1.9);
		\put(4,13){$w_2$};
		\put(4,9){$\Delta_2$};
		\put(4,21){$w_1$};
		\put(4,17){$\Delta_1$};
		\draw[->,thick] (1,2.6) -- (2,2.6);
		\put(4,25){$P_{\bm n}$};
		\draw[->,thick] (5,1.9) -- (6,1.9);
		\put(61,18){$	R_{\bm n}$};
	\end{tikzpicture}
\end{center}

With this construction, the zeros of $P_{\bm n}$ and the zeros of $S_{\bm n, 1}$  (or $S_{\bm n, 2}$) are in a vector equilibrium given by their mutual interaction and by the vector external field created by $w_1$, $w_2$ and the zeros of $R_{\bm n}$, see Section~\ref{sec:multiple_orth} for details. This model is especially convenient because it is known that the asymptotic distribution of the zeros of $P_{\bm n}$ is usually described by vector equilibria. We discuss this connection and provide some heuristic arguments for this discrete-to-continuous transition in Section~\ref{sec:asymptotics}, where several particular configurations are analyzed in detail. 
So far, both two- and three-component  critical vector measures have been used to describe asymptotics in several cases. Our construction suggests that there is one universal two-component vector equilibrium valid for all known configurations corresponding to perfect systems, and that all descriptions constitute just its particular manifestations. 

In order to establish another connection with previous literature, we describe in Section~\ref{sec:ODE3} how to combine the system of ODEs from Section~\ref{sec:multiple_orth} into a third order linear differential equation whose solutions are  $P_{\bm n}$ and the corresponding functions of the second kind. An additional advantage of this construction is that it is possibly generalized to the case of more than 2 weights and explains the appearance of higher order ODEs (see Remark~\ref{remark:higherorder} in Section~\ref{sec:ODE3}).

In the last section we discuss  several examples of multiple orthogonal polynomials well known in the literature.

We hope that this approach can be applied in some other contexts; in particular, it would be interesting to explore a possible electrostatic interpretation of the zeros of the Type I multiple orthogonal polynomials, see e.g.~\cite{niksor}.

Since this paper unfortunately contains a large amount of technical details and auxiliary results (some of them, relegated to Appendices~\ref{appendixA} and \ref{sec:realcase}), we finish this introduction with a short navigation guide for the reader interested in the main highlights:
\begin{itemize}
	\item An electrostatic partner $S$ of a given polynomial $P$ (in a sense, the starting fundamental construction) is introduced in Definition~\ref{defCompanionP}, whose consistency is justified by Theorem~\ref{lem1aux}.
	\item The second order linear differential equations whose solution is $P$ is introduced in Theorem~\ref{CorolarioEDO}, which leads to an electrostatic model (Proposition~\ref{cor:criticalGeneral}) for zeros of $P$, which are shown to be in equilibrium in a field created in part by the attracting zeros of the electrostatic partner $S$. Additional properties of $S$ under assumptions that $P$ satisfies some orthogonality conditions are discussed in Section \ref{sec:quasi}.
	\item This construction is extended to a type II Hermite-Pad\'e orthogonal polynomial with respect to two weights, giving us now two second order linear differential equations (Theorem~\ref{TeoClaveHP}). Additionally, we get another set of differential equations, now for the electrostatic partners (Theorem~\ref{thm:WronskianS/w}, which is based on a construction from Proposition~\ref{prop:WronskianS/w}). 
	\item As a consequence, we derive a vector equilibrium model for the two sets of point charges: the zeros of the Hermite-Pad\'e orthogonal polynomial and the zeros of its electrostatic partner(s), Theorem~\ref{thm:vectorelectrostatics}. 
	\item More precise results about the location of the zeros of the electrostatic partners in some widely studied cases of Hermite-Pad\'e orthogonal polynomials are matter of Section~\ref{sec:special}. They allow us to discuss in Section~\ref{sec:asymptotics} the discrete-to-continuous transition in the equilibrium model as the degrees tend to $\infty$, and to compare the resulting models with the description of the asymptotic distribution of zeros in terms of the vector equilibrium, both with 2 and 3 components. In particular, Corollary~\ref{cor:asymptoticsGeneral} suggests that the universal description can be achieved using a 2-component vector equilibrium with the interaction matrix
	$$ 
	\begin{pmatrix}1&-1/2\\-1/2&1\end{pmatrix}.
	$$ 
	\item Since third order  linear differential equations associated to the multiple orthogonal polynomials are well known in the literature, we have included in Section~\ref{sec:ODE3} their derivation from the system of second order ODEs described in Theorem~\ref{TeoClaveHP}.
	\item Last, but definitely not least, we have a set of curious examples in Section~\ref{sec:examples}, whose examination poses several interesting questions and suggests possible lines of further research.	
\end{itemize}

\section{Electrostatics of point charges and vortex dynamics} \label{sec:vortices}

\subsection{Identical point charges and vortices}
\ 

We can associate with $N$ pairwise distinct points $\zeta_i$ on the plane ($\zeta_i \neq \zeta_j $  for $i\neq j$) their discrete  ``counting'' measure 
\begin{equation}\label{defMucritDiscrete}
	\mu=\sum_{k=1}^N  \delta_{\zeta_k} ,  
\end{equation}
where $\delta_x$ is a unit mass (Dirac delta) at $x$, and define its (discrete) logarithmic energy\footnote{Actually, the magnitude in \eqref{EnergyDiscrt} is twice the logarithmic energy, which is not relevant, but explains the factor $2$ in \eqref{defWeightedEnergy} introduced for consistency.}
\begin{equation} \label{EnergyDiscrt}
	\mathcal E(\mu)   :=
	\sum_{i\neq j}      \log \frac{1}{|\zeta _i-\zeta _j|}
\end{equation}
(we can extend the notion of the energy to the case when two or more $\zeta_j$'s coincide by assuming that then $\mathcal E(\mu)=+\infty$).
Additionally, given a real-valued function (\emph{external field} or \textit{background potential})  $\varphi $, finite at $\supp(\mu)$, we consider the weighted energy
\begin{equation}\label{defWeightedEnergy}
	\mathcal E_\varphi(\mu) := \mathcal E (\mu)+ 2 \sum_{k=1}^N \varphi(\zeta _k)  \,.
\end{equation}
For our purposes, it will be sufficient to assume that $\varphi=\Re \Phi$, where $\Phi$ is an analytic (in general, multivalued) function in  $ \C$, excluding its (finite number) of isolated singularities and branch points, with a single-valued derivative $\Phi'$.

\begin{definition}[\cite{MR2770010}] \label{defCriticalScalar}
	We say that $\mu$ in \eqref{defMucritDiscrete} is \emph{$\varphi$-critical} or just \emph{critical measure}  if $\supp(\mu)$ is disjoint with the set of singularities of $\varphi$, and is a stationary point of the weighted discrete energy $\EE_\varphi(\mu)=\EE_\varphi(\zeta _1, \dots, \zeta _N)$ defined in \eqref{defWeightedEnergy} :
	\begin{equation}\label{gradient}
		\nabla \, \EE_\varphi(\zeta_1, \dots, \zeta_N)=0\,,
	\end{equation}
	or equivalently,
	$$
	\frac{\partial}{\partial z} \, \EE_\varphi(\zeta_1, \dots, z, \dots \zeta_N)\big|_{z=\zeta_k} =0, \quad k = 1, \dots, N, \qquad \frac{\partial}{\partial z} = \frac{1}{2}\, \left(\frac{\partial}{\partial x} - i \frac{\partial}{\partial y}\right).
	$$
\end{definition}
We also say that the configuration of points (or charges) is in \textit{electrostatic equilibrium} in the external field $\varphi$. 
Notice that with $\varphi=\Re \Phi$, we can write explicitly the equilibrium conditions for $\EE_\varphi(\zeta_1, \dots, \zeta_N)$ as the system of equations
\begin{equation} \label{eq:condEq}
	\sum_{\substack{i, j=1 \\
			i\neq j  }}^N 
		\frac{1}{\zeta_{j}-\zeta_{i}}- \Phi^{\prime}\left(\zeta_{j}\right) =0,  \quad j=1, \dots, N.
\end{equation}
Let
\begin{equation} \label{eq:defPOly}
y(z):=\prod_{j=1}^N (z-\zeta_j);
\end{equation}
a common terminology is that $\mu$ in \eqref{defMucritDiscrete} is the \emph{zero counting measure} of the polynomial $y$, for which we will use the notation 
\begin{equation} \label{defCountingMeaure}
		\nu(y): =\sum_{j=1}^n   \delta_{\zeta_j} .
	\end{equation}
It is easy to check that  
\begin{equation} \label{eq:polyidentities}
	y'(z)=y(z) \sum_{j=1}^N \frac{1}{z-\zeta_j}, \qquad y''(z)= y(z) \sum_{\substack{i, j=1 \\
			 i\neq j  }}^N \frac{1}{(z-\zeta_i)(z-\zeta_j)},
\end{equation}
from where 
$$
\sum_{\substack{i, j=1 \\
		i\neq j  }}^N 
	\frac{1}{\zeta_{j}-\zeta_{i}}=\frac{y''}{2y'}(\zeta_j).
$$
In particular,  \eqref{eq:condEq} is equivalent to
\begin{equation} \label{eq:condEqBis}
	\left( y''- 2\Phi^{\prime} y'\right) \left(\zeta_{j}\right) =0,  \quad j=1, \dots, N.
\end{equation}

In the case when all zeros $\zeta_j$'s are on the real line and the external field is given by $\varphi(x)=x^2/2$, we get from \eqref{eq:condEqBis} that $y''-2 x y'$ matches $y$ up to a multiplicative constant. Comparing the leading coefficients we conclude that $y$ solves the differential equation \eqref{odeHermite}; in other words, the zeros of Hermite polynomials are in electrostatic equilibrium on $\mathbb R$ in the external field $\varphi(x)=x^2/2$, as observed by Stieltjes in \cite{Stieltjes1885}
\footnote{As it is pointed out in \cite{MR1379147}, Stieltjes mentions without proving that the equilibrium configuration is actually the minimum of the energy. The proof can be found for instance in \cite[Section 6.7]{Szego75}.}.  He also realized that this electrostatic model is easily generalized to all classical families of polynomials (Jacobi, Laguerre and Bessel), see  \cite{Stieltjes1885b, MR1554668}, or    \cite{Szego75} and \cite{Ismail05} for a more modern account. 

Another approach to zeros of these polynomials is via vortex dynamics. The notion of a point vortex is a classical approximation in ideal hydrodynamics of planar flow,  introduced almost 150 years ago in Helmholtz's classical paper on vortex dynamics \cite{Helmholtz1858}.  Considering the flow plane to be the complex plane, the equations of motion for $N$ point vortices with circulations $\gamma_{i}\in \R$ at positions $\zeta_i$, $i=1, \dots, N$,  in a background flow $\Psi $, is 
\begin{equation} \label{vortices1}
\overline{	\left(\frac{d \zeta_{j}}{d t} \right)}=\frac{1}{2 \pi i} \sum_{\substack{i=1 \\
			i\neq j  }}^{N} \frac{\gamma_{i}}{\zeta_{j}-\zeta_{i}}  + \frac{1}{2\pi i}\, \overline{	\Psi(\zeta_j) }, \quad j=1,2, \ldots, N.
\end{equation}
In this paper,  the overline   indicates complex conjugation.

By \eqref{eq:condEq}, stationary vortices $(d\zeta_j/dt=0$) correspond to electrostatic equilibrium in the external field $\varphi=\Re \Phi$ if the background flow $\Psi$ satisfies $\overline{\Psi (\zeta_j)}=-\Phi'(\zeta_j)$, $j=1, \dots, N$. 

Alternatively, if a vortex configuration rotates as a rigid body with angular velocity $\Omega$, then $\overline{d\zeta_j/dt}$ is equal to $\overline{\zeta_j}$ times a purely imaginary  constant  proportional to the angular velocity.
If we assume additionally that all $\zeta_j$'s are real and identical (all $\gamma_i$'s are equal) and there is no external flow field, then after rescaling \eqref{vortices1} boils down to 
\begin{equation} \label{vortices1bis}
	  \zeta_{j} = \sum_{\substack{i=1 \\
			i\neq j  }}^{N} \frac{1}{\zeta_{j}-\zeta_{i}}  , \quad j=1,2, \ldots, N.
\end{equation}
Let us use again the polynomial $y$ defined in \eqref{eq:defPOly}, known in this field as the \textit{generating polynomial} for the vortex configuration (see \cite{MR1831715}). We can rewrite the second identity in \eqref{eq:polyidentities} equivalently as
\begin{equation} \label{eq:polyidentitiesBis}
  y''(z)= -2 y(z) \sum_{j=1}^N \sum_{\substack{i=1 \\
			i\neq j  }}^N \frac{1}{(\zeta_i-\zeta_j)(z-\zeta_j)},
\end{equation}
which together with \eqref{vortices1bis} yields again the differential equation \eqref{odeHermite}. Thus,   the zeros of the Hermite polynomials give us the positions  of  vortices on $\R$ such that the configuration rotates like a rigid body. Clearly, these considerations can be extended to more general families of polynomials. 

A  reader interested in vortex dynamics should check the nice surveys \cite{MR2305271} and \cite{MR2538285}.

\subsection{Groups of point charges and vortices }
\

We can extend Definition~\ref{defCriticalScalar} to a vector setting (for our purpose, it will be sufficient  to consider two-component vectors) that allows us to handle groups of differently charged particles. Given a vector of discrete measures  $\vec \mu=(\mu_1, \mu_2)$, with  $\supp (\mu_1) \cap \supp(\mu_2)=\emptyset$, 
\begin{equation} \label{defMuJ}
	\mu_1=\sum_{k=1}^{n_1}\delta_{\zeta_k} ,\qquad \mu_2=\sum_{j=1}^{n_2}\delta_{\xi_j},
\end{equation}
a real \textit{interaction parameter} $-1<a<1$,
and a vector external field $\vec \varphi=(\varphi_1, \varphi_2)$, both $\varphi_i$ real-valued and finite at $\supp(\mu_1)\cup \supp(\mu_2)$, 
the corresponding \textit{weighted vector energy} is
\begin{equation}\label{def:EnergiaVect}
	\mathcal{E}_{\vec \varphi, a}(\vec \mu  ):=
	\mathcal{E}(\mu_1)+2a\sum_{k=1}^{n_1}\sum_{j=1}^{n_2}\log\frac{1}{|\zeta_k-\xi_j|}
	+\mathcal{E}(\mu_2)+2\sum_{k=1}^{n_1}\varphi_1(\zeta_k)+2\sum_{j=1}^{n_2}\varphi_2(\xi_j)
\end{equation}
(see the notation in \eqref{EnergyDiscrt}). 
We can restate this definition using vector notation and the  symmetric positive-definite matrix
$$M:=\begin{pmatrix}1&a\\a&1\end{pmatrix} $$
and say that the weighted vector energy in \eqref{def:EnergiaVect} corresponds to the \textit{interaction matrix} $M$.
Moreover, for measures  \eqref{defMuJ} we can write alternatively
$$
\mathcal{E}_{\vec \varphi, a}(\zeta_1,\dots,\zeta_{n_1},\xi_1,\dots,\xi_{n_2} ):=\mathcal{E}_{\vec \varphi, a}(\vec \mu  ).
$$

\begin{definition} \label{defCriticalVector}
	We say that $\vec \mu$  is a \textit{critical vector measure} for $\mathcal{E}_{\vec \varphi, a }$ if $\supp(\mu_1)\cup \supp(\mu_2)$ is a stationary configuration for  $\mathcal{E}_{\vec \varphi, a }$:
	$$
	\nabla \, \mathcal{E}_{\vec \varphi, a }(\zeta_1,\dots,\zeta_{n_1},\xi_1,\dots,\xi_{n_2})=0.
	$$
\end{definition}

For any Borel measure $\mu $ on $\C$ we can define its \textit{logarithmic potential},
\begin{equation} \label{defLogPot}
	U^\mu(z)  :=  \int \log\frac{1}{|z-t|}\, d\mu(t).
\end{equation}
From the expression for $\mathcal{E}_{\vec \varphi, a}$ it follows that Definition \ref{defCriticalVector} is equivalent to simultaneous equilibrium conditions
\begin{equation} \label{VectorequilConditions}
	\begin{split}
		\mu_1 \text{ is $F_1$-critical, with } F_1 & := a\,  U^{\mu_2} + \varphi_1, \\ 
		\mu_2 \text{ is $F_2$-critical, with } F_2 & := a\,  U^{\mu_1} + \varphi_2.
	\end{split}
\end{equation}

Alternatively, consider the situation when in the absence of the background flow, the circulations $\gamma_i$'s in \eqref{vortices1} take only two possible values,  
$$
\gamma_{k}= \begin{cases}\gamma>0, & \text { for } \quad k=1,2, \ldots, n_1, 
	\\ -\gamma, & \text { for } \quad k=n_1+1, n_1+2, \ldots, n_1+n_2=N
\end{cases}
$$
We can rename $\xi_k:=\zeta_{n_1+k}$,  $k=1,\dots, n_2$; in this way, for stationary vortices  we have the equations 
\begin{equation} \label{groupvortices1}
	\begin{split}
	  \sum_{\substack{i=1 \\
			i\neq j  }}^{n_1} \frac{1}{\zeta_{j}-\zeta_{i}} & =  \sum_{k=1 }^{n_2} \frac{1}{\zeta_{j}-\xi_{k}} , \quad j=1,2, \ldots, n_1,  \\
		 \sum_{\substack{i=1 \\
				i\neq j  }}^{n_2} \frac{1}{\xi_{j}-\xi_{i}} & =  \sum_{k=1 }^{n_1} \frac{1}{\xi_{j}-\zeta_{k}}   , \quad j=1,2, \ldots, n_2.
	\end{split}
\end{equation}
To study these vortex patterns, we define again the generating polynomials
$$
y(z)=\prod_{j=1}^{n_1}\left(z-\zeta_{j}\right), \quad v(z)=\prod_{k=1}^{n_2}\left(z-\xi_{k}\right).
$$
Formulas  \eqref{eq:polyidentities} and \eqref{eq:polyidentitiesBis} show that \eqref{groupvortices1} yield the bilinear  identity
\begin{equation}\label{eq:Tkach}
	y'' v - 2 y' v' + y v'' =0. 
\end{equation}
This is currently known as Tkachenko's equation, since it was first derived by Tkachenko in his dissertation in 1964. Polynomial solutions of this equation were studied by Burchnall and Chaundy  \cite{MR1576413}. Adler and Moser \cite{MR501106} showed that \eqref{eq:Tkach} is solved by two consecutive polynomials that nowadays are know as Adler-Moser polynomials, see also \cite{MR2924501}. Moreover, comparing \eqref{groupvortices1} with \eqref{eq:condEq} and \eqref{VectorequilConditions} we conclude that the zeros of  consecutive Adler-Moser polynomials are stationary configurations (or equivalently,  $\vec \mu=(\mu_1,  \mu_2)$ defined in  \eqref{defMuJ} is a critical vector measure) for the vector energy $\mathcal{E}_{\vec \varphi, a}(\vec \mu )$ defined in  \eqref{def:EnergiaVect}, with 
$$
\vec \varphi\equiv (0,0) \quad \text{and} \quad a=-\frac{1}{2},
$$
fact that was already observed in \cite{MR824780}.

Further generalizations of these ideas, and in particular, their relation to rational solutions of Painlev\'e equations, can be found in  \cite{MR2305271, MR3335711, MR2538285, MR2402236, MR2989511,MR3508236, MR2530570, MR2336164, Loutsenko:2004, MR1831715}, to cite a few.

\section{Electrostatics for semiclassical weight}\label{sec:quasiorth}

In the rest of the paper we will try to adhere to the following notational convention, whenever possible: we will use capital letters to denote polynomials, and small letters to indicate general, usually multivalued, functions. A few exceptions of these rules will be clearly indicated.

\subsection{The semiclassical weight} \label{sec:generalcase}
\

We  start from a monic polynomial
$$
A(z)=z^{\deg(A)} + \text{lower degree terms;}
$$
we use the notation $\mathbb A$ for the set of zeros of $A$ on $\C$ and admit $\mathbb A=\emptyset$. Let
$$
\Delta := \Gamma_1 \cup \dots \cup \Gamma_k,
$$
where each $\Gamma_j$, $j=1,\dots, k$, is an oriented Jordan piece-wise analytic arc joining pairs of points from $\mathbb A\cup \{\infty\}$  and not containing any other point from $\mathbb A$ in its interior  $\os \Gamma_j$. For simplicity, we assume that the interiors of $\Gamma_j$'s are pairwise disjoint ($\os \Gamma_i\cap \os \Gamma_j=\emptyset$, $i\neq j$), but as it will be clear from what follows, this is not an actual restriction. 

Given another polynomial, $B$, 
we define, up to a normalization constant,  (multivalued) analytic functions  in $\C$,
\begin{equation} \label{defW}
w(z):=\exp \left( \int ^z \frac{B(t)}{ A(t)}\, dt \right), \quad v(z):= A(z) w(z),
\end{equation}
with only possible singularities (either isolated or branch points) at $\mathbb A\cup \{\infty\}$. Notice that a priori we do not assume that $A$ and $B$ are relatively prime. This implies in particular, that $B/A$ can be analytic at some zero of $A$, so that not all end-points of the arcs $\Gamma_j$'s are necessarily branch points, zeros or isolated singularities of $w$. 

The orientation of the arcs $\Gamma_j$ in $\Delta$ defines the left- and right-side boundary values of the function $w$ on $\Gamma_j$ that we denote by $w_+$ and $w_-$, respectively; two different values of $w_+$, as well as $w_+$ and $w_-$, differ by a multiplicative constant. We  fix on $\Delta$ the \textit{weight} by assuming that on each component  $\Gamma_j$ it coincides, up to a non-zero multiplicative constant, with $w_+$; for the sake of simplicity of notation, we will be denoting this weight by the same letter $w$. A fundamental assumption is that such a weight 
 has finite moments:
$$
\int_\Delta |z|^m |w(z)| |dz|<+\infty, \quad m=0, 1, 2, \dots
$$
As consequence, for $m=0, 1, 2, \dots$, 
\begin{equation}\label{endpoints}
z^m	v(z)=0 \text{ at every endpoint of a subarc of } \Delta.
\end{equation}
Clearly, $w$ is piece-wise differentiable on $\Delta$ and
\begin{equation} \label{ratW}
\frac{w'(z)}{w(z)}= \frac{B(z)}{ A(z)}, \quad z\in \Delta;
\end{equation}
this equality is actually valid in $\C\setminus \mathbb A$. 

The weight  $w$ is known as \textit{semiclassical}, and the value
\begin{equation} \label{def:class}
\sigma := \max \{\deg(A)-2, \deg(B)-1 \}
\end{equation}
is often referred to as its \textit{class}, see e.g.~\cite{MR1340939,MR1637827}. This is ambiguous in the case when $A$ and $B$ have a common factor, so we prefer to say that $\sigma$ is the \textit{class of the pair $(A,B)$} and  assume $\sigma\geq 0$. Relation \eqref{ratW} can be written in the form
$$
\left( A w\right)'-\left( A'+B\right)w=0,
$$
known as the \textit{Pearson differential equation}, see e.g.~\cite{chihara:1978, Szego75}.

\begin{example} \label{exampleJacobi1}
The simplest and well known example is the case of the Jacobi weight, when 
\begin{equation} \label{ABJacobi}
A(x)=x^2-1, \quad B(x)=(\alpha+\beta) x +  \alpha-\beta.
\end{equation}
If $\Re \alpha, \Re \beta >-1$ then condition \eqref{endpoints} is satisfied for $\Delta=[-1,1]$. 

This is an example of a \textit{classical} weight ($  \sigma=0$); here
$$
w(z)=(z-1)^{\alpha } (z+1)^{\beta }, \quad v(z)=(z-1)^{\alpha+1} (z+1)^{\beta+1 }.
$$
\end{example}
\begin{example} \label{exampleAngelescoJJ}
With
$$
A(x)=x(x-a)(x-b), \quad b<0<a, \quad B(x)=\alpha x(x-b)+ \beta x (x-a)+ \gamma (x-a)(x-b),
$$
we have 
$$
w(x)=(x-a)^{\alpha } (x-b)^{\beta } x^{\gamma }, \quad v(x)=(x-a)^{\alpha+1} (x-b)^{\beta+1} x^{\gamma+1}.
$$
If $\alpha, \beta, \gamma>-1$, we may take  
$$
\Gamma_1=[b,0], \quad \Gamma_2=[0,a], \quad \Delta=\Gamma_1\cup \Gamma_2,
$$
so that condition \eqref{endpoints}  holds. 
This is a  semiclassical weight of class $\sigma=1$.
\end{example}

\subsection{The electrostatic partner} \label{sec:companion}
\

In this section we carry out a purely formal construction that will gain content with additional assumptions in Sections \ref{sec:quasi} and \ref{sec:multiple_orth}. 

For any $w$-integrable function $f$ on $\Delta$ we define its \textit{weighted Cauchy transform}  
\begin{equation} \label{defCauchy}
 \mathfrak C_w[f](z):=\int_\Delta\frac{f(t)w(t)}{t-z}dt,
\end{equation} 
holomorphic in $\C\setminus \Delta$. The case of $f\equiv 1$ is particularly important, so we will use a brief notation
\begin{equation} \label{defwhat}
\widehat w(z):= \mathfrak C_w[1](z)=\int_\Delta\frac{ w(t)}{t-z}dt;
\end{equation}
$\widehat w$ is also known as a \textit{Markov function} related to the weight $w$.
 
Given a polynomial  $P\not\equiv 0$, its \textit{polynomial\footnote{  \, Hilbert's Nullstellensatz implies that $P(t)-P(z)$ can be factored as $(t-z) \tilde P(t,z)$, where $\tilde P$ is a polynomial in its both variables, which implies that $Q$ defined in \eqref{defQnew} is a polynomial in $z$. } of the second kind} $Q$ is defined as
\begin{equation} \label{defQnew}
	Q(z):= \int_\Delta\frac{P(t)-P(z)}{t-z} w(t)dt.
\end{equation}
Additionally,  we call 
\begin{equation} \label{defqN}
	q(z):= 	\frac{	\mathfrak C_w[P](z)}{ w(z)}, \quad z\in \C\setminus\Delta ,
\end{equation}
the corresponding \textit{function of the second kind} of $P$. They are related by the evident identity
\begin{equation} \label{identityQ}
 P(z) \widehat w(z) + Q(z ) =  \mathfrak C_w[P](z) = q(z) w(z).
\end{equation} 
Although $q$ is not necessarily single-valued in $\C\setminus \Delta$, 
\begin{equation} \label{Wq}
w q' =  	(\mathfrak C_w[P])' - 	\mathfrak C_w[P] \,  \frac{w' }{w}  =  	(\mathfrak C_w[P])' - 	\mathfrak C_w[P]  \, \frac{B }{ A }
\end{equation}
is meromorphic in $\C\setminus \Delta$, with only possible poles at the poles of $B/A$.  

We denote by $	\mathfrak{Wrons}[f_1, \dots, f_k] $ the Wronskian determinant of the functions $f_1, \dots, f_k$, namely
\begin{equation} \label{eq:Wronskian}
	\mathfrak{Wrons}[f_1, \dots, f_k] := \det \begin{pmatrix}
		f_1 & \dots & f_k  \\
			f_1' & \dots & f_k'  \\
		\vdots & \dots & \vdots \\
		f_1^{(k-1)} & \dots & f_k^{(k-1)}
	\end{pmatrix}.
\end{equation}
Finally, for a polynomial $P$, we define the transform
\begin{equation} \label{defTransform2}
\mathfrak D_w[P]	 : =    \det
	\begin{pmatrix} P  & 	\mathfrak C_w[P]   \\A   P' & A \left( \mathfrak C_w[P] \right)' -B		\mathfrak C_w[P] \end{pmatrix} =   v \,  \mathfrak{Wrons}[P,q] ,
\end{equation}
(with $v$ from \eqref{defW}), a priori  holomorphic in $\C\setminus \Delta$.

\begin{thm} \label{lem1aux}
If $P$ is a polynomial of degree $N\in \N$ then there exist a polynomial $U$ of degree $\le N-1$ and a polynomial $H$ of degree $\le  \sigma$ such that
	\begin{equation} \label{polynD}
\mathfrak D_w[P]  =  \det \begin{pmatrix}
		P  & \mathfrak C_w[P]  \\
		U  &  \mathfrak C_w[U] + H
	\end{pmatrix}  .
\end{equation}
Moreover, $\mathfrak D_w[P]$ is a polynomial of degree $\le N+\sigma$.
\end{thm}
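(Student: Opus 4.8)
The plan is to start from the determinantal representation of $\mathfrak{D}_w[P]$ in \eqref{defTransform2}, expand it as
$$ \mathfrak{D}_w[P] = P\bigl(A(\mathfrak{C}_w[P])' - B\,\mathfrak{C}_w[P]\bigr) - A P'\,\mathfrak{C}_w[P], $$
and rewrite the bottom-right entry $A(\mathfrak{C}_w[P])' - B\,\mathfrak{C}_w[P]$ in the shape $\mathfrak{C}_w[\,\cdot\,] + (\text{polynomial})$. Once this is achieved, a single row operation inside the determinant will produce the form \eqref{polynD}, while the polynomiality and the degree bound will follow from the decomposition \eqref{identityQ} together with the decay of Cauchy transforms at infinity.

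The technical heart is the identity
$$ A(\mathfrak{C}_w[P])' - B\,\mathfrak{C}_w[P] = \mathfrak{C}_w[AP'] + H_P, \qquad \deg H_P \le \sigma, $$
which I would establish by integration by parts. First I would differentiate under the integral sign, $(\mathfrak{C}_w[P])'(z) = \int_\Delta P(t)w(t)(t-z)^{-2}\,dt$, and pull the factors $A(z)$, $B(z)$ inside using the divided differences $\widetilde A(t,z) := (A(t)-A(z))/(t-z)$ and $\widetilde B(t,z) := (B(t)-B(z))/(t-z)$, which are polynomials in both variables. Writing $v = Aw$ and using $v' = (A'+B)w$ from \eqref{ratW}, the second-order pole is removed by the integration-by-parts identity $\int_\Delta P(t)v(t)(t-z)^{-2}\,dt = \mathfrak{C}_w[AP' + (A'+B)P]$, where the boundary terms vanish precisely because of \eqref{endpoints}. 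Collecting terms and using the ``shift'' identity $z^j\mathfrak{C}_w[g] = \mathfrak{C}_w[t^j g] + (\text{polynomial of degree}\le j-1)$ together with $\widetilde A(t,t) = A'(t)$, all the transcendental contributions collapse to $\mathfrak{C}_w[AP' + A'P] - \mathfrak{C}_w[A'P] = \mathfrak{C}_w[AP']$, while the leftover polynomials have degree at most $\max(\deg A - 2,\, \deg B - 1) = \sigma$. This degree bookkeeping is the step I expect to be the most delicate.

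Next I would reduce the (generally too large) polynomial $AP'$ modulo $P$: writing $AP' = PG + U$ with $\deg U \le N-1$ and $\deg G \le \deg A - 1$, and invoking $\mathfrak{C}_w[PG] = G\,\mathfrak{C}_w[P] + (\text{polynomial of degree}\le \deg G - 1 \le \sigma)$, I obtain $A(\mathfrak{C}_w[P])' - B\,\mathfrak{C}_w[P] = G\,\mathfrak{C}_w[P] + \mathfrak{C}_w[U] + H$ with $\deg H \le \sigma$. Subtracting $G$ times the first row from the second row of the determinant in \eqref{defTransform2} (which leaves the value unchanged) then yields exactly \eqref{polynD}, with $U$ and $H$ of the claimed degrees.

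Finally, for polynomiality and the degree, I would substitute $\mathfrak{C}_w[P] = P\widehat w + Q$ and the analogous $\mathfrak{C}_w[U] = U\widehat w + Q_U$, with $Q_U(z) = \int_\Delta (U(t)-U(z))(t-z)^{-1}w(t)\,dt$, from \eqref{identityQ}; the Markov-function terms cancel, giving $\mathfrak{D}_w[P] = P\,Q_U - U\,Q + P\,H$, which is manifestly a polynomial. For the degree, I would use that $\mathfrak{C}_w[P]$ and $\mathfrak{C}_w[U]$ are $O(1/z)$ as $z\to\infty$ (expand $(t-z)^{-1}$ in powers of $1/z$), so that $P\,\mathfrak{C}_w[U] - U\,\mathfrak{C}_w[P]$ is $O(z^{N-1})$; being a polynomial, it has degree $\le N-1$. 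Since $\deg(PH)\le N+\sigma$ and $\sigma\ge 0$, I conclude $\deg \mathfrak{D}_w[P] \le N+\sigma$.
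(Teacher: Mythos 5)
Your proposal is correct, and its first two stages coincide with the paper's proof: the integration-by-parts identity $A(\mathfrak C_w[P])' - B\,\mathfrak C_w[P] = \mathfrak C_w[AP'] + H_P$ with $\deg H_P\le\sigma$ is exactly the paper's \eqref{identity1bis} combined with the commutator $\mathfrak C_w[BP]-B\,\mathfrak C_w[P]$ being a polynomial of degree $\le \deg B-1$; your quotient $G$ in the division $AP'=PG+U$ is the paper's polynomial $E$ (the polynomial part of $AP'/P$ at infinity); and the row operation is the same. Where you genuinely diverge is the last step. The paper proves polynomiality of $\mathfrak D_w[P]$ by a Riemann--Hilbert argument: it forms the matrix $\bm Y$ with rows $(P,\mathfrak C_w[P])$ and $(U,\mathfrak C_w[U]+H)$, notes that the Sokhotsky--Plemelj jump across $\Delta$ is unipotent upper-triangular so $\det\bm Y$ has no jump, checks the endpoint behavior to conclude $\det\bm Y$ is entire, and invokes Liouville. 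You instead substitute $\mathfrak C_w[P]=P\widehat w+Q$ and $\mathfrak C_w[U]=U\widehat w+Q_U$, so the Markov-function terms cancel algebraically and $\mathfrak D_w[P]=P\,Q_U-U\,Q+PH$ is a polynomial by inspection; the degree bound then follows from the $\mathcal O(1/z)$ decay of the Cauchy transforms exactly as in \eqref{asymptPN}. Your route is more elementary and avoids any discussion of the local behavior of $\widehat w$ at the endpoints of $\Delta$ (which the paper must address to rule out non-removable singularities of $\det\bm Y$); the paper's route generalizes more readily to settings where an explicit polynomial-plus-Markov decomposition is unavailable. The only point deserving a word of care in your version is the $\mathcal O(1/z)$ expansion when $\Delta$ is unbounded, which must be taken in the asymptotic sense along directions staying away from $\Delta$ --- but this is the same convention the paper itself adopts, and it suffices to bound the degree of the polynomial $P\,Q_U-U\,Q$.
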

\begin{proof}
	We start with an identity for $ 	\mathfrak C_w[P] $: for $z\in \C \setminus \Delta$,
	\begin{equation} \label{identity1bis}
	 \mathfrak C_w[A P ' + B P ](z)= 	A(z) \left( \mathfrak C_w[P]\right)  '(z)  -D(z),
	\end{equation}
where
	\begin{equation} \label{identity1bis1}
	D(z):= 	 \int_\Delta \frac{A(z)-A(x) +A'(x)(x-z)}{(x-z)^2}P (x) w(x)\, dx
\end{equation}
is a polynomial of degree $\le \deg(A)-2$. In particular, $D\equiv 0$ if $\deg(A) \le 1$. 

The identity can be established by direct calculation:  integrating by parts and with account of \eqref{endpoints}--\eqref{ratW},
	\begin{align*}
		\mathfrak C_w[A P ' + B P ](z) & = \int_\Delta \frac{A (x)}{x-z}\frac{d\left(P w \right)}{dx} \, dx = - \int_\Delta \frac{d }{dx}\left(\frac{A (x)}{x-z}\right)   P  (x)w (x) dx,
	\end{align*}
	so that the left-hand side in  \eqref{identity1bis} is equal to
	$$
\int_\Delta \frac{A(x) -A'(x)(x-z)}{(x-z)^2}P (x) w(x)\, dx, 
	$$
	and \eqref{identity1bis} follows.
	
Denote by $E $ the polynomial part of the expansion of $ A P ' / P    $ at $\infty$,   i.e.
	$$
	A(z)\frac{P '}{P }(z) = E (z)+ \mathcal O\left(\frac{1}{z} \right), \quad z\to \infty.
	$$
It is easy to see that $E $  is polynomial of degree  $\leq \deg(A)-1$. In this way,  
	\begin{equation}\label{defU}
		U :=AP ' - E  P   
	\end{equation}
  is a polynomial of degree $\leq N-1$.
	
By \eqref{identity1bis},
	\begin{align*}
		\mathfrak C_w[U ]   & = \mathfrak C_w[AP' - E  P ]= \mathfrak C_w[A P' + B P]  + \mathfrak C_w[(-E-B) P]  \\
		& = A  \left(\mathfrak C_w[P]\right)'   - (B+E)(z) \mathfrak C_w[P ] - H ,
	\end{align*}
	where, with the definition \eqref{identity1bis1},
	\begin{equation}\label{DefH}
	H(z):= D(z)+ \int_\Delta   \frac{(E+B) (x)-(E+B) (z) }{x-z}     \,  p(x) w(x)\, dx
	\end{equation}
	is a polynomial of degree    $\le  \sigma$.  In particular (see \eqref{Wq}),
	\begin{equation} \label{identityUhat}
v q' = Aw q' =	 A \left(\mathfrak C_w[P]\right)'  -  B  \mathfrak C_w[P ] =   \mathfrak C_w[U ]     +E  \mathfrak C_w[P ]  + H.
\end{equation}
	
Using \eqref{defU} and \eqref{identityUhat} in \eqref{defTransform2}    we conclude that
	$$
	\mathfrak D_w[P]	
	 =   \det
	\begin{pmatrix} P  & 	\mathfrak C_w[P ]  \\AP '&  \mathfrak C_w[U ]      +E  \mathfrak C_w[P ]  + H    \end{pmatrix}
	=    \det
\begin{pmatrix} P  & 	\mathfrak C_w[P ]   \\ U &  \mathfrak C_w[U ]        + H   \end{pmatrix} ,
	$$
	establishing \eqref{polynD}.
	
 In order to show that $\mathfrak D_w[p]	$ is a polynomial  (of degree at most $N +\sigma $) we use the standard arguments from the Riemann--Hilbert characterization of orthogonal polynomials (see e.g. \cite{MR2000g:47048}). Namely, denote by $\bm Y$ the matrix in the right-hand side of \eqref{polynD}, 
	$$
	\bm Y(z):=  \begin{pmatrix} P  & 	\mathfrak C_w[P ]   \\
		U  &  \mathfrak C_w[U ] + H\end{pmatrix}  ,
	$$
which by definition is holomorphic in $\C\setminus \Delta$. The Sokhotsky-Plemelj formulas imply that $\bm Y$ satisfies that
	\begin{equation} \label{JumpCondition}
	\bm Y_+(x)=\bm Y_-(x) \, \begin{pmatrix}
		1 & w(x) \\
		0 & 1
	\end{pmatrix}, \quad x \in \Delta,
\end{equation}
	where $ \bm Y_\pm$ denote the boundary values of $ \bm Y$ on $\Delta$ from the left/right sides, respectively. Taking determinants in both sides of \eqref{JumpCondition} the Morera theorem yields that   $\det(\bm Y)$ is analytic across $\Delta$. Since the first column of $\bm Y$ is bounded at each finite point of $\C$, and the local behavior of the second column at the end points of $\Delta$ essentially matches that of $\widehat w$ (see \cite[Ch.\ 1]{Gakhov}), it  follows that  $\det(\bm Y)$ is  an entire function. Moreover, since $H$ is a polynomial of degree $\le  \sigma$, there exist a constant $c\in \C\setminus\{0\} $ and $s\in \N \cup \{0\}$, $s\le \sigma$, such that
	$$
	\bm Y(z)= \begin{pmatrix} 1 & 0 \\ 0 & c  \end{pmatrix}\left(\bm I + \mathcal O\left(\frac{1}{z}\right) \right)\, \diag\left(z^{N}, z^{ s} \right), \quad z \to \infty.
	$$
	Liouville's theorem shows that $\det(\bm Y)=	\mathfrak D_w[P]	$ is a polynomial of degree  $N+ s \le N +\sigma $, which concludes the proof. 
\end{proof}

\begin{remark} \label{rem1}
A relation of the form \eqref{identityUhat} was used in \cite{Magnus} as a definition of the semiclassical character of the weight, and in fact, it characterizes \eqref{ratW}.  
\end{remark}

\begin{definition} \label{defCompanionP}
Let $P\neq 0$ be a polynomial. Then we call the polynomial
\begin{equation} \label{defCompanion}
S := \mathfrak D_w[P],
\end{equation}
defined by \eqref{defTransform2}, the \textit{electrostatic partner} of $P$ induced by the weight $w$.
\end{definition}

\begin{remark} \label{remark:scaling}
The definition \eqref{defCompanion} shows that normalization of $S$ is equivalent to normalization of $P$: the scaling $P \mapsto \lambda P$, $\lambda\in \C$, is equivalent to $S\mapsto \lambda^2 S$. As it will be clear in the next section (Proposition \ref{cor:criticalGeneral}), the actual normalization of $S$ is irrelevant to the electrostatic model.
\end{remark}

Some properties of the electrostatic partner $S$ and of the function of the second kind $q$ of $P$ are established in the Appendix \ref{appendixA}.

\subsection{The differential equation and electrostatic model} \label{sec:theODE}
\

\begin{thm}\label{CorolarioEDO}
	Let $P$ be a polynomial, $q$ its function of the second kind defined in \eqref{defqN}, and $S$ the electrostatic partner defined in \eqref{defCompanion}. Then $P$ and $q$ are two solutions of the same second order linear differential equation with polynomial coefficients
	\begin{equation} \label{odegeneral}
	AS y''  +  (A'S -AS' + BS)  y'  + C y   = 0 .
	\end{equation}
If $\deg(A)\leq 1$ then 	  $C=  \mathfrak D_{v}[P']$, with $v=Aw$. 
\end{thm}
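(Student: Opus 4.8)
The plan is to use that $P$ and $q$ are two solutions of one and the same second order linear ODE, whose coefficients can be read off from a Wronskian determinant. First I would write the (up to a common factor, unique) equation annihilating both $P$ and $q$ in the form
\[
\det\begin{pmatrix} y & P & q \\ y' & P' & q' \\ y'' & P'' & q'' \end{pmatrix}=0 ,
\]
and expand along the first column. Since the cofactor of $y'$ is $-(Pq''-P''q)=-\mathfrak{Wrons}[P,q]'$ and the cofactor of $y$ is $P'q''-P''q'=\mathfrak{Wrons}[P',q']$, this produces $\mathfrak{Wrons}[P,q]\,y''-\mathfrak{Wrons}[P,q]'\,y'+\mathfrak{Wrons}[P',q']\,y=0$, an equation solved by both $P$ and $q$ by construction.

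Next I would turn this into an equation with polynomial coefficients. By \eqref{defCompanion} and \eqref{defTransform2} we have $S=v\,\mathfrak{Wrons}[P,q]$, hence $\mathfrak{Wrons}[P,q]=S/v$; multiplying the previous equation by $Av$ and using $v=Aw$ together with $v'/v=(A'+B)/A$ (a consequence of \eqref{ratW}), the leading coefficient becomes $Av\cdot S/v=AS$, and the first order coefficient becomes $-Av\,(S/v)'=-A(S'-Sv'/v)=A'S-AS'+BS$. The remaining coefficient is $C=Av\,\mathfrak{Wrons}[P',q']$; that it is a polynomial follows either from the Sokhotski--Plemelj/Liouville argument of Theorem~\ref{lem1aux} applied now to the pair $(P',q')$, or a posteriori from the identification below. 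This yields \eqref{odegeneral}.

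For the last assertion I would read $v=Aw$ as a semiclassical weight in its own right, with defining pair $(A,\,A'+B)$, so that the transform of Definition~\ref{defCompanionP}/\eqref{defTransform2} applied to $P'$ is $\mathfrak D_v[P']=Av\,\mathfrak{Wrons}[P',\widetilde q]$, where $\widetilde q:=\mathfrak C_v[P']/v$ is the second kind function of $P'$ with respect to $v$ and $\mathfrak C_v[P']=\mathfrak C_w[AP']$. Comparing with $C=Av\,\mathfrak{Wrons}[P',q']$, the claim $C=\mathfrak D_v[P']$ is equivalent to $\mathfrak{Wrons}[P',q']=\mathfrak{Wrons}[P',\widetilde q]$. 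To compare $q'$ and $\widetilde q$ I would invoke \eqref{identity1bis}: because $\deg(A)\le1$ forces $D\equiv0$, it gives $\mathfrak C_w[AP'+BP]=A\,(\mathfrak C_w[P])'$, so that $v\widetilde q=\mathfrak C_w[AP']=A\,(\mathfrak C_w[P])'-\mathfrak C_w[BP]$, while the $(2,2)$ entry of \eqref{defTransform2} gives $vq'=A\,(\mathfrak C_w[P])'-B\,\mathfrak C_w[P]$.

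The step I expect to be the main obstacle is controlling the difference $v(q'-\widetilde q)=\mathfrak C_w[BP]-B\,\mathfrak C_w[P]=\int_\Delta\frac{(B(t)-B(z))\,P(t)w(t)}{t-z}\,dt$, a polynomial of degree at most the class $\sigma$, and showing that its contribution to $Av\,\mathfrak{Wrons}[P',\,\cdot\,]$ cancels, so that $C$ reduces to $\mathfrak D_v[P']$. This is exactly where the hypothesis $\deg(A)\le1$ enters decisively (through $D\equiv0$), and it is the analytic shadow of the classical fact that differentiation shifts the weight $w\mapsto v=Aw$. Concretely I would expand $\mathfrak D_v[P']$ from its determinant form \eqref{defTransform2} and match it term by term against $C=Av\,\mathfrak{Wrons}[P',q']$ after substituting the two displayed expressions for $vq'$ and $v\widetilde q$; this term-by-term bookkeeping is the one genuinely computational point, and it is where one must verify that the polynomial correction really drops out.
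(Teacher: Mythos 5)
Your derivation of \eqref{odegeneral} itself is correct and is essentially the paper's own argument: expand $\mathfrak{Wrons}[y,P,q]=0$ along the first column, multiply through by $Av$, and use $S=v\,\mathfrak{Wrons}[P,q]$ together with $v'/v=(A'+B)/A$ to identify the coefficients $AS$ and $A'S-AS'+BS$. (For the polynomiality of $C$ the paper proceeds slightly differently, exhibiting $C$ as $\mathfrak D_v[P']$ plus a manifestly polynomial determinant, but your Riemann--Hilbert route for the pair $(P',q')$ is viable, since $vq'$ has the same additive jump $-2\pi i\,P'v$ across $\Delta$ as $\mathfrak C_v[P']$.)

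The gap is in the last assertion, and you have located it precisely but not closed it. With $\widetilde q:=\mathfrak C_v[P']/v$ and $G:=v(q'-\widetilde q)=\mathfrak C_w[BP]-B\,\mathfrak C_w[P]$ (your formula, valid since $\deg A\le 1$ forces $D\equiv0$), multilinearity of the determinant in its second column gives
\[
C-\mathfrak D_v[P']=\det\begin{pmatrix}P' & G\\ AP'' & AG'-(A'+B)G\end{pmatrix},
\]
and you \emph{hope} this vanishes. It does not, for a general polynomial $P$: take $w(x)=e^{-x^2}$ on $\R$ (so $A=1$, $B=-2x$, $v=w$) and $P=x^2$; then $G=-2\int t^2e^{-t^2}\,dt=-\sqrt{\pi}$ is a nonzero constant and the determinant equals $-4\sqrt{\pi}\,x^2+2\sqrt{\pi}$. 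What would kill $G$ is the identity $\mathfrak C_w[BP]=B\,\mathfrak C_w[P]$, which is exactly \eqref{eq:CauchTrEq} with $T=B$ and therefore requires $P$ to satisfy at least $\deg(B)$ orthogonality conditions with respect to $w$ --- a hypothesis absent from the statement you are proving. So the ``term-by-term bookkeeping'' you defer is not mere bookkeeping: the correction genuinely survives unless quasi-orthogonality (or $\deg B\le 0$) is assumed. You have in fact put your finger on a real subtlety --- the paper's proof at the corresponding step writes $vq'=\mathfrak C_w[AP']+D$, which tacitly invokes the same identity $\mathfrak C_w[BP]=B\,\mathfrak C_w[P]$ --- but as written your proposal does not establish the final claim, and no amount of cancellation will make it true for arbitrary $P$.
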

\begin{proof}
	Away from the zeros of $A$, the formal identity
	\begin{equation}    	\label{2.1}
	A (z) v(z) 	\mathfrak{Wrons}[y, P, q] (z)=	A^2(z) w(z) \det	\begin{pmatrix}
			y &P &q \\
			y'&P  '&q  '\\
			y'' &P ''&q  ''
		\end{pmatrix}(z)=0
	\end{equation}
	is clearly satisfied by $y=P$ and $y=q $, and thus, by any linear combination of these two functions. Expanding the determinant along the first column yields  the following second order differential equation with respect to $y$:
	\begin{equation}
		f(z) =  f_2(z) y''(z) + f_1(z) y'(z) + f_0(z) y (z) = 0 ,
		\label{ode2.2}
	\end{equation}
	where  (see \eqref{defTransform2}),
	\begin{align}
		f_2  &=
		A v \det		\begin{pmatrix}
			P & q  \\
			P' & q '
		\end{pmatrix}  	,
		\label{2.2}
		\\
		f_1  & =  - A v	\det	\begin{pmatrix}
			P  & q  \\
			P '' & q ''
		\end{pmatrix},
		\label{2.3}
		\\
		f_0 & =
		A v \det		\begin{pmatrix}
			P' & q' \\
			P'' & q''
		\end{pmatrix}(z)= \det	
		\begin{pmatrix}
			P' &  v  q' \\
			A P'' & A v  q''
		\end{pmatrix}.
		\label{2.4}
	\end{align}
By \eqref{defTransform2}, $f_2=AS$, and thus it is a polynomial. Furthermore,  differentiating $f_2$ and using \eqref{defW}, \eqref{mainIdentitySN}, it is straightforward to deduce that
	$$
	f_1 	=A'S -AS' + BS
	$$
	is also a polynomial.

	By \eqref{Wq} and \eqref{identity1bis},
	$$
	v q' =    A	\left(\mathfrak C_w[P] \right)' - 	B \mathfrak C_w[P]   = \mathfrak C_w[AP']     + D =   \mathfrak C_{v}[ P '   ] + D .
	$$
	Differentiating this identity and using \eqref{ratW} we obtain that
	$$
	A v q'' = A  \left( \mathfrak C_{v}[ P '   ] \right)' - (A'+B) \mathfrak C_{v}[ P '   ]  + \left[ AD'-A'D -BD \right].
	$$
	Thus,
	\begin{align*}
		f_0 & =   \det	
		\begin{pmatrix}
			P' & v  q' \\
			A P'' & A v  q''
		\end{pmatrix} \\
		& =  \det	
		\begin{pmatrix}
			P' &  \mathfrak C_{v}[ P '   ]    \\
			A P'' & A  \left( \mathfrak C_{v}[ P '   ] \right)' - (A'+B) \mathfrak C_{v}[ P '   ]
		\end{pmatrix} +  \det	
		\begin{pmatrix}
			P' &   D  \\
			A P'' & AD'-A'D -BD
		\end{pmatrix} .
	\end{align*}
	Observe that  the weight $v = Aw$ is also semiclassical, with
	$$
	\frac{v'(z)}{v(z)}= \frac{B_1(z)}{ A(z)}, \quad  B_1(z)= A'(z)+B(z), \quad z \in \Delta,
	$$
	so that by \eqref{defTransform2},
	$$
	\begin{pmatrix}
		P' &  \mathfrak C_{v}[ P '   ]    \\
		A P'' & A  \left( \mathfrak C_{v}[ P '   ] \right)' - (A'+B) \mathfrak C_{v}[ P '   ]
	\end{pmatrix} = \mathfrak D_{v}[P'],
	$$
	and we get that
	\begin{equation} \label{expressionforC}
	f_0  =  \mathfrak D_{v}[P'] +  \det	
	\begin{pmatrix}
		P' &  D  \\
		A P'' & AD'-A'D -BD
	\end{pmatrix} :=C.
	\end{equation}
	The first term in the right hand side is the electrostatic partner to $P'$ induced by the semiclassical weight $v$, while the determinant  is clearly a polynomial. Moreover, if $\deg(A)\le 1$, it follows from \eqref{identity1bis1} that $D\equiv 0$, which concludes the proof. 
\end{proof}

\begin{remark}\label{remark:operator}
Incidentally, we have established the following differential identity:
\begin{equation} \label{diffOp1}
\mathcal L[y]:= 	(Av)\,  	\mathfrak{Wrons}[y, P, q] = 	AS y''  +  (A'S -AS' + BS)  y'  + C y  ,
\end{equation}
for some polynomial $C$. We will use this later, in the proof of Theorem~\ref{thmODER}. 

Moreover, formula \eqref{expressionforC} and Remark \ref{remark:scaling} show that equation \eqref{odegeneral} is independent of normalization of $S$: a scaling $S \mapsto \lambda^2 S$, $\lambda\in \C$, can be interpreted as $P\mapsto \lambda P$, and thus, $C\mapsto \lambda^2 C$.
\end{remark}

Using the notions of Section \ref{sec:vortices}, and in particular, characterization \eqref{eq:condEqBis}, we see that  \eqref{odegeneral} yields 
that zeros of $P$ are in electrostatic equilibrium in the external field $\varphi(z)=\Re \Phi(z)$, if
$$
\Phi'(z) := -\frac{1}{2} \, \frac{A'S -AS' + BS}{AS}(z) = - \frac{1}{2}  \left(\frac{A'(z)}{A(z)}+\frac{B(z)}{A(z)}-\frac{S'(z)}{S(z)}\right).
$$
Taking into account the definition \eqref{defW}, we conclude:
\begin{prop}\label{cor:criticalGeneral}
Assume that the polynomial $P$ of degree $N$ does not vanish at the zeros of $AS$, where $S= \mathfrak D_w[P]$ is its electrostatic partner. Then the discrete zero-counting measure $\nu(P)$ of $P$ is $\varphi$-critical for the external field
	\begin{equation} \label{fieldGeneral}
		\varphi(z)=\frac{1}{2}\log \left|  \frac{S}{v } \right|(z) .
	\end{equation}
\end{prop}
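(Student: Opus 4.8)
The plan is to extract the equilibrium conditions directly from the second-order differential equation \eqref{odegeneral}, which $P$ satisfies by Theorem~\ref{CorolarioEDO}, and then to integrate the resulting first-order coefficient to recognize the external field as $\tfrac12\log|S/v|$. The essential analytic input is already the ODE; what remains is to evaluate it along the zeros of $P$ and to match the outcome with the stationarity criterion \eqref{eq:condEqBis} from Section~\ref{sec:vortices}.

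First I would fix a zero $\zeta_j$ of $P$ and evaluate \eqref{odegeneral} there. Since $P(\zeta_j)=0$, the zeroth-order term $C\,P(\zeta_j)$ vanishes, leaving
\begin{equation*}
A(\zeta_j)S(\zeta_j)\,P''(\zeta_j)+\bigl(A'S-AS'+BS\bigr)(\zeta_j)\,P'(\zeta_j)=0.
\end{equation*}
By hypothesis $P$ does not vanish at the zeros of $AS$, so $A(\zeta_j)S(\zeta_j)\neq 0$ and I may divide by it. The same hypothesis, together with uniqueness for the Cauchy problem, also forces every zero of $P$ to be simple: at a point where $AS\neq 0$, equation \eqref{odegeneral} is a regular linear second-order ODE, so $P(\zeta_j)=P'(\zeta_j)=0$ would give $P\equiv 0$, contradicting $\deg P=N$. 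Hence $P'(\zeta_j)\neq 0$, the points $\zeta_j$ are pairwise distinct, and $\nu(P)$ is a bona fide counting measure of distinct charges.

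Dividing the displayed relation by $A(\zeta_j)S(\zeta_j)$ rewrites it as $\bigl(P''-2\Phi'P'\bigr)(\zeta_j)=0$ with
\begin{equation*}
\Phi'(z):=-\frac{1}{2}\,\frac{A'S-AS'+BS}{AS}(z),
\end{equation*}
which is precisely \eqref{eq:condEqBis}, the condition that $\nu(P)$ be $\varphi$-critical for $\varphi=\Re\Phi$. To identify $\varphi$, I would integrate $\Phi'$: since $v=Aw$ and \eqref{ratW} give $v'/v=A'/A+B/A$, one has $\Phi'=\tfrac12\bigl(S'/S-v'/v\bigr)=\tfrac12\bigl(\log(S/v)\bigr)'$, so $\Phi=\tfrac12\log(S/v)$ up to an additive constant and $\varphi=\Re\Phi=\tfrac12\log|S/v|$, which is exactly \eqref{fieldGeneral}.

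I do not expect a genuine obstacle, since the hard work is encapsulated in Theorem~\ref{CorolarioEDO}; the two points needing care are bookkeeping rather than difficulty. One is the consistency check just made, namely that the single hypothesis simultaneously guarantees $P'(\zeta_j)\neq 0$ and $A(\zeta_j)S(\zeta_j)\neq 0$, so that \eqref{eq:condEqBis} applies verbatim. The other is that $\Phi$, although multivalued through $w$, has the single-valued rational derivative $\Phi'$ required by the framework of Section~\ref{sec:vortices}, and that $\varphi=\tfrac12\log|S/v|$ is finite on $\supp\nu(P)$ because the zeros of $P$ avoid the singularities of $\varphi$ — namely the zeros of $A$ and $S$ and the branch points of $w$ — once more by the standing hypothesis.
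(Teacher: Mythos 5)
Your proposal is correct and follows essentially the same route as the paper: evaluate the ODE \eqref{odegeneral} at the zeros of $P$, match with the stationarity criterion \eqref{eq:condEqBis} for $\Phi'=-\tfrac12(A'S-AS'+BS)/(AS)$, and integrate using $v'/v=A'/A+B/A$ to identify $\varphi=\tfrac12\log|S/v|$. Your extra remarks — that the hypothesis forces the zeros of $P$ to be simple via uniqueness for the regular Cauchy problem, and that $\Phi'$ is single-valued — only make explicit what the paper states parenthetically.
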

We can write alternatively that
$$
	\varphi(z)= \frac{1}{2} \left( U^{\nu(A)}(z) - U^{\nu(S)}(z)+ \log \left|  \frac{1}{w } \right|(z) \right).
$$
In other words, the zeros of $P$ (which under assumptions of Proposition~\ref{cor:criticalGeneral} are necessarily simple) are in equilibrium in the external field $\varphi$ induced  by the orthogonality weight $w$, with an additional contribution from point charges of size $1/2$: a \textit{repulsion} from positive charges at $\mathbb A$ and an \textit{attraction} from negative charges at the zeros of the electrostatic partner $S$ (whose location is a priori unknown). The presence of attracting ``ghost charges'' was observed already by M.~H.~Ismail \cite{Ismail2000, Ismail2000c} in his electrostatic interpretation for zeros of orthogonal polynomials with respect to generalized Jacobi weights.

\begin{remark} \label{remark:Heine-Stieltjes}
The study of polynomial solutions of second order ODE of the form \eqref{odegeneral} is a very classical problem that goes back at least two hundred years, see e.g.~\cite{MR2770010, MR2003j:33031, MR2763943} for some historical references and background. In this context, polynomial coefficients $C$ are known as Van Vleck polynomials, and the solution $P$ is the corresponding Heine-Stieltjes polynomial.

The one-to-one correspondence between Heine-Stieltjes polynomials and the discrete critical measures has been established in \cite{MR2770010}. Recently, an additional characterization in terms of non-Hermitian orthogonality satisfied by Heine-Stieltjes polynomials was found in  \cite{BertolaGrava2022}. For the differential equation \eqref{odegeneral}, it follows from their work that there exist a set $\Delta$ and the constants defining the weight $w$ (see the explanation in Section~\ref{sec:generalcase}) such that $P$ satisfies $n+\sigma$ orthogonality conditions with respect to $w/S^2$.  The close connection between orthogonality and electrostatics is well known. In this sense, the electrostatic model in Proposition~\ref{cor:criticalGeneral}, and especially, the form of the external field \eqref{fieldGeneral}, is compatible with the findings of \cite{BertolaGrava2022}.
\end{remark}

\section{Quasi-orthogonality} \label{sec:quasi}

We revisit the facts established in Section~\ref{sec:quasiorth} under an additional assumption on our polynomial $P$.
A (monic) polynomial $P_N$ of degree $N$ is called \textit{quasi-orthogonal}\footnote{A more restrictive notion of quasi-orthogonality was introduced by Chihara \cite{MR86898}, where he assumed a condition equivalent to $n=N-1$; see also \cite{MR3926161}.} with respect to the weight  $w$ on $\Delta$ if it satisfies the following (in general, non-Hermitian) orthogonality relations: for $n\in \N$, $n\le N$,
\begin{equation} \label{orthog1}
\begin{split}
&\int_\Delta x^j P_N(x) w(x)dx=0\,,\qquad j=0,1,\dots, n-1,\\
m_n:=&\int_\Delta x^n P_N(x) w(x)dx\neq 0.
\end{split}
\end{equation}
In particular, if $n=N$,  polynomial $P_N$ is the $N$-th monic  \textit{orthogonal polynomial}. Clearly, last condition in \eqref{orthog1}, that is, $m_n\neq 0$,  is a constraint on the weight $w$ and orthogonality contour $\Delta$.

Using the notation introduced in \eqref{defCauchy}--\eqref{defqN}, we denote
\begin{equation} \label{defQuasi}
	Q_N(z):= \int_\Delta\frac{P_N(t)-P_N(z)}{t-z} w(t)dt
\end{equation}
and
\begin{equation} \label{defQ}
	 	\mathfrak C_w[P_N](z)=\int_\Delta\frac{P_N(t)w(t)}{t-z}dt,  \quad z \in \C \setminus \Delta.
\end{equation}
It is useful to observe that \eqref{orthog1} yields that for any polynomial $T$ of degree $\leq n$,
\begin{equation} \label{eq:CauchTrEq}
	T \mathfrak C_{w}[P_N] = \mathfrak C_{w}[T P_N], \quad \text{that is,} \quad T(z)\, \int_{\Delta} \frac{P_N(t)w(t)}{t-z}dt\,= \int_{\Delta}\frac{T(t) P_N(t) w(t)}{t-z}dt,
\end{equation}
which by \eqref{identityQ} in particular shows that, as $z\to\infty$,
\begin{equation} \label{asymptPN}
	\mathfrak C_w[P_N](z) = P_N(z) \widehat w(z) + Q_N(z )  =-\frac{m_n}{z^{n+1}}+\dots;
\end{equation}
if $\Delta$ is unbounded, we understand the equality above in the asymptotic sense. Notice also that if $P_N$ satisfies a full set of orthogonality conditions ($N=n$), then \eqref{asymptPN} shows that the rational function
$$
\pi_N:=-\frac{Q_N}{P_N}
$$
is the $N$-th diagonal Pad\'e approximant to $\widehat w$ at infinity, fact that is well known. 

We  denote  by $q_N$, $U_N$ and $H_N$ the functions $q$, $U$ and $H$ introduced in \eqref{defqN}, \eqref{defU} and \eqref{DefH}   for $P=P_N$, and let
\begin{equation} \label{main2}
	S_N := \mathfrak D_w[P_N]	 = \det
	\begin{pmatrix} P_N & 	\mathfrak C_w[P_N]    \\A   P_N' & A \left( \mathfrak C_w[P_N]\right)' -B		\mathfrak C_w[P_N] \end{pmatrix} 
\end{equation}
be the electrostatic partner to $P_N$, induced by the weight $w$. 
By Theorem~\ref{lem1aux},
\begin{equation} \label{mainIdentitySN}
S_N=v\,   \mathfrak{Wrons}[P_N, q_N]
 =  \det \begin{pmatrix}
	P_N  & \mathfrak C_w[P_N]   \\
	U_N  &  \mathfrak C_w[U_N]  + H_N
\end{pmatrix}  ,
\end{equation}
and $S_N$ is a polynomial of degree $\le N+\sigma$.  In fact, due to quasi-orthogonality relations we can say more: the upper bound on the degree of $S_N$ is lessened by the number of orthogonality conditions: 
\begin{cor}\label{lemMain}
If $P_N$ satisfies \eqref{orthog1} then for the electrostatic partner $S_N$, 
\begin{equation} \label{leadingC}
	S_N(z)= m_n \, z^{N-n} \left[  (N+n+1) \, z^{ \deg(A)-2}  \left(1 + O\left( 1/z \right)  \right)+    \kappa \, z^{ \deg(B)-1} \left(1+ O\left( 1/z \right)  \right)\right], \quad z\to \infty,
\end{equation}
where $\kappa$ is the leading coefficient of the polynomial $B$. In particular, with assumptions \eqref{orthog1}, $\deg(S_N)\le N-n+\sigma$ and can be strictly less only if $\deg(A)=\deg(B)+1$ and $\kappa =-(N+n+1)$.

Moreover,
if $n\ge \sigma +1$ then $H_N\equiv 0$, and if $n\ge \sigma +2$, then additionally,  $U_N $ (of degree $\le N-1$) is  quasi-orthogonal:
\begin{equation} \label{quasiorthSnBis}
	\int_\Delta x^j U_{N}(x)w(x)dx=0, \quad j=0,\dots, n-\sigma-2.
\end{equation}
\end{cor}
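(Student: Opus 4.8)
The plan is to dispatch the three assertions in turn, all of them driven by the single extra ingredient that quasi-orthogonality supplies over the general setting, namely the improved decay
\begin{equation*}
\mathfrak C_w[P_N](z) = -\frac{m_n}{z^{n+1}}\left(1 + O\!\left(1/z\right)\right), \qquad z\to\infty,
\end{equation*}
recorded in \eqref{asymptPN}, which encodes the vanishing of the first $n$ moments.

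For the asymptotic formula \eqref{leadingC} I would expand the $2\times 2$ determinant \eqref{main2} as
\begin{equation*}
S_N = A\bigl(P_N\,\mathfrak C_w[P_N]' - P_N'\,\mathfrak C_w[P_N]\bigr) - B\,P_N\,\mathfrak C_w[P_N],
\end{equation*}
and substitute the monic expansions $P_N(z)=z^N+\cdots$, $P_N'(z)=Nz^{N-1}+\cdots$ together with \eqref{asymptPN} and its term-by-term derivative (legitimate since $\mathfrak C_w[P_N]$ is holomorphic at $\infty$ on $\C\setminus\Delta$). The Wronskian-type combination $P_N\,\mathfrak C_w[P_N]'-P_N'\,\mathfrak C_w[P_N]$ has leading term $(N+n+1)m_n z^{N-n-2}$, so multiplying by $A$ gives the summand $(N+n+1)m_n z^{N-n}z^{\deg(A)-2}$, while $-B\,P_N\,\mathfrak C_w[P_N]$ has leading term $\kappa\, m_n z^{N-n}z^{\deg(B)-1}$; together these are exactly \eqref{leadingC}. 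The bound $\deg(S_N)\le N-n+\sigma$ is then immediate from \eqref{def:class}, and a strict drop can only happen when the two exponents coincide, i.e. $\deg(A)=\deg(B)+1$, and the coefficients cancel, $(N+n+1)+\kappa=0$.

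For $H_N\equiv 0$ I would argue directly from \eqref{DefH} and \eqref{identity1bis1} by a degree count. The second-order divided difference $\bigl(A(z)-A(x)+A'(x)(x-z)\bigr)/(x-z)^2$ is, for fixed $z$, a polynomial in $x$ of degree $\le\deg(A)-2$, and the first-order divided difference $\bigl((E+B)(x)-(E+B)(z)\bigr)/(x-z)$ is a polynomial in $x$ of degree $\le\deg(E+B)-1\le\sigma$ (using $\deg(E)\le\deg(A)-1$ and $\deg(B)\le\sigma+1$). Hence both integrands are, in the variable $x$, polynomial combinations of $x^jP_N(x)w(x)$ with $j\le\sigma$; as soon as $n\ge\sigma+1$ every such moment vanishes by \eqref{orthog1}, so $D\equiv 0$ and the remaining integral vanishes, giving $H_N\equiv 0$. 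Equivalently, one may note that $H_N$ is the polynomial part at $\infty$ of $A\,\mathfrak C_w[P_N]'-(B+E)\mathfrak C_w[P_N]-\mathfrak C_w[U_N]$, which under the improved decay above is $O(z^{\sigma-n})$ and hence vanishes identically when $n\ge\sigma+1$.

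Finally, for the quasi-orthogonality of $U_N$ I would exploit the identity $\mathfrak C_w[U_N]=A\,\mathfrak C_w[P_N]'-(B+E)\mathfrak C_w[P_N]-H_N$ from the proof of Theorem~\ref{lem1aux}; since $n\ge\sigma+2\ge\sigma+1$ forces $H_N\equiv0$, this reads $\mathfrak C_w[U_N]=A\,\mathfrak C_w[P_N]'-(B+E)\mathfrak C_w[P_N]$. Inserting \eqref{asymptPN} and bounding degrees shows both terms are $O(z^{\sigma-n})$, so $\mathfrak C_w[U_N](z)=O(z^{\sigma-n})$. Matching this against the moment expansion $\mathfrak C_w[U_N](z)=-\sum_{j\ge 0}z^{-(j+1)}\int_\Delta t^jU_N(t)w(t)\,dt$ forces $\int_\Delta x^jU_N(x)w(x)\,dx=0$ for all $j\le n-\sigma-2$, which is \eqref{quasiorthSnBis}; the hypothesis $n\ge\sigma+2$ is precisely what makes this range of indices nonempty. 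The main obstacle throughout is the degree bookkeeping around $\sigma=\max\{\deg(A)-2,\deg(B)-1\}$: the contributions of $\deg(A)$ and $\deg(B)$ must be kept separated because they can be of the same order (exactly when $\deg(A)=\deg(B)+1$), and it is in that coincident case that the leading coefficients may cancel and lower $\deg(S_N)$ below $N-n+\sigma$.
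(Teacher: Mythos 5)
Your proof is correct and follows essentially the same route as the paper: substituting the decay $\mathfrak C_w[P_N](z)=-m_nz^{-n-1}(1+O(1/z))$ from \eqref{asymptPN} into the determinant \eqref{main2} for the leading behavior, and reading off $H_N\equiv 0$ and the quasi-orthogonality of $U_N$ from the $O(z^{\sigma-n})$ estimate for $A(\mathfrak C_w[P_N])'-(B+E)\mathfrak C_w[P_N]=\mathfrak C_w[U_N]+H_N$ in \eqref{identityUhat}, matched against the moment expansion of $\mathfrak C_w[U_N]$. Your additional direct argument for $H_N\equiv 0$ via the degree count on the divided differences in \eqref{identity1bis1} and \eqref{DefH} is a valid, slightly more elementary alternative to the asymptotic one the paper uses, but it does not change the substance of the proof.
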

\begin{proof}
Using \eqref{asymptPN} in the definition \eqref{main2} we obtain \eqref{leadingC}.  Also from \eqref{asymptPN}, \eqref{identityUhat} and since $\deg(B+E)\leq \sigma +1$, we see that
\begin{equation} \label{asymptU}
	\mathfrak C_w[U_N] 	 (z) +H(z) = A(z) (\mathfrak C_w[P_N] )'(z)  - (B+E)(z) \mathfrak C_w[P_N] (z)  =  \mathcal O\left( \frac{1}{z^{n-\sigma} }\right) , \quad z \to \infty.
\end{equation}
If $n\ge \sigma +1$, \eqref{asymptU} implies  that $H\equiv 0$; with the assumption $n\ge \sigma +2$  we also get  the quasi-orthogonality conditions \eqref{quasiorthSnBis}.
\end{proof}

\begin{remark} \label{rem:OP}
It is interesting to examine the conclusions of Corollary \ref{lemMain} in the particular case of  polynomials $P_N$ orthogonal on $\Delta$ with respect to the weight $w$ (so that $n=N$). Consequently, the degree of $S_N$ is uniformly bounded: $\deg S_N\leq \sigma$. Furthermore, $\deg S_N=\sigma$ if either
$$
\deg(A)\neq \deg(B)+1 \quad \text{or} \quad \kappa \neq -(N+n+1),
$$
see \eqref{leadingC}. 
If additionally $\sigma=0$ (i. e. when $P_N$ is basically a classical orthogonal polynomial), \eqref{quasiorthSnBis} asserts that $U_{N}$ is the $(N-1)$-th orthogonal polynomial, and up to normalization, $U_{N}=P_{N-1}$, in which case \eqref{mainIdentitySN} boils down to
$$
S_N=	V  \det \begin{pmatrix}
	P_N & q_N \\
	P'_{N } & q_N'
\end{pmatrix} =  \const \times \det \begin{pmatrix}
	P_N & \mathfrak C_w[P_N]   \\
	P_{N-1} &  \mathfrak C_w[P_{N-1}]  
\end{pmatrix},
$$
that is, polynomial $S_N$ is the product of a factor related with the weight and the Wronskian of $P_N$ and $q_N$, and also is the Casorati determinant associated with $P_N$; such an identity appears for instance in \cite[formula (3.6.13)]{Ismail05}. Thus, \eqref{mainIdentitySN}  is a generalization of such kind of relations to quasi-orthogonal polynomials with respect to semiclassical weights, a fact that has an independent interest.
\end{remark}

With the additional assumption of quasi-orthogonality of $P_N$,  Theorem \ref{CorolarioEDO} and Proposition \ref{cor:criticalGeneral} are still valid, with $S$ replaced by $S_N$. For instance, $P_N$ satisfying \eqref{orthog1} and its function of the second kind $q_N$ are  solutions of the  linear differential equation with polynomial coefficients  
	\begin{equation} \label{ode1}
	AS_N y''+(A'S_N -AS'_N+BS_N )y'+C_N y=0,
	\end{equation}
where $S_N$ is the electrostatic partner defined in \eqref{main2}, and $C_N$ is a polynomial.  Its degree  can be easily estimated by using in  \eqref{ode1} that $\deg P_N=N$ and $\deg(S_N) \le N-n+\sigma$ , which yields that
\begin{equation} \label{degSN}
	\deg(C_N) \leq N-n+2\sigma.
\end{equation}
In some cases, we can be more precise. For instance, if $ \deg (A) \leq   \deg(B)  $ and $\deg(P_N)=N$ then   using \eqref{leadingC} and \eqref{ode1} we conclude that 
$$
\deg (S_N)=N-n+\deg(B)-1, \quad \deg(C_N) = N-n+2\deg(B)-2.
$$
 
Equation \eqref{ode1} is known in the literature on semiclassical orthogonal polynomials, see for instance \cite[Eq.~(20)]{Shohat} or  \cite[Eq.~(18)]{Magnus}. The equation in \cite{Magnus} was obtained for orthogonal polynomials only, when the polynomial $S_N$ (denoted by $\Theta_n$ there) has degree $\le \sigma$. In this sense,  \eqref{ode1} is an extension of these results.  Recall that Magnus uses in \cite{Magnus} an alternative definition for the semiclassical weights, in terms of an identity for the Cauchy transform  \eqref{defCauchy}, which is equivalent to \eqref{ratW}, see Remark~\ref{rem1}.

\begin{example}\label{ex:Jacobi1}
Let us return to Jacobi polynomials 
\begin{equation} \label{JacPolExplicit}
P_N(x)=P_N^{(\alpha, \beta)}(x)=\frac{1}{2^{N}\, N ! }(x-1)^{-\alpha}(x+1)^{-\beta}\left[(x-1)^{\alpha+N}(x+1)^{\beta+N}\right]^{(N)},
\end{equation}
corresponding to the weight considered in Example~\ref{exampleJacobi1}, for which  
$$
A(x)=x^2-1, \quad B(x)=(\alpha+\beta) x +  \alpha-\beta,
$$
and $\sigma=0$.  
It is known that $P_{N}^{\left( \alpha ,\beta \right) }$ may have a multiple zero at
$x=1$ if $\alpha \in \{-1,\ldots,-N\}$, at $x=-1$ if $\beta \in
\{ -1,\ldots,-N\} $ or, even, at $x=\infty $ (which means a degree
reduction) if $N+\alpha +\beta \in \{-1, \ldots,-N\}$; otherwise, all zeros are simple, see e.g.~\cite{ETNA05, Szego75}.

If $ \alpha,  \beta >-1$, we can take $\Delta=[-1,1]$. 
By Corollary~\ref{lemMain},  the electrostatic partner $S_N$ is a constant ($\not \equiv 0$ if $\alpha + \beta \neq -2N-1$). Since
$$
v(z)=(z-1)^{\alpha+1} (z+1)^{\beta+1 }, 
$$
the zeros of $P_N$ are in equilibrium in the external field
\begin{equation} \label{extFieldJacobi}
\varphi(z)=\frac{1}{2}\log \left|  \frac{1}{(z-1)^{\alpha+1} (z+1)^{\beta+1 } } \right|=\frac{\alpha+1}{2} U^{\delta_1}(z)+\frac{\beta+1}{2} U^{\delta_{-1}}(z).
\end{equation}

In other cases, considered non-standard, when either $\alpha \leq -1$ or $\beta\le -1$, Jacobi polynomials satisfy  non-hermitian quasi-orthogonality conditions, see \cite[Theorem 4.1]{ETNA05}. 

For instance, if $\alpha, \beta, \alpha + \beta \notin \mathbb Z$, and $ -N<\alpha < -1$, then all zeros of $P_N=P_{N}^{\left( \alpha ,\beta \right) }$ are simple, and $P_N(-1)\neq 0$, see e.g.~\cite[Ch.~IV]{Szego75}. Polynomial $P_N$ satisfies also a quasi-orthogonality relation \eqref{orthog1} with $n=N-[-\alpha]$, but with a modified weight, $w(z)=(z-1)^{\alpha+[-\alpha]}(z+1)^\beta$, so that now
$$
v(z)=(z-1)^{\alpha+[-\alpha]+1} (z+1)^{\beta+1 }.
$$
Moreover, as  $\Delta$ we can take an arbitrary curve oriented clockwise, connecting $1 - i 0$ with $1 + i 0$ and lying entirely in $\C \setminus [-1, +\infty)$, except for its
endpoints; if $\beta>-1$, then $\Delta$ can be deformed into $[-1,1]$.
 
By Corollary~\ref{lemMain}, the electrostatic partner $S_N$ is of degree exactly $ [-\alpha]$. We will show next that, up to a normalizing constant,
\begin{equation} \label{expressionS_NJac}
S_N(x)=(x-1)^{[-\alpha]}.
\end{equation}
However, notice that by Proposition~\ref{cor:criticalGeneral}, the discrete zero-counting measure $\nu(P_N)$ of $P_N=P_{N}^{\left( \alpha ,\beta \right) }$ is $\varphi_N$-critical in the external field
$$
	\varphi(z)=\frac{1}{2}\log \left|  \frac{S_N}{v } \right|(z) ,
$$
which coincides with the one given in \eqref{extFieldJacobi}. In other words, even with the non-standard values of the parameters $\alpha, \beta$, we still get equilibrium in the field \eqref{extFieldJacobi}.

Returning to the expression for $S_N$ in the case under consideration, it is known that for all values of the parameters, $P_N$ is a solution of the differential equation 
$$
	Ay''+(B+A')y'-\lambda_N y=0, \quad \lambda_N= N(N+\alpha+\beta+1),
$$
with $A$ and $B$ given in \eqref{ABJacobi}, see e.g.~\cite[Ch.~IV]{Szego75} or \cite[Section 18.8]{MR2723248}. At the same time,   from our discussion it follows that $P_N$ is a solution of the differential equation \eqref{ode1}, namely
$$
AS_N y''+(A'S_N -AS'_N+B_1S_N )y'+C_N y=0, \quad B_1(x)=(\alpha+[-\alpha]+\beta) x +  \alpha+[-\alpha]-\beta.
$$
Combining these two equations  we obtain the identity
$$
\left((B-B_1)S_N + AS_N'\right)\,P_N' = (\lambda_N S_N + C_N)\,P_N,
$$ 
which using the explicit expressions for $A$, $B$ and $B_1$, can be rewritten as
\begin{equation}\label{id1S1}
	(x+1) \left((x-1) S'_N(x) - [-\alpha] S_N(x)\right) P'_N(x) = (\lambda_N S_N(x) + C_N(x)) P_N(x).
\end{equation}
Recall that $\deg S_N=[-\alpha] \le N$; a simple argument shows that 
$$
\deg \left((x-1) S'_N(x) - [-\alpha] S_N(x)\right)<N. 
$$
Indeed, the assertion is obvious for $\deg S_N< N$; if $\deg S_N=[-\alpha] = N$ then the leading coefficients of $(x-1) S'_N(x)$ and $[-\alpha] S_N(x)$ match, so the assertion follows also in this case. 

Since in the situation we are analyzing $P_N(x)$ and $(x+1)P'_N(x)$ are relatively prime (up to a multiplicative constant), by \eqref{id1S1} we conclude that $(x-1) S'_N(x) - [-\alpha] S_N(x)$ must vanish at all $N$ distinct zeros of $P_N$, which is possible only if $(x-1) S'_N(x) - [-\alpha] S_N(x)\equiv 0$, which implies \eqref{expressionS_NJac}. Incidentally, we also obtain that in this case, $C_N=-\lambda_N S_N  $.

We will return to the example of Jacobi polynomials with non-standard values of the parameters  in the Section~\ref{sec:complex}, when we will address multiple orthogonality.

\end{example}

\begin{example}\label{ex:JacobiQuasi}
	It is instructive to compare our construction with the results of \cite{MR3926161} in the case of quasi-orthogonal Jacobi polynomials. These are polynomials
$$
P_N(x)=\widehat P_N(x) + c \, \widehat P_{N-1}(x), \quad c \in \R,
$$ 
where $\widehat P_N$ is the $N$-th orthonormal Jacobi polynomial 
$$
\widehat P_N(x) =\sqrt{\frac{(\alpha+\beta+2 N+1) \Gamma(\alpha+\beta+N+1) N ! }{2^{\alpha+\beta+1} \Gamma(\alpha+N+1) \Gamma(\beta+N+1)}}\, P_N^{(\alpha, \beta)}(x)
$$
and $P_N^{(\alpha, \beta)}$ is defined in \eqref{JacPolExplicit}. Obviously, for $\alpha, \beta>-1$, $P_N$ satisfies \eqref{orthog1} with $n=N- 1$ and the weight $w(x)=(x-1)^{\alpha } (x+1)^{\beta }$. According to Corollary~\ref{lemMain},  $S_N(x)=x- t_N$ (up to a multiplicative constant), and by Proposition~\ref{cor:criticalGeneral},  the discrete zero-counting measure $\nu(P_N)$ of $P_N$ is $\varphi$-critical in the external field
\begin{align*}
	\varphi(x) & =\frac{\alpha+1}{2}\,  U^{\delta_1}(x)+\frac{\beta+1}{2}\,  U^{\delta_{-1}}(x)- \frac{ 1}{2} U^{\delta_{t_N}}(x) \\
	& =\frac{\alpha+1}{2}\,  \log \frac{1}{|x-1|} +\frac{\beta+1}{2}\,   \log \frac{1}{|x+1|} - \frac{ 1}{2}  \log \frac{1}{|x-t_N|} .
\end{align*}
Explicit expressions allow to calculate $t_N$ by definition \eqref{mainIdentitySN}:
$$
t_{N}=-\frac{(\alpha+\beta+1+2 N)+c ^{2}(\alpha+\beta-1+2 N)}{(2 N+\alpha+\beta) c } a_{N} + \frac{\beta^{2}-\alpha^{2}}{(2 N+\alpha+\beta)^{2}},
$$
where
$$
a_{N}=\frac{2}{\alpha+\beta+2 N} \sqrt{\frac{N (\alpha+N)(\beta+N)(\alpha+\beta+N)}{(\alpha+\beta-1+2 N)(\alpha+\beta+1+2 N)}}\, .
$$
This expression coincides with the one obtained in \cite[Section 5.2]{MR3926161}. 
\end{example}

\section{Multiple orthogonality of Type II}\label{sec:multiple_orth}

\subsection{The general case} \label{sec:generalHermitePade}
\

We are interested in type II \textit{multiple} or \textit{Hermite-Pad\'e} orthogonal polynomials with respect to semiclassical weights. For the sake of simplicity, we restrict ourselves  to two positive weights $w_1$, $w_2$, supported on $\Delta_1\subset \R$ and $\Delta_2\subset \R$, respectively. Given an ordered pair $\bm n= (n_1,n_2)\in \Z_{\ge 0}^2$,  where $\Z_{\ge 0}=\N\cup \{0\}$, we look for a (monic) polynomial $P_{\bm n}$ of total degree at most $N=|\bm n|:=n_1+n_2$, such that
\begin{equation} \label{defHP}
	\int_{\Delta_i}x^jP_{\bm n}(x)w_i(x)dx  \begin{cases}
		=0,&\quad j\leq n_i-1,\\
		\neq 0,&\quad j=n_i,
	\end{cases}
	\qquad i=1, 2.
\end{equation}
In this way, the definition of type II Hermite-Pad\'e orthogonal polynomials \eqref{defHP} boils down to two simultaneous quasi-orthogonality conditions.

Additionally, we assume that both weights are semiclassical and belong to the framework discussed in Section~\ref{sec:quasiorth}. More precisely, we assume that 
for each $i=1, 2$, $\Delta_i\subset \R$ is a finite union of non-overlapping intervals joining real zeros of a real polynomial $A_i$ and eventually $\infty$, and that each weight $w_i$ is defined on $\Delta_i$ in such a way that on each component  $\Gamma_j$ it coincides, up to a non-zero multiplicative constant, with a boundary value $(w_i)_+$ of the function defined below:
\begin{equation} \label{defWHP}
	w_i(z):=\exp \left( \int ^z \frac{B_i(t)}{  A_i(t)}\, dt \right) , \quad  \quad i=1, 2,
\end{equation}
for some real polynomials $B_1, B_2$. As before, we  use the notation $\mathbb A_i$ for the set of zeros of $A_i$ on $\C$, $i=1,2$.

We assume also that $w_i$'s have finite moments:
$$
\int_{\Delta_i} |x|^m |w_i(x)| dx <+\infty, \quad m=0, 1, 2, \dots, \quad i=1, 2.
$$
As a consequence, for $m=0, 1, 2, \dots$, 
\begin{equation}\label{endpoints2}
z^m	v_i(z)=0 \text{ at endpoints of every  subinterval of $\Delta_i$, $i=1, 2$,}
\end{equation}
with
\begin{equation} \label{defWHPv}
	v_i(z):= A_i(z) w_i (z), \quad i=1, 2.
\end{equation}
We also have that
\begin{equation} \label{ratWHP}
	\frac{w_i'(z)}{w_i (z)}=\frac{B_i(z)}{ A_i(z)}, \quad i=1, 2,
\end{equation}
with the identity taking place away from the singularities of $w_i$.  
As in \eqref{def:class},
\begin{equation} \label{def:classHP}
	\sigma_i := \max \{\deg(A_i)-2, \deg(B_i)-1 \},\quad i=1, 2.
\end{equation}

\begin{remark} \label{remark:A1=A2}
In the special case when $A_1=A_2=A$ and $B_1=B_2=B$, we will always assume, without loss of generality, that both $w_i$ are normalized in such a way that for every selection of the branch,
	$$
	w_1(z)=w_2(z)=w(z)=\exp \left( \int ^z \frac{B(t)}{ A(t)}\, dt \right) .
	$$
This  situation was considered for instance in \cite{Aptekarev:97} (and previously, in  \cite{kaliaguine:1981} and \cite{kaliaguine:1996}). Several examples of the case when $\Delta_1=\Delta_2$,  $A_1=A_2$, but $B_1\neq B_2$ for classical weights  ($\sigma_i=0$) appear in  \cite{AptBraVA}.
\end{remark}

For $P_{\bm n}$ we  define the corresponding polynomials
\begin{equation} \label{defQHP}
Q_{{\bm n},i}(z):= \int_{\Delta_i}\frac{P_{\bm n}(t)-P_{\bm n}(z)}{t-z} w_i(t)dt, \quad i=1, 2,
\end{equation}
and functions of the second kind,
\begin{equation}\label{2ndkindfunt}
	q_{\bm n,i}(z):=\frac{\mathfrak C_{w_i}[P_{\bm n}](z) }{w_i (z)}= \frac{1}{ w_i (z)}\int_\Delta\frac{P_{\bm n}(t)w_i(t)}{t-z}dt , \quad i=1, 2.
\end{equation}
By \eqref{eq:CauchTrEq}--\eqref{asymptPN},  for $i=1, 2$, 
\begin{equation} \label{eq:asymptCauchy}
	 \mathfrak C_{w_i}[P_{\bm n}](z) = P_{\bm n} (z) \widehat w_i(z) + Q_{{\bm n},i}(z ) = \mathcal O(z^{-n_i-1}), \quad z\to \infty. 
\end{equation}
This shows that the pair of rational functions
	\begin{equation} \label{def_approx}
\left( \pi_{{\bm n},1}, \pi_{{\bm n},2}\right):=\left( -\frac{Q_{{\bm n},1}}{P_{\bm n}}, - \frac{Q_{{\bm n},2}}{P_{\bm n}}\right)
\end{equation}
are the \textit{simultaneous} or \textit{Hermite--Pad\'e approximants of type II} to the pair of functions $\left(  \widehat w_1 ,  \widehat  w_2 \right) $, and that the functions $\mathfrak C_{w_i}[P_{\bm n}]$ are the corresponding \textit{residues}, see  e.g.~\cite{Aptekarev:97, MR4238535}.

We will impose an additional condition enforcing an independence of both weights, namely we will assume that
\begin{equation} \label{mainconditionwronks}
\mathfrak{Wrons}[P_{\bm n}, q_{\bm n,1}, q_{\bm n,2}]  \not\equiv 0.
\end{equation}
It is equivalent to assuming that $\mathfrak{Wrons}[P_{\bm n}, q_{\bm n,1}, q_{\bm n,2}]\neq 0$ at some point different from the zeros of $A_1A_2$.  Possibly, condition \eqref{mainconditionwronks} is equivalent to $\bm n$ being a normal index, although this is just a natural conjecture that deserves further study.
 
By the analysis from Section \ref{sec:quasiorth}, $P_{\bm n}$ has now two electrostatic partners, 
	\begin{equation} \label{defS12}
		S_{\bm n, i}(z) := \mathfrak D_{w_i}[P_{\bm n}]= v_i(z)  	f_{\bm n,i} (z), \quad i=1, 2,
\end{equation}
where 
\begin{equation} \label{wronskF}
	f_{\bm n,i}:= \mathfrak{Wrons}[P_{\bm n}, q_{\bm n, i}], \quad i=1, 2,
\end{equation}
and $\mathfrak{Wrons}[\cdot, \cdot]$ stands for the Wronskian as defined in \eqref{eq:Wronskian}. An equivalent formula that might shed some light on the behavior of these polynomials is
	\begin{equation} \label{S_inew}
		S_{\bm n, i}(z) = v(z) P_{\bm n}^2(z) \,  \left(\frac{\widehat w_i(z)-\pi_{\bm n,i}(z)}{w_i(z)}  \right)' = v(z) P_{\bm n}^2(z) \,  \left(\frac{\mathfrak C_{w_i}[P_{\bm n}](z)}{w_i(z)P_{\bm n}(z)} \right)'   , \quad i=1, 2,
\end{equation}
where $\pi_{\bm n,i}$ are defined in \eqref{def_approx}; it can be checked using \eqref{eq:asymptCauchy} by direct computation. 

By Theorem~\ref{CorolarioEDO},
\begin{thm}\label{TeoClaveHP}
	Let $\bm n= (n_1,n_2)\in \Z_{\ge 0}^2$, and let $P_{\bm n}$ be  a monic polynomial  of degree $N=n_1+n_2$ (assuming it exists) that satisfies the multiple orthogonality conditions \eqref{defHP}.  
	Then $S_{\bm n, i}$  are polynomials   of degree at most $N-n_i+\sigma_i $, $i=1, 2$, and there exist polynomials $C_{\bm n, i} $, 
	$$
	\deg(C_{\bm n, i}) \leq N-n_i+2\sigma_i , \quad i=1, 2 , 
	$$  
	 such that   $P_{\bm n}$ is a solution of the system of linear differential equations
	\begin{equation} \label{odeHP}
		\begin{split}
			& A_i S_{\bm n, i}\, y''+(A'_i S_{\bm n, i}-A_i S_{\bm n, i}'+B_iS_{\bm n, i})\, y'+C_{\bm n, i} \,  y=0,  \quad i=1, 2.
		\end{split}
	\end{equation}
\end{thm}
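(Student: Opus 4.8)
The plan is to exploit the fact, already emphasized in the text preceding the statement, that the defining relations \eqref{defHP} of the type II Hermite--Pad\'e polynomial $P_{\bm n}$ decouple into two independent quasi-orthogonality conditions, and then to apply the single-weight machinery of Sections~\ref{sec:quasiorth} and \ref{sec:quasi} once for each weight. First I would fix $i\in\{1,2\}$ and read \eqref{defHP} for that index alone: the relations
$$
\int_{\Delta_i}x^jP_{\bm n}(x)w_i(x)\,dx=0,\quad j\le n_i-1,\qquad \int_{\Delta_i}x^{n_i}P_{\bm n}(x)w_i(x)\,dx\neq 0,
$$
are exactly the quasi-orthogonality relations \eqref{orthog1} for the polynomial $P_{\bm n}$ of degree $N$ with respect to the semiclassical weight $w_i$ on $\Delta_i$, with orthogonality parameter $n=n_i$ and class $\sigma=\sigma_i$. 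The semiclassical character of each $w_i$ is guaranteed by \eqref{defWHP}--\eqref{ratWHP} and \eqref{def:classHP}. Thus, for each $i$ separately, $P_{\bm n}$ lies within the scope of Corollary~\ref{lemMain} and Theorem~\ref{CorolarioEDO}, with the data $(A,B,w,\sigma,n)$ replaced by $(A_i,B_i,w_i,\sigma_i,n_i)$.

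With this reduction in hand, I would invoke Corollary~\ref{lemMain} for each $i$: it immediately gives that the electrostatic partner $S_{\bm n,i}=\mathfrak D_{w_i}[P_{\bm n}]$ from \eqref{defS12} is a polynomial of degree at most $N-n_i+\sigma_i$, which is the first assertion. Then I would apply Theorem~\ref{CorolarioEDO}, equivalently its quasi-orthogonal form \eqref{ode1}, with $P=P_{\bm n}$ and $S=S_{\bm n,i}$; it produces a polynomial coefficient $C_{\bm n,i}$ for which $P_{\bm n}$ (together with the function of the second kind $q_{\bm n,i}$) solves
$$
A_iS_{\bm n,i}\,y''+(A_i'S_{\bm n,i}-A_iS_{\bm n,i}'+B_iS_{\bm n,i})\,y'+C_{\bm n,i}\,y=0,
$$
which is precisely the $i$-th equation of the system \eqref{odeHP}. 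Running this argument for both $i=1,2$ assembles the full system.

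For the degree of $C_{\bm n,i}$ I would argue exactly as in the derivation of \eqref{degSN}: rewriting the displayed equation as $C_{\bm n,i}P_{\bm n}=-(A_iS_{\bm n,i}\,P_{\bm n}''+(A_i'S_{\bm n,i}-A_iS_{\bm n,i}'+B_iS_{\bm n,i})P_{\bm n}')$ and comparing degrees, using $\deg P_{\bm n}=N$, the bound $\deg S_{\bm n,i}\le N-n_i+\sigma_i$ just obtained, and the inequalities $\deg A_i-1\le\sigma_i+1$ and $\deg B_i\le\sigma_i+1$ that follow from \eqref{def:classHP}, one obtains $\deg C_{\bm n,i}\le N-n_i+2\sigma_i$.

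I do not expect a genuine obstacle in this proof: its entire content is the observation that the multiple orthogonality conditions split weight-by-weight, after which the statement is a transcription of the single-weight results \eqref{ode1} and \eqref{degSN} applied twice. The only point needing a line of care is verifying that the hypotheses of Corollary~\ref{lemMain} and Theorem~\ref{CorolarioEDO} truly hold for each index $i$---namely that $w_i$ is semiclassical and that $P_{\bm n}$ is quasi-orthogonal against it with parameter $n_i$---and both are immediate from reading \eqref{defHP} one index at a time.
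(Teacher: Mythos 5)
Your proposal is correct and follows exactly the paper's route: the paper also obtains Theorem~\ref{TeoClaveHP} by reading \eqref{defHP} as two separate quasi-orthogonality conditions and applying Theorem~\ref{CorolarioEDO} together with the degree bounds from Corollary~\ref{lemMain} and the comparison argument of \eqref{degSN} once for each weight. No gaps.
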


\begin{remark}\label{rem:linearcomb}
In the case when $A_1=A_2$ and $B_1=B_2$ we can replace in the expressions \eqref{odeHP} the polynomial $S_{\bm n, i}$ by any linear combination
$$
a \, S_{\bm n, 1} + b \, S_{\bm n, 2}, \quad a, b \in \R.
$$
The resulting differential equations still have $P_{\bm n}$ as one of their solutions; see the result of numerical experiments for Appell's polynomials at the end of Section~\ref{sec:AngelescoJacobi}, especially the plots in Figure~\ref{Fig2Appell}.
\end{remark}

An application of  Proposition \ref{cor:criticalGeneral} yields:
\begin{cor}\label{cor:HPinterp}
		Assume that the polynomial $P_{\bm n}$ has no common zeros neither  with $A_1 S_{\bm n,1}$ nor with $A_2 S_{\bm n,2}$.
	Then  the discrete zero-counting measure $\nu(P_{\bm n})$ of $P_{\bm n}$ is $\varphi_i$-critical, in the sense of Definition~\ref{defCriticalScalar}, for the external field
	\begin{equation} \label{fieldHP}
		\varphi_i(z)=\frac{1}{2}\log \left|  \frac{S_{\bm n,i}}{v_i    } \right|(z) , 
	\end{equation}
for both $i=1, 2$. 
\end{cor}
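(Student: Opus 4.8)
The plan is to obtain Corollary~\ref{cor:HPinterp} as a direct consequence of the scalar result Proposition~\ref{cor:criticalGeneral}, applied separately for each index $i=1,2$. The crucial point is that, although $P_{\bm n}$ is pinned down by the \emph{coupled} multiple orthogonality conditions \eqref{defHP}, the system \eqref{odeHP} produced by Theorem~\ref{TeoClaveHP} splits into two \emph{autonomous} second-order ODEs, each of which has exactly the form \eqref{odegeneral} analyzed in Section~\ref{sec:theODE}. This means the single-weight electrostatic analysis can be read off independently along each component.

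First I would fix $i\in\{1,2\}$ and isolate the $i$-th equation of \eqref{odeHP},
$$
A_i S_{\bm n, i}\, y''+(A'_i S_{\bm n, i}-A_i S_{\bm n, i}'+B_iS_{\bm n, i})\, y'+C_{\bm n, i}\, y=0,
$$
recognizing it as \eqref{odegeneral} under the substitutions $A\mapsto A_i$, $B\mapsto B_i$, $w\mapsto w_i$, $v\mapsto v_i$, and $S\mapsto S_{\bm n, i}=\mathfrak D_{w_i}[P_{\bm n}]$; by \eqref{defS12} this $S_{\bm n, i}$ is precisely the electrostatic partner of $P_{\bm n}$ induced by $w_i$. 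Since $P_{\bm n}$ is a solution, the equilibrium characterization \eqref{eq:condEqBis} applies verbatim with $\Phi_i'(z)=-\tfrac12\,(A'_i S_{\bm n,i}-A_i S_{\bm n,i}'+B_i S_{\bm n,i})/(A_i S_{\bm n,i})(z)$.

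Next I would check that the hypotheses of Proposition~\ref{cor:criticalGeneral} hold for each $i$: the assumption that $P_{\bm n}$ has no common zero with $A_i S_{\bm n,i}$ is exactly the requirement that $P_{\bm n}$ not vanish at the zeros of $A_iS_{\bm n,i}$, which guarantees that the leading coefficient $A_iS_{\bm n,i}$ is nonzero at the (necessarily simple) zeros of $P_{\bm n}$ and that those zeros avoid the poles of $\Phi_i'$. Evaluating $\Phi_i'$ via \eqref{defWHPv} and integrating, exactly as in the passage preceding Proposition~\ref{cor:criticalGeneral}, identifies the external field as $\varphi_i(z)=\Re\Phi_i(z)=\tfrac12\log|S_{\bm n,i}/v_i|(z)$, which is \eqref{fieldHP}. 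Running this for $i=1$ and $i=2$ yields the two asserted criticality statements.

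I do not anticipate a genuine technical obstacle, since the statement is essentially a restatement of the scalar Proposition~\ref{cor:criticalGeneral}; the only point demanding care is conceptual rather than computational, namely justifying that the coupled definition of $P_{\bm n}$ still permits the two scalar electrostatic models to be extracted independently. This is legitimate precisely because each line of the system \eqref{odeHP} is a self-contained ODE of the form \eqref{odegeneral}, so the criticality analysis of Section~\ref{sec:theODE} transfers to each component with no modification.
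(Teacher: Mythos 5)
Your proposal is correct and matches the paper's own argument: the paper derives Corollary~\ref{cor:HPinterp} precisely by applying the scalar Proposition~\ref{cor:criticalGeneral} to each of the two equations in the system \eqref{odeHP} separately, with $A\mapsto A_i$, $w\mapsto w_i$, $S\mapsto S_{\bm n,i}$ for $i=1,2$. Your identification of the hypothesis (no common zeros of $P_{\bm n}$ with $A_iS_{\bm n,i}$) with the hypothesis of the Proposition, and of the field \eqref{fieldHP} via $\Phi_i'$, is exactly what the paper intends.
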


Thus, the zeros of the Hermite--Pad\'e polynomial $P_{\bm n}$ are in equilibrium (i.e., their counting measure is critical) in two different external fields, each one induced by the corresponding orthogonality weight $w_i$, with an addition of attracting charges placed at the zeros of the electrostatic partner $S_{\bm n,i}$. This ``redundancy'' allows us to provide also an electrostatic model  for these additional charges forming the external fields $\varphi_i$. Formally, it will be a consequence of the differential equations for the electrostatic partners $S_{\bm n, i}$ that we establish next, but we need to introduce first another auxiliary polynomial, this time generated by both weights simultaneously.

We define the following differential operators:
\begin{equation} \label{defDiffOperators}
\mathcal L_i[y]:= 	(A_i v_i ) \, 	\mathfrak{Wrons}[y, P_{\bm n}, q_{\bm n,i} ], \quad i=1, 2.
\end{equation}
Notice that we omit indicating the dependence of $\mathcal L_i$ from $\bm n$ for the sake of brevity of notation.
\begin{prop}\label{prop:WronskianS/w}
Function 
\begin{equation} \label{defRpolyn}
	R_{\bm n}:=  A_2  v_2 \, \mathcal L_1[q_{\bm n,2}] = - A_1 v_1 \,  \mathcal L_2[q_{\bm n,1}] 
	=(A_1 A_2 v_1 v_2)  \, \mathfrak{Wrons}[P_{\bm n}, q_{\bm n,1}, q_{\bm n,2}]
\end{equation}
is a polynomial of degree $\leq 2\sigma_1+2\sigma_2+3$.

If $A_1=A_2=:A$, then $A$ is a factor of $R_{\bm n}$. If in addition $B_1=B_2$, then $A^2$ is a factor of $R_{\bm n}$, i.e.
\begin{equation} \label{defR*}
	R_{\bm n}^*:=\frac{R_{\bm n}}{A^2}= v^2   \, \mathfrak{Wrons}[P_{\bm n}, q_{\bm n,1}, q_{\bm n,2}], \quad v:= A   w,
\end{equation} is a polynomial.
\end{prop}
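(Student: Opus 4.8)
The plan is to mirror the Riemann--Hilbert/Liouville strategy of the proof of Theorem~\ref{lem1aux}, supplemented by a classical Wronskian identity to pin down the degree. First, the three expressions for $R_{\bm n}$ coincide by antisymmetry of the determinant. Since $\mathcal L_i[y]=(A_iv_i)\,\mathfrak{Wrons}[y,P_{\bm n},q_{\bm n,i}]$, we have $A_2v_2\,\mathcal L_1[q_{\bm n,2}]=(A_1A_2v_1v_2)\,\mathfrak{Wrons}[q_{\bm n,2},P_{\bm n},q_{\bm n,1}]$ and $-A_1v_1\,\mathcal L_2[q_{\bm n,1}]=-(A_1A_2v_1v_2)\,\mathfrak{Wrons}[q_{\bm n,1},P_{\bm n},q_{\bm n,2}]$; the first Wronskian equals $\mathfrak{Wrons}[P_{\bm n},q_{\bm n,1},q_{\bm n,2}]$ (even permutation) and the second equals $-\mathfrak{Wrons}[P_{\bm n},q_{\bm n,1},q_{\bm n,2}]$ (one transposition), so both reduce to $(A_1A_2v_1v_2)\,\mathfrak{Wrons}[P_{\bm n},q_{\bm n,1},q_{\bm n,2}]$.

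Next I would show $R_{\bm n}$ is a polynomial. Using $v_iq_{\bm n,i}=A_i\,\mathfrak C_{w_i}[P_{\bm n}]$, $v_iq_{\bm n,i}'=A_i(\mathfrak C_{w_i}[P_{\bm n}])'-B_i\,\mathfrak C_{w_i}[P_{\bm n}]$, and the fact (obtained by differentiating \eqref{ratWHP} once more, exactly as in \eqref{identityUhat}) that $A_iv_iq_{\bm n,i}''$ is a polynomial combination of $\mathfrak C_{w_i}[P_{\bm n}]$ and its first two derivatives, one rewrites $R_{\bm n}$ as a $3\times3$ determinant $\det M$ with entries holomorphic in $\C\setminus(\Delta_1\cup\Delta_2)$. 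The Sokhotski--Plemelj formula shows that across $\Delta_i$ only the column carrying $\mathfrak C_{w_i}$ jumps, its jump replacing $\mathfrak C_{w_i}[P_{\bm n}]$ by a multiple of $P_{\bm n}w_i$; the semiclassical relations above then collapse that column-jump into a scalar multiple of the $P_{\bm n}$-column $(P_{\bm n},P_{\bm n}',\dots)^T$, the key cancellation being $A_i(P_{\bm n}w_i)'-B_iP_{\bm n}w_i=A_iP_{\bm n}'w_i$ in the middle row. Two proportional columns force the jump of $\det M$ to vanish, so $\det M$ continues analytically across $\Delta_1\cup\Delta_2$; the endpoint singularities are removable since $z^mv_i$ vanishes there by \eqref{endpoints2}, and $\det M$ grows polynomially at $\infty$, so Liouville's theorem makes it a polynomial.

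For the degree I would use the classical identity $P_{\bm n}\,\mathfrak{Wrons}[P_{\bm n},q_{\bm n,1},q_{\bm n,2}]=\mathfrak{Wrons}[f_{\bm n,1},f_{\bm n,2}]$, where $f_{\bm n,i}=\mathfrak{Wrons}[P_{\bm n},q_{\bm n,i}]=S_{\bm n,i}/v_i$. Substituting $f_{\bm n,i}=S_{\bm n,i}/v_i$ and using $v_i'/v_i=(A_i'+B_i)/A_i$ turns this into the \emph{polynomial} identity
\begin{equation*}
R_{\bm n}\,P_{\bm n}=A_1A_2\bigl(S_{\bm n,1}S_{\bm n,2}'-S_{\bm n,1}'S_{\bm n,2}\bigr)+S_{\bm n,1}S_{\bm n,2}\bigl[(A_1'+B_1)A_2-(A_2'+B_2)A_1\bigr].
\end{equation*}
Bounding degrees via $\deg S_{\bm n,i}\le N-n_i+\sigma_i$, $\deg A_i\le\sigma_i+2$, $\deg B_i\le\sigma_i+1$ and $n_1+n_2=N$, each term on the right has degree $\le N+2\sigma_1+2\sigma_2+3$; since $\deg P_{\bm n}=N$ and $R_{\bm n}$ is already a polynomial, $\deg R_{\bm n}\le 2\sigma_1+2\sigma_2+3$.

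Finally, for the factorizations, when $A_1=A_2=A$ the displayed identity reads $R_{\bm n}P_{\bm n}=A\bigl[A(S_{\bm n,1}S_{\bm n,2}'-S_{\bm n,1}'S_{\bm n,2})+S_{\bm n,1}S_{\bm n,2}(B_1-B_2)\bigr]$, but to avoid any coprimality assumption on $A$ and $P_{\bm n}$ I would instead re-run the Riemann--Hilbert argument on $R_{\bm n}/A=Av_1v_2\,\mathfrak{Wrons}[P_{\bm n},q_{\bm n,1},q_{\bm n,2}]$: since now $Av_iq_{\bm n,i}''=A_iv_iq_{\bm n,i}''$ is already polynomial in $\mathfrak C_{w_i}[P_{\bm n}]$ and its derivatives, a \emph{single} factor $A$ in the last row suffices to make all entries holomorphic off the cuts, and the same jump/endpoint/Liouville computation shows $R_{\bm n}/A$ is a polynomial. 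When in addition $B_1=B_2$, the bracket loses its second term and I would prove directly that $R_{\bm n}^*=v^2\,\mathfrak{Wrons}[P_{\bm n},q_{\bm n,1},q_{\bm n,2}]$ is a polynomial. This is the expected \textbf{main obstacle}: the entries $vq_{\bm n,i}''=A(\mathfrak C_{w_i}[P_{\bm n}])''-2B(\mathfrak C_{w_i}[P_{\bm n}])'+\lambda\,\mathfrak C_{w_i}[P_{\bm n}]$ now carry genuine poles at $\mathbb A$, and their cancellation is not term-by-term. It hinges on $B_1=B_2$ forcing the \emph{same} $\lambda=(B^2-AB'+A'B)/A$ in both columns; isolating the $\lambda$-terms, the cofactor of the singular part collapses, using $vq_{\bm n,i}=A\,\mathfrak C_{w_i}[P_{\bm n}]$, to $P_{\bm n}A\bigl((\mathfrak C_{w_1}[P_{\bm n}])'\mathfrak C_{w_2}[P_{\bm n}]-(\mathfrak C_{w_2}[P_{\bm n}])'\mathfrak C_{w_1}[P_{\bm n}]\bigr)$, whose factor $A$ cancels the pole of $\lambda$. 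Hence $R_{\bm n}^*$ is holomorphic across $\mathbb A$, and the usual jump/endpoint/Liouville argument then yields that it is a polynomial, i.e.\ $A^2\mid R_{\bm n}$.
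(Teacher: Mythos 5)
Your proposal is correct in substance, but it reaches the three conclusions by a route that only partially coincides with the paper's. The algebraic core is shared: your identity $R_{\bm n}P_{\bm n}=A_1A_2\bigl(S_{\bm n,1}S_{\bm n,2}'-S_{\bm n,1}'S_{\bm n,2}\bigr)+S_{\bm n,1}S_{\bm n,2}\bigl[(A_1'+B_1)A_2-(A_2'+B_2)A_1\bigr]$, obtained from $P_{\bm n}\,\mathfrak{Wrons}[P_{\bm n},q_{\bm n,1},q_{\bm n,2}]=\mathfrak{Wrons}[f_{\bm n,1},f_{\bm n,2}]$ and $f_{\bm n,i}=S_{\bm n,i}/v_i$, is exactly the paper's \eqref{IdR1} (derived there by eliminating $y''$ between the two equations \eqref{odeHP}), and the degree count is the same. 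Where you genuinely diverge is in how polynomiality of $R_{\bm n}$, $R_{\bm n}/A$ and $R_{\bm n}/A^2$ is established. The paper works with the rational expressions $R_{\bm n}=(A_1S_{\bm n,1}C_{\bm n,2}-A_2S_{\bm n,2}C_{\bm n,1})/P_{\bm n}'$ and $R_{\bm n}/A^2=(S_{\bm n,1}S_{\bm n,2}'-S_{\bm n,1}'S_{\bm n,2})/P_{\bm n}$, and removes the candidate poles using Proposition~\ref{prop:fromTHM2.1}(a) (analyticity of $S_{\bm n,i}/P_{\bm n}'$, and of $S_{\bm n,i}/P_{\bm n}$ at common zeros of $A$ and $P_{\bm n}$), together with the ODE itself at points where $P_{\bm n}'$ vanishes but $P_{\bm n}$ does not. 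You instead run a $3\times3$ Riemann--Hilbert/Liouville argument directly on the determinant, which is the natural extension of the paper's proof of Theorem~\ref{lem1aux} and is closer in spirit to what the paper does later for $E_{\bm n}$ and $F_{\bm n}$ (Theorem~\ref{thmODER} and Remark~\ref{remark:higherorder}); it avoids the $C_{\bm n,i}$ altogether and generalizes more readily to $m>2$ weights, at the price of tracking the local behaviour of $A_i^k v_i q_{\bm n,i}^{(k)}$ explicitly. Your identification of the residual singular part of $v^2\,\mathfrak{Wrons}[P_{\bm n},q_{\bm n,1},q_{\bm n,2}]$ at $\mathbb A$ -- the cofactor of $\lambda$ collapsing to $P_{\bm n}A\bigl(\mathfrak C_{w_1}[P_{\bm n}]'\,\mathfrak C_{w_2}[P_{\bm n}]-\mathfrak C_{w_2}[P_{\bm n}]'\,\mathfrak C_{w_1}[P_{\bm n}]\bigr)$ -- is correct and gives a clean, genuinely different explanation of why $B_1=B_2$ forces $A^2\mid R_{\bm n}$ (the paper gets this from \eqref{identityRpartCase} instead).

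Two steps deserve tightening, though neither is a real obstruction. First, your statement that across $\Delta_i$ ``only the column carrying $\mathfrak C_{w_i}$ jumps'' fails in the AT/Nikishin setting, where $\Delta_1=\Delta_2$ and both Cauchy-transform columns jump across the same cut; you must expand $\det M_+-\det M_-$ by multilinearity into the three determinants in which one or both columns are replaced by their jumps, each of which vanishes because every jumped column is a function multiple of the $P_{\bm n}$-column. Second, removability at the finite endpoints does not follow merely from $z^mv_i\to0$: one needs the normalized entries themselves to be $o(1/(z-z_0))$ there, which is exactly what Proposition~\ref{prop:fromTHM2.1}(c) supplies (and since the finite endpoints are zeros of $A_i$, the entries $A_i^kv_iq_{\bm n,i}^{(k)}$ are in fact bounded there). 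With those two remarks added, your argument is complete.
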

Notice that by our assumption \eqref{mainconditionwronks}, $R_{\bm n}\not \equiv 0$.
\begin{proof}
Let us define	
	\begin{equation}\label{IdR0}
		R_{\bm n} :=\frac{A_1S_{\bm n, 1} C_{\bm n, 2}-A_2S_{\bm n, 2}C_{\bm n, 1}}{P_{\bm n}'}.
	\end{equation}
Multiplying the equations in \eqref{odeHP} evaluated at $y=P_{\bm n}$ by $A_2S_{\bm n, 2}/P_{\bm n}$ and $A_1S_{\bm n, 1}/P_{\bm n}$ , respectively, and subtracting we obtain
\begin{equation}\label{IdR1}
R_{\bm n}\,  P_{\bm n} = -\mathfrak{Wrons} [A_1, A_2]	 S_{\bm n, 1}S_{\bm n, 2}  +A_1A_2 \, \mathfrak{Wrons}[S_{\bm n, 1},  S_{\bm n, 2}]  +(A_2B_1-A_1B_2)S_{\bm n, 1}S_{\bm n, 2} .
\end{equation}
Notice that $R_{\bm n}\,  P_{\bm n}$ is a polynomial.  An immediate consequence of the statement a) of Proposition \ref{prop:fromTHM2.1} is that $ S_{\bm n, 1} / P_{\bm n}'$ and $S_{\bm n, 2}/P_{\bm n}'$ are analytic at the zeros of $P_{\bm n}$, which  together with the bounds on the degree of  $C_{\bm n, i}$ from Theorem \ref{TeoClaveHP}  yields that  $R_{\bm n}$ is a polynomial of degree at most $2\sigma_1+2\sigma_2+3$.

On the other hand, straightforward calculation using \eqref{ratWHP} and \eqref{wronskF} shows also that
\begin{align*}
-\mathfrak{Wrons} [A_1, A_2]	 S_{\bm n, 1}S_{\bm n, 2}  +A_1A_2 \, \mathfrak{Wrons}[S_{\bm n, 1},  S_{\bm n, 2}]  & +(A_2B_1-A_1B_2)S_{\bm n, 1}S_{\bm n, 2} \\
& = A_1 A_2 v_1 v_2 \,  \mathfrak{Wrons}[f_{\bm n,1}, f_{\bm n, 2}],
\end{align*}
so that \eqref{IdR0} reduces to 
\begin{equation} \label{wronksHP}
A_1 A_2 v_1 v_2 \,  \mathfrak{Wrons}[f_{\bm n,1}, f_{\bm n, 2}] =P_n R_{\bm n}.
\end{equation}

Let
$$
d_{\bm n}:=\mathfrak{Wrons}[f_{\bm n,1}, f_{\bm n, 2}]. $$ 
Since
$$
f_{\bm n,i} '= \mathfrak{Wrons}[P_{\bm n}, q_{\bm n, i}]' = \det \begin{pmatrix}
	P_{\bm n} &  q_{\bm n, i} \\
	P_{\bm n}'' & q_{\bm n, i}''
\end{pmatrix},
$$
this gives us the following identity for $d_{\bm n}$:
\begin{equation} \label{identitiesW}
	\begin{split}
	d_{\bm n} =&  \det \begin{pmatrix}
		P_{\bm n} &  q_{\bm n, 1} \\
		P_{\bm n}' & q_{\bm n, 1}'
	\end{pmatrix}  \det \begin{pmatrix}
		P_{\bm n} &  q_{\bm n, 2} \\
		P_{\bm n}'' & q_{\bm n, 2}''
	\end{pmatrix} - \det \begin{pmatrix}
		P_{\bm n} &  q_{\bm n, 2} \\
		P_{\bm n}' & q_{\bm n, 2}'
	\end{pmatrix}  \det \begin{pmatrix}
		P_{\bm n} &  q_{\bm n, 1} \\
		P_{\bm n}'' & q_{\bm n, 1}''
	\end{pmatrix}  \\
= & P_{\bm n} \, \mathfrak{Wrons}[P_{\bm n}, q_{\bm n,1}, q_{\bm n,2}] ,
	\end{split}
\end{equation}
which together with \eqref{wronksHP} proves the first part of assertion. 

If $A_1=A_2=:A$ then \eqref{IdR1} becomes
\begin{equation*}
	A^2(S_{\bm n, 1}S_{\bm n, 2}'-S_{\bm n, 1}'S_{\bm n, 2})+A(B_1-B_2)S_{\bm n, 1}S_{\bm n, 2}=A  \frac{R_{\bm n}}{A}  P_{\bm n},
\end{equation*}
with
\begin{equation} \label{expreforR}
\frac{R_{\bm n}}{A} =  \frac{S_{\bm n, 1}C_{\bm n, 2}-S_{\bm n, 2}C_{\bm n, 1}}{P_{\bm n}'}.
\end{equation}
Assume that    $A(z_0)=0$. If also $P_{\bm n}(z_0)= 0$ then by the same argument as before, $ S_{\bm n, 1} / P_{\bm n}'$ and $S_{\bm n, 2}/P_{\bm n}'$ are analytic at $z_0$, as well as the right hand side of \eqref{expreforR}. If $P_{\bm n}(z_0)\neq 0$ but $P_{\bm n}'(z_0)= 0$, then by \eqref{odeHP},  
$$
 C_{\bm n, i} (z_0)\,  P_{\bm n}(z_0)=- A(z_0)  S_{\bm n, i}(z_0)\, P_{\bm n}''(z_0) , \quad i=1,2.
$$
In this case, $A  P_{\bm n}''/P_{\bm n}'$ is analytic at $z_0$, which implies again that the expression in the right hand side of \eqref{expreforR} is analytic at $z_0$. This proves that $R_{\bm n}/A$ is a polynomial. 

If in addition $B_1=B_2$, then \eqref{IdR1} reduces to
\begin{equation} \label{identityRpartCase}
	A^2(S_{\bm n, 1} S_{\bm n, 2}'-S_{\bm n, 1}'S_{\bm n, 2})=
	R_{\bm n} P_{\bm n}
	\qquad \Rightarrow\qquad \frac{R_{\bm n}}{A^2}=\frac{S_{\bm n, 1} S_{\bm n, 2}'-S_{\bm n, 1}'S_{\bm n, 2}}{P_{\bm n}}.
\end{equation}
Hence, $R_{\bm n}/A^2$ could have poles only at the common roots of $A$ and $P_{\bm n}$. But in this case, by Proposition~\ref{prop:fromTHM2.1}, $S_{\bm n, 1}/P_{\bm n}$ and $S_{\bm n, 2}/P_{\bm n}$ are analytic at the zeros of $ P_{\bm n}$. The proof is complete. 
\end{proof}

\begin{remark} \label{remark:equal}
		In the case when
		$$
		A_1=A_2=A,  \qquad \deg(A)< \deg(B_i)+1, \quad i =1, 2,  
		$$
		formula \eqref{IdR1} shows that
		\begin{equation} \label{degRoverAparticularcase}
			\deg\left( \frac{R_{\bm n}}{A}\right)= \deg(B_1)+ \deg(B_2)+ \deg(B_1-B_2)-2.
		\end{equation}
		If on the other hand, $A_1=A_2=A$,   $B_1=B_2=B$,  
		$$
		\sigma = \max \{\deg(A)-2, \deg(B)-1 \}=1 \quad \text{and} \quad n_1=n_2,
		$$
		then $R_{\bm n}^*$ is a constant; in other words,
		$$
		R_{\bm n}(x) = \const \times A^2.
		$$
		Indeed, by \eqref{identityRpartCase}, 
		$$
		R_{\bm n}^*=\frac{S_{\bm n, 1} S_{\bm n, 2}'-S_{\bm n, 1}'S_{\bm n, 2}}{P_{\bm n}}
		$$
		is a polynomial. Since 
		$\deg(S_{\bm n,1})=n_2+1$ and $\deg(S_{\bm n,2})=n_1+1$, $\deg(R_{\bm n}^*)\le 1$.  The  assumption that $n_1=n_2$ implies that the leading coefficient of $S_{\bm n,1}S_{\bm n,2}'$ and $S_{\bm 
			n,1}'S_{\bm n,2}$ match, which proves that $R_{\bm n}^*$ is a constant.
		\end{remark}

Now we are ready to produce the promised differential equations satisfied by the electrostatic partners:
\begin{thm}\label{thm:WronskianS/w}
There exist polynomials $D_1$ and $D_2$ (in general, dependent on ${\bm n}$) such that $S_{\bm n, 1}$  is   solution of the linear differential equation 
		\begin{equation} \label{ODEsForSis}
				A_1A_2P_{\bm n}R_{\bm n}\, y''+((2A_1A_2'+A_1B_2-A_2B_1)P_{\bm n}R_{\bm n}-A_1A_2(P_{\bm n}R_{\bm n}'+P_{\bm n}'R_{\bm n}))y'+D_1y   =0\,, 
		\end{equation}
	and $S_{\bm n, 2}$ satisfies
	\begin{equation} \label{ODEsForSis2}
			A_1A_2P_{\bm n}R_{\bm n}\, y''+((2A_1'A_2-A_1B_2+A_2B_1)P_{\bm n}R_{\bm n}-A_1A_2(P_{\bm n}R_{\bm n}'+P_{\bm n}'R_{\bm n}))y'+D_2y  =0.
	\end{equation}

		If $A_1=A_2$ and $B_1=B_2$, then the two differential equations coincide, i.e., $S_{\bm n, 1}$ and $S_{\bm n, 2}$ are solutions of the same differential equation
		$$P_{\bm n}R_{\bm n}^*y''-(P_{\bm n}'R_{\bm n}^{*}+P_{\bm n}(R_{\bm n}^{*})')y'+D^*y=0\,,$$
		where $R_{\bm n}^*$ was defined in \eqref{defR*}, and $D^*$ is a certain polynomial, dependent on ${\bm n}$.
\end{thm}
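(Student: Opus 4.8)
The plan is to exploit the fact that the two Wronskians $f_{\bm n,1}$ and $f_{\bm n,2}$ from \eqref{wronskF} are \emph{linearly independent} functions, and therefore both satisfy one and the same second order linear ODE, namely the one expressing the vanishing of the $3\times 3$ Wronskian $\mathfrak{Wrons}[y, f_{\bm n,1}, f_{\bm n,2}]$. Expanding this determinant along its first column gives
\begin{equation*}
d_{\bm n}\, f'' - d_{\bm n}'\, f' + e_{\bm n}\, f = 0, \qquad d_{\bm n}:=\mathfrak{Wrons}[f_{\bm n,1}, f_{\bm n,2}], \quad e_{\bm n}:= f_{\bm n,1}' f_{\bm n,2}'' - f_{\bm n,1}'' f_{\bm n,2}',
\end{equation*}
and the leading coefficient $d_{\bm n}$ is not identically zero precisely because of assumption \eqref{mainconditionwronks}, in view of \eqref{identitiesW}. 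The electrostatic partners are recovered as $S_{\bm n,i}=v_i f_{\bm n,i}$, so I would obtain the equations \eqref{ODEsForSis} and \eqref{ODEsForSis2} by performing in the displayed ODE the change of unknown $f = y/v_i$, $i=1,2$, and clearing denominators.

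Carrying this out, the substitution $f=y/v_i$ turns the coefficients into rational expressions in $v_i$ and its derivatives; the key simplification is that every occurrence of $v_i$ can be reduced through the logarithmic derivative $v_i'/v_i=(A_i'+B_i)/A_i$ coming from \eqref{ratWHP} and \eqref{defWHPv}. Using $d_{\bm n}=P_{\bm n}R_{\bm n}/(A_1A_2v_1v_2)$, which is exactly the content of \eqref{wronksHP} together with \eqref{identitiesW}, and multiplying the resulting equation for $S_{\bm n,1}$ by $A_1^2A_2^2 v_2/v_1$ (respectively by $A_1^2A_2^2 v_1/v_2$ for $S_{\bm n,2}$), the second order coefficient collapses to the polynomial $A_1A_2P_{\bm n}R_{\bm n}$. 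The first order coefficient is then $-A_1A_2P_{\bm n}R_{\bm n}$ times a logarithmic derivative; a short bookkeeping with $v_i'/v_i$, $d_{\bm n}'/d_{\bm n}$, $P_{\bm n}'/P_{\bm n}$ and $R_{\bm n}'/R_{\bm n}$ should collapse it to $(2A_1A_2'+A_1B_2-A_2B_1)P_{\bm n}R_{\bm n}-A_1A_2(P_{\bm n}R_{\bm n}'+P_{\bm n}'R_{\bm n})$ for $i=1$ and to the symmetric expression for $i=2$, matching \eqref{ODEsForSis}--\eqref{ODEsForSis2} exactly.

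It remains to check that the zeroth order coefficients $D_1,D_2$ are polynomials and not merely rational functions. A priori each $D_i=-(A_1A_2P_{\bm n}R_{\bm n}\,S_{\bm n,i}''+p_{1,i}\,S_{\bm n,i}')/S_{\bm n,i}$, with $p_{1,i}$ the first order coefficient just computed, is a single-valued rational function whose poles are confined to the zeros of the leading coefficient $A_1A_2P_{\bm n}R_{\bm n}$. I would show that all of these singularities are removable by a local analysis at each such point, exactly as in the proofs of Theorem~\ref{lem1aux} and Proposition~\ref{prop:WronskianS/w}: at a zero of $A_1A_2R_{\bm n}$ one uses the second, linearly independent solution $v_i f_{\bm n,3-i}$ to recover $D_i$ with no pole, while at a zero of $P_{\bm n}$ one invokes the analyticity of $S_{\bm n,i}/P_{\bm n}$ furnished by Proposition~\ref{prop:fromTHM2.1}. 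This makes $D_i$ entire of polynomial growth, hence a polynomial.

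Finally, the degenerate case $A_1=A_2=:A$, $B_1=B_2=:B$ falls out with almost no extra work: then $v_1=v_2=:v$, so the change of variable $f=y/v$ is \emph{the same} for both partners, forcing $S_{\bm n,1}$ and $S_{\bm n,2}$ to satisfy identical equations. Moreover $d_{\bm n}v^2=P_{\bm n}R_{\bm n}^*$ directly by the definition \eqref{defR*} of $R_{\bm n}^*$, so no auxiliary clearing factor is needed and the leading coefficient is already $P_{\bm n}R_{\bm n}^*$; the same logarithmic-derivative computation, now using $2v'/v+d_{\bm n}'/d_{\bm n}=P_{\bm n}'/P_{\bm n}+(R_{\bm n}^*)'/R_{\bm n}^*$, immediately yields the stated first order coefficient $-(P_{\bm n}'R_{\bm n}^*+P_{\bm n}(R_{\bm n}^*)')$. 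I expect the main obstacle to be purely organizational rather than conceptual: keeping the numerous $v_i$-derivative terms under control in the first- and zeroth-order coefficients, where working throughout with logarithmic derivatives, rather than expanding the determinants directly, is essential to avoid a combinatorial explosion.
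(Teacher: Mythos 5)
Your proposal follows essentially the same route as the paper's proof: both start from the vanishing of $\mathfrak{Wrons}[y, f_{\bm n,1}, f_{\bm n,2}]$, identify the leading coefficient $d_{\bm n}=\mathfrak{Wrons}[f_{\bm n,1},f_{\bm n,2}]$ with $P_{\bm n}R_{\bm n}/(A_1A_2v_1v_2)$ via \eqref{wronksHP} and \eqref{identitiesW}, perform the substitution $y\mapsto y/v_i$, and collapse the first-order coefficient with the logarithmic derivative $v_i'/v_i=(A_i'+B_i)/A_i$, with the degenerate case $A_1=A_2$, $B_1=B_2$ handled by the observation that the two changes of variable coincide. The only divergence is how polynomiality of $D_i$ is secured: the paper does it by direct computation, differentiating \eqref{defS12} to express $A_iv_if_{\bm n,i}'$ and $A_i^2v_if_{\bm n,i}''$ as polynomials in $S_{\bm n,i}$, $S_{\bm n,i}'$, $S_{\bm n,i}''$ and thereby showing that $A_1^2A_2^2v_1v_2\,\mathfrak{Wrons}[f_{\bm n,1}',f_{\bm n,2}']$ is a polynomial, whereas your removable-singularity argument is workable but requires the extra local care at the common zeros that you only sketch.
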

\begin{proof}
Notice that, away from the zeros of $A_1$ and $A_2$, the formal identity
	\begin{equation}    	\label{eq:det1}
		\mathfrak{Wrons}[y, f_{\bm n, 1}, f_{\bm n,2} ] =	\det	\begin{pmatrix}
			y &f_{\bm n,1} &f_{\bm n,2} \\
			y'&f_{\bm n,1} '&f_{\bm n,2}  '\\
			y'' &f_{\bm n,1}''&f_{\bm n,2}  ''
		\end{pmatrix}(z)=0
	\end{equation}
	is  satisfied by $y=f_{\bm n,i}$, $i=1,2 $. Expanding the determinant along the first column yields
	$$
	u_2(z) y''(z) - u_1(z) y'(z) + u_0(z) y (z) = 0 ,
	$$
	with
	$$
	u_2  =  \mathfrak{Wrons}[f_{\bm n,1}, f_{\bm n, 2}]= 	d_{\bm n},
	\quad
	u_1   =   \det	\begin{pmatrix}
		f_{\bm n,1} &f_{\bm n,2} \\
		f_{\bm n,1}''&f_{\bm n,2}  ''
	\end{pmatrix}(z)= \mathfrak{Wrons}[f_{\bm n,1}, f_{\bm n, 2}]'= 	d_{\bm n}',
	$$
	and
	\begin{equation}
		u_0 =   \det	\begin{pmatrix}
			f_{\bm n,1} '&f_{\bm n,2}  '\\
			f_{\bm n,1}''&f_{\bm n,2}  ''
		\end{pmatrix}(z)=  \mathfrak{Wrons}[f'_{\bm n,1}, f'_{\bm n, 2}](z).
		\label{f21}
	\end{equation}
	Differentiating \eqref{defS12} we get that for $ i=1, 2$,
	\begin{align*}
		A_i V_i \, f_{\bm n,i}' & = A_i	S_{\bm n, i}' - S_{\bm n, i}\left( A_i'+ B_i \right),  \\
		A_i^2 V_i\,  f_{\bm n,i}'' & = A_i \left(-S_{\bm n, i} \left(A_i''+B_i'\right)+A_i S_{\bm n, i}'' -B_i  S_{\bm n, i}'\right)+\left(2
		A_i'+B_i\right) \left(S_{\bm n, i} \left(A_i' +B_i\right)-A_i S_{\bm n, i}'\right).
	\end{align*}
	This shows that
	$$
	D:= A_1^2A_2^2 v_1 v_2  u_0 =  \det	\begin{pmatrix}
		A_1^2 v_1 \,	f_{\bm n,1} '& A_2^2 v_2\,  f_{\bm n,2}  '\\
		A_1^2 v_1 \,	f_{\bm n,1}''& A_2^2 v_2\,  f_{\bm n,2}  ''
	\end{pmatrix}
	$$
	is a polynomial. 
	
	Thus, we conclude that with this polynomial $D$, functions
	$f_{\bm n,i}$  are two independent solutions of the linear differential equation
	$$
	d_{\bm n}  y''
	-d_{\bm n}  'y'
	+\frac{D}{A_1^2A_2^2 v_1 v_2 }y=0. 
	$$
		
	With the change of variable $y\mapsto y/ v_1 $ and using the definition  \eqref{defS12} we see that $S_{\bm n,1}$  is a solution of the equation
	$$
	g_2(z) y''(z) + g_1(z) y'(z) + g_0(z) y (z) = 0 ,
	$$
	with
	\begin{align*}
		g_2&=\frac{d_{\bm n}}{v_1 }=\frac{R_{\bm n}P_{\bm n}}{A_1 A_2 v_1^2 v_2 },\\
		g_1&=\frac{-d_{\bm n}'}{v_1 }-\frac{2d_{\bm n} v_1'}{v_1^2}
		=-\frac{R_{\bm n}P_{\bm n}}{A_1 A_2 v_1^2 v_2 }\left(\frac{P_{\bm n}'}{P_{\bm n}}+\frac{R_{\bm n}'}{R_{\bm n}}-\frac{2A_1'+B_1}{A_1}-\frac{2A_2'+B_2}{A_2}+2\frac{A_1'+B_1}{A_1}\right),\\
		g_0&=\frac{d_{\bm n}' v_1'}{v_1^2 }+\frac{2d_{\bm n} (v_1' )^2}{v_1^3 }
		-\frac{d_{\bm n} v_1''}{v_1^2 }+\frac{D}{A_1^2 A_2^2v_1^2 v_2} \\
		&=\frac{R_{\bm n}P_{\bm n}}{A_1^2 A_2 v_1^2 v_2}\left(\left(\frac{P_{\bm n}'}{P_{\bm n}}+\frac{R_{\bm n}'}{R_{\bm n}}
		-\frac{2A_2'+B_2}{A_2}\right)(A_1'+B_1)
		-A_1''-B_1'+\frac{D}{A_2}\right),
	\end{align*}
	which shows that each   $A_1^2 A_2^2 v_1^2v_2 g_j$ is a polynomial.   The differential  equation for $S_{\bm n,2}$ is obtained in an analogous way.
	
	Finally, in the case $A_1=A_2$ and $B_1=B_2$ (and as explained in Remark~\ref{remark:A1=A2}, $w_1 =w_2 =w$), both changes of variable coincide, so we have the same linear differential equation of order $2$ with polynomial coefficients for $S_{\bm n,1}$ and $S_{\bm n,2}$. Indeed
	\begin{equation}    	\label{eq:det2}
		\mathfrak{Wrons}[y, S_{\bm n, 1}, S_{\bm n,2} ]  =	\det	\begin{pmatrix}
			y & S_{\bm n,1} &S_{\bm n,2} \\
			y'&S_{\bm n,1} '&S_{\bm n,2}  '\\
			y'' &S_{\bm n,1}''&S_{\bm n,2}  ''
		\end{pmatrix} =0
	\end{equation}
	is  satisfied by $y=S_{\bm n,i}$, $i=1,2 $ and by \eqref{identityRpartCase}, the coefficients of $y''$, $y'$ and $y$ are
	$$\mathfrak{Wrons}[S_{\bm n,1}, S_{\bm n, 2}]=P_{\bm n}R_{\bm n}^*\,,\qquad
	-\mathfrak{Wrons}[S_{\bm n,1}, S_{\bm n, 2}]'=-(P_{\bm n}R_{\bm n}^*)'\,,\qquad
	\mathfrak{Wrons}[S_{\bm n,1}', S_{\bm n, 2}']=D^*\,,$$
	respectively. The statement is proved.
\end{proof}

The differential equations \eqref{ODEsForSis}--\eqref{ODEsForSis2} imply that, respectively, 
\begin{equation*}  
	\begin{split}
		 y''+\left( \frac{2A_2'  }{A_2} -  \frac{ B_1 }{A_1} +  \frac{ B_2 }{A_2}  -  \frac{   R_{\bm n}' }{  R_{\bm n}}  -  \frac{   P_{\bm n}'}{ P_{\bm n}} \right) y' & =0 \quad \text{at the zeros of } 
		 S_{\bm n, 1}, \\
		y''+\left( \frac{ 2A_1'   }{A_1  } + \frac{  B_1   }{A_1 }- \frac{  B_2    }{A_2 }  -   \frac{    R_{\bm n}'  }{  R_{\bm n}} -   \frac{    P_{\bm n}'  }{ P_{\bm n} }  \right) y' &=0 \quad \text{at the zeros of } 
		S_{\bm n, 2},
	\end{split}
\end{equation*}
and the characterization \eqref{eq:condEqBis}, along with definitions \eqref{ratWHP},  yields an electrostatic interpretation of the zeros of its solutions. Recall that $R_{\bm n} \not \equiv 0$ is the polynomial defined in \eqref{defRpolyn} (see alternative expressions in \eqref{IdR0}, \eqref{IdR1}). Then
\begin{cor}\label{cor:Scritical}
	Let $i=1, 2$, and let
	$$
	\phi_1: = \frac{1}{2}\log\left|\frac{P_{\bm n}R_{\bm n}}{A_1 A_2 }\right| + \frac{1}{2}\log\left|\frac{ v_1}{  v_2}\right|, \quad \phi_2(z):= \frac{1}{2}\log\left|\frac{P_{\bm n}R_{\bm n}}{A_1 A_2 }\right| + \frac{1}{2}\log\left|\frac{ v_2}{  v_1}\right|.
	$$
	If for $i\in \{1, 2\}$, the roots of $S_{\bm n,i}$ are simple, then the discrete zero-counting measure $\nu\left( S_{\bm n,i} \right)$ of $S_{\bm n,i}$ is $\phi_i$-critical in the sense of Definition~\ref{defCriticalScalar}. 
\end{cor}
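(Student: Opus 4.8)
The plan is to read the electrostatic interpretation directly off the second-order equations for $S_{\bm n,i}$ produced in Theorem~\ref{thm:WronskianS/w}, using the dictionary between equations of the form \eqref{odegeneral} and critical measures encoded in \eqref{eq:condEqBis} and Definition~\ref{defCriticalScalar}.

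First I would take the differential equation \eqref{ODEsForSis} for $S_{\bm n,1}$ and divide through by its leading coefficient $A_1A_2P_{\bm n}R_{\bm n}$, which is legitimate away from the zeros of this product (recall $R_{\bm n}\not\equiv0$ by \eqref{mainconditionwronks}). Evaluating at a zero $\zeta$ of $S_{\bm n,1}$, the term proportional to $y=S_{\bm n,1}$ vanishes, leaving exactly the reduced relation displayed immediately before the corollary,
\[
y''+\left( \frac{2A_2'}{A_2} - \frac{B_1}{A_1} + \frac{B_2}{A_2} - \frac{R_{\bm n}'}{R_{\bm n}} - \frac{P_{\bm n}'}{P_{\bm n}} \right) y' =0 \quad\text{at the zeros of } S_{\bm n,1},
\]
and likewise for $S_{\bm n,2}$ from \eqref{ODEsForSis2}. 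The hypothesis that the roots of $S_{\bm n,i}$ are simple ensures that $\nu(S_{\bm n,i})$ is the counting measure of distinct points, so that the stationarity conditions \eqref{eq:condEqBis} apply; I would also record that these zeros must stay off the zeros of $A_1A_2P_{\bm n}R_{\bm n}$, which is the singular set of the field $\phi_i$, so that $\phi_i$-criticality in the sense of Definition~\ref{defCriticalScalar} is well posed.

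Next I would match the reduced equation against \eqref{eq:condEqBis}, which states that $\nu(S_{\bm n,1})$ is $\Re\Phi_1$-critical precisely when $(y''-2\Phi_1'y')(\zeta)=0$ at each zero $\zeta$. Reading off the coefficient of $y'$ yields
\[
\Phi_1'=\frac{1}{2}\left(\frac{P_{\bm n}'}{P_{\bm n}}+\frac{R_{\bm n}'}{R_{\bm n}}-\frac{2A_2'}{A_2}+\frac{B_1}{A_1}-\frac{B_2}{A_2}\right).
\]
To identify this with $\phi_1$, I would set $\Phi_1:=\tfrac12\log\frac{P_{\bm n}R_{\bm n}}{A_1A_2}+\tfrac12\log\frac{v_1}{v_2}$ as a locally holomorphic function; then $\phi_1=\Re\Phi_1$ because $\log|f|=\Re\log f$. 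Differentiating this expression and substituting $v_i'/v_i=A_i'/A_i+B_i/A_i$, which comes from \eqref{defWHPv} and \eqref{ratWHP}, the two contributions $\pm A_1'/A_1$ cancel and reproduce exactly the $\Phi_1'$ above, leaving the asymmetric term $2A_2'/A_2$. Interchanging the indices $1\leftrightarrow2$ throughout handles $S_{\bm n,2}$ together with $\phi_2$.

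Since the whole argument is a bookkeeping translation between an ODE and a stationarity condition, I expect no genuine analytic obstacle. The one place deserving care is the verification $\phi_i=\Re\Phi_i$: one must organize the several logarithmic-derivative terms correctly and invoke the semiclassical relation \eqref{ratWHP} for $v_i$ at the right step, so that the $A_i'$ terms combine into the asymmetric coefficient appearing in the reduced equation rather than a symmetric one.
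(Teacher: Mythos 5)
Your proposal is correct and follows exactly the route the paper takes: the paper's entire argument for this corollary consists of dividing \eqref{ODEsForSis}--\eqref{ODEsForSis2} by $A_1A_2P_{\bm n}R_{\bm n}$, evaluating at the zeros of $S_{\bm n,i}$ to obtain the two reduced relations displayed just before the corollary, and matching the coefficient of $y'$ against \eqref{eq:condEqBis} via \eqref{ratWHP}, which is precisely your bookkeeping (including the cancellation of the $A_i'/A_i$ terms that produces the asymmetric $2A_2'/A_2$, resp.\ $2A_1'/A_1$). Your explicit remark that the zeros of $S_{\bm n,i}$ must avoid the singular set of $\phi_i$ for Definition~\ref{defCriticalScalar} to apply is, if anything, slightly more careful than the paper's own exposition.
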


\begin{remark}
As it follows from \eqref{ODEsForSis}--\eqref{ODEsForSis2}, zeros of  $S_{\bm n,i}$  can be multiple only at zeros of $A_1A_2P_{\bm n}R_{\bm n}$. By the definition of $S_{\bm n,i}$  via $\mathfrak D_{w_i}[P_{\bm n}]$ as in \eqref{defTransform2}, if $A_i$ and $B_i$ share a common root (case not excluded by our assumptions) then this root is also a zero of $S_{\bm n,i}$. 
\end{remark}

Notice that the roots of $S_{\bm n,i}$ are in equilibrium in  the external field $\phi_i$ created not only by charges fixed at the zeros of $A_2$ or $A_1$, but also by additional masses, all charged as $-1/2$, placed at the roots of $P_{\bm n}R_{\bm n}$. As $N=n_1 + n_2$ grows large, the dominant interaction is provided by the relation between the zeros of $S_{\bm n,i}$ and $P_{\bm n}$. This motivates to combine the statements of Corollaries \ref{cor:HPinterp} and \ref{cor:Scritical} into a single  electrostatic model that we formulate using the notion of the critical vector measures (see Definition~\ref{defCriticalVector}):
\begin{thm} \label{thm:vectorelectrostatics}
Let $R_{\bm n}\not \equiv 0$ be the auxiliary polynomial of degree $\leq 2\sigma_1+2\sigma_2+3$ defined in \eqref{defRpolyn}. 
	If  the roots of $P_{\bm n}$ and of  $S_{\bm n,1}$ are simple, then 
the discrete vector  measure $\vec \nu_1:=(\nu(P_{\bm n}),\nu(S_{\bm n,1}))$ is a  critical vector  measure for the energy functional $\mathcal{E}_{\vec \varphi, a}$, with $a=-1/2$ and
\begin{equation} \label{defVectorExternalField1}
	\vec \varphi = \left(  \frac{1}{2}\log\left|\frac{1}{v_1 }\right| ,  \frac{1}{2}\log\left|\frac{R_{\bm n} }{A_1 A_2  }\right| +  \frac{1}{2}\log\left|\frac{ v_1}{  v_2}\right|  \right).
\end{equation}

Analogously, if the roots of $P_{\bm n}$ and of  $S_{\bm n,2}$ are simple, then 
the discrete vector  measure $\vec \nu_2:=(\nu(P_{\bm n}),\nu(S_{\bm n,2}))$ is a critical  vector  measure for the energy functional $\mathcal{E}_{\vec \varphi, a}$, with $a=-1/2$ and
\begin{equation} \label{defVectorExternalField2}
\vec \varphi  = \left(  \frac{1}{2}\log\left|\frac{1}{v_2 }\right| ,  \frac{1}{2}\log\left|\frac{R_{\bm n} }{A_1 A_2  }\right| +  \frac{1}{2}\log\left|\frac{ v_2}{  v_1}\right|  \right).
\end{equation}
\end{thm}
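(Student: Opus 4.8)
The plan is to reduce the asserted vector criticality to the two scalar equilibrium conditions recorded in \eqref{VectorequilConditions}, and then to recognize each of these as a restatement of Corollary~\ref{cor:HPinterp} and Corollary~\ref{cor:Scritical}, respectively, after rewriting the logarithms of the polynomials $S_{\bm n,1}$ and $P_{\bm n}$ as logarithmic potentials of their zero-counting measures. By Definition~\ref{defCriticalVector} and the equivalence \eqref{VectorequilConditions}, proving that $\vec \nu_1=(\nu(P_{\bm n}),\nu(S_{\bm n,1}))$ is critical for $\mathcal{E}_{\vec \varphi,a}$ with $a=-1/2$ and $\vec \varphi$ as in \eqref{defVectorExternalField1} amounts to showing that $\nu(P_{\bm n})$ is $F_1$-critical with $F_1=-\tfrac12 U^{\nu(S_{\bm n,1})}+\varphi_1$ and that $\nu(S_{\bm n,1})$ is $F_2$-critical with $F_2=-\tfrac12 U^{\nu(P_{\bm n})}+\varphi_2$, where $\varphi_1,\varphi_2$ are the two components of \eqref{defVectorExternalField1}. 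The simplicity of the zeros of $P_{\bm n}$ and of $S_{\bm n,1}$, together with the disjointness of these zero sets already built into Corollary~\ref{cor:HPinterp}, is exactly what makes $\vec \nu_1$ and the energy $\mathcal{E}_{\vec \varphi,a}$ well defined, and I would verify this admissibility before anything else.

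For the first condition I would start from the field $\varphi_1=\tfrac12\log\left|S_{\bm n,1}/v_1\right|$ of Corollary~\ref{cor:HPinterp} (the case $i=1$ of \eqref{fieldHP}) and split off $\tfrac12\log|S_{\bm n,1}|$. Writing $S_{\bm n,1}=c\prod_j(z-s_j)$ with leading coefficient $c$ gives $\tfrac12\log|S_{\bm n,1}(z)|=\tfrac12\log|c|-\tfrac12 U^{\nu(S_{\bm n,1})}(z)$, so that
$$
\tfrac12\log\left|\tfrac{S_{\bm n,1}}{v_1}\right|=-\tfrac12 U^{\nu(S_{\bm n,1})}+\tfrac12\log\left|\tfrac{1}{v_1}\right|+\tfrac12\log|c|.
$$
Since an additive constant does not affect the gradient in Definition~\ref{defCriticalScalar}, the field of Corollary~\ref{cor:HPinterp} coincides, up to that constant, with $F_1=-\tfrac12 U^{\nu(S_{\bm n,1})}+\varphi_1$ for $\varphi_1=\tfrac12\log|1/v_1|$, the first component of \eqref{defVectorExternalField1}. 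Thus the first equilibrium condition is precisely Corollary~\ref{cor:HPinterp}.

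For the second condition I would perform the same manipulation on the field $\phi_1=\tfrac12\log\left|P_{\bm n}R_{\bm n}/(A_1A_2)\right|+\tfrac12\log|v_1/v_2|$ of Corollary~\ref{cor:Scritical}. Because $P_{\bm n}$ is monic, one has $\tfrac12\log|P_{\bm n}(z)|=-\tfrac12 U^{\nu(P_{\bm n})}(z)$ with no additive constant at all, whence
$$
\phi_1=-\tfrac12 U^{\nu(P_{\bm n})}+\tfrac12\log\left|\tfrac{R_{\bm n}}{A_1A_2}\right|+\tfrac12\log\left|\tfrac{v_1}{v_2}\right|=-\tfrac12 U^{\nu(P_{\bm n})}+\varphi_2,
$$
with $\varphi_2$ the second component of \eqref{defVectorExternalField1}. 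Hence the $\phi_1$-criticality of $\nu(S_{\bm n,1})$ given by Corollary~\ref{cor:Scritical} is exactly the second equilibrium condition. Feeding both conditions into \eqref{VectorequilConditions} establishes the claim for $\vec \nu_1$, and the statement for $\vec \nu_2$ follows verbatim after swapping the indices $1$ and $2$, now using the $i=2$ instances of \eqref{fieldHP} and of $\phi_2$ together with the field \eqref{defVectorExternalField2}.

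The computation is essentially bookkeeping, so the one point that genuinely requires care—and the conceptual heart of the statement—is the consistency of the coupling constant. The coefficient $-1/2$ must appear simultaneously as the coefficient of $U^{\nu(S_{\bm n,1})}$ in the field felt by $\nu(P_{\bm n})$ and as the coefficient of $U^{\nu(P_{\bm n})}$ in the field felt by $\nu(S_{\bm n,1})$; this symmetry is what allows $\vec \nu_1$ to be critical for a single symmetric interaction matrix $M=\left(\begin{smallmatrix}1&a\\a&1\end{smallmatrix}\right)$ rather than merely a pair of unrelated scalar equilibria. That the two halves truly coincide is not automatic: it is forced by the $\tfrac12\log|S_{\bm n,1}|$ term in \eqref{fieldHP} and the $\tfrac12\log|P_{\bm n}|$ term in $\phi_1$ each producing the same half-charge, and I would check this matching of coefficients explicitly before invoking \eqref{VectorequilConditions}.
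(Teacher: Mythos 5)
Your proposal is correct and follows exactly the route the paper intends: the theorem is stated as a direct combination of Corollaries~\ref{cor:HPinterp} and \ref{cor:Scritical}, obtained by splitting off $\tfrac12\log|S_{\bm n,1}|$ and $\tfrac12\log|P_{\bm n}|$ as $-\tfrac12 U^{\nu(\cdot)}$ (up to an irrelevant additive constant) and matching the result against the two scalar conditions in \eqref{VectorequilConditions} with $a=-1/2$. Your explicit check that the same coupling constant $-1/2$ appears on both sides, so that the pair of scalar equilibria assembles into a single symmetric interaction matrix, is precisely the point the paper leaves implicit.
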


Some observations are in order. First, the negative value of the interaction parameter $a=-1/2$ in the electrostatic model above shows that zeros of $P_{\bm n}$ and zeros of $S_{\bm n,i}$ have opposite charges, and thus are mutually attracting. This indicates that in general we should not expect the equilibrium configurations to provide minima for the energy functionals, at least, without additional constraints. 

In the case when $S_{\bm n,i} \equiv \const$, the assertion of the theorem is still valid taking $\nu(S_{\bm n,i})=0$. 

If $A_1=A_2=A$ and  $B_1=B_2$, we have that $v_1=v_2=v$, so same  external field, 
\begin{equation} \label{defVectorExternalField3}
	\vec \varphi = \left(  \frac{1}{2}\log\left|\frac{1}{v }\right| ,  \frac{1}{2}\log\left| R^*_{\bm n} \right|    \right)
\end{equation} 
is acting both on the zeros of $S_{\bm n,1}$ and $S_{\bm n,2}$. Moreover, as observed in Remark \ref{remark:equal} ii), if additionally  
$$
\sigma = \max \{\deg(A)-2, \deg(B)-1 \}=1 \quad \text{and} \quad n_1=n_2,
$$
then $	R_{\bm n}^* $ is a constant. In other words, the second component of  $\vec \varphi $ is zero, so the external field acts only on the zeros of $P_{\bm n}$.

The electrostatic model formulated above becomes meaningful if we complement it with some additional information, such as the localization of the zeros of the participating polynomials (or at least, of the bulk of them). This is impossible in the general case considered so far. Hence, we need to impose additional assumptions on the weights $w_i$'s. This will be carried out in the next section.

\subsection{Some special cases of multiple orthogonal polynomials}\label{sec:special}
\

In this section we will try to make our construction more informative by clarifying the location of the zeros of the electrostatic partners $S_{\bm n, i}$ of $P_{\bm n}$. Many of these results can be predicted by observing that the roots of 
$S_{\bm n, i}$ are de facto critical points of the error function of the Hermite-Pad\'e approximants, see \eqref{S_inew}.

\subsubsection{Angelesco systems}\label{sec:angelesco}
\

The best understood situation when the existence and uniqueness of the Hermite-Pad\'e orthogonal polynomial $P_{\bm n}$ satisfying relations \eqref{defHP} is assured is the so--called Angelesco case, introduced by Angelesco \cite{angelesco} in 1919, and later studied in \cite{MR412503} and in the works of Aptekarev, Gonchar, Kaliaguin, Nikishin, Rakhmanov and others, see e.g.~\cite{MR870267, GRS97, MR1391763, niksor}.  We assume now that $\Delta_1$ and $  \Delta_2$ are real intervals, and 
\begin{equation} \label{caseAngelesco}
\os \Delta_1 \cap \os \Delta_2=\emptyset,
\end{equation}
and $w_1, w_2$ are $\ge 0$ on their supports. Under these conditions, for every multi-index $\bm n=(n_1, n_2)$, polynomial $P_{\bm n}$ is of maximal degree, 
$$
\deg P_{\bm n} = n_1+n_2 =N, 
$$
(in other words,   $\bm n$ is a \textit{normal} index); since this is valid for every $\bm n = (n_1,n_2)$, the system is known as  \textit{perfect}, see   \cite{Mahler}. 

Moreover, in the Angelesco case, $P_{\bm n}$ has exactly $n_1$ and $n_2$ simple zeros in the interiors of $\Delta_1$ and $\Delta_2$, respectively (see \cite[Sect. 5.6]{niksor}). Additionally, the localization of the majority of the zeros of the polynomials $S_{\bm n, i}$ is given in the following result:
\begin{prop}\label{cor:zerosS12ang}
	Polynomial $S_{\bm n, 1}$ (respect. $S_{\bm n, 2}$) has $n_2-1$ (respect. $n_1-1$) zeros interlacing with those of $P_{\bm n}$ on $\Delta_2$ (respect. $\Delta_1$).
\end{prop}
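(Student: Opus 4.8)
The plan is to reduce the statement to counting critical points of a function of the second kind and then to run a Rolle-type argument on $\os\Delta_2$. By \eqref{defS12}--\eqref{wronskF} we have $S_{\bm n,1}=v_1\,f_{\bm n,1}$ with $f_{\bm n,1}=\mathfrak{Wrons}[P_{\bm n},q_{\bm n,1}]=P_{\bm n}^2\,(q_{\bm n,1}/P_{\bm n})'$ (cf.\ \eqref{S_inew}). In the Angelesco configuration \eqref{caseAngelesco}, $\Delta_1$ and $\Delta_2$ are disjoint intervals and the weight $w_1=\exp(\int B_1/A_1)$ together with $A_1$ is analytic and non-vanishing on $\os\Delta_2$, so $v_1=A_1w_1$ never vanishes there; hence on $\os\Delta_2$ the zeros of $S_{\bm n,1}$ coincide with the critical points of the real function $g:=q_{\bm n,1}/P_{\bm n}$. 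Since $P_{\bm n}$ has exactly $n_2$ simple zeros $x_1<\dots<x_{n_2}$ in $\os\Delta_2$, I would try to exhibit one critical point of $g$ in each of the $n_2-1$ open gaps $(x_j,x_{j+1})$.

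The decisive ingredient is a sign lemma: \emph{$\mathfrak C_{w_1}[P_{\bm n}]$ does not vanish on $\os\Delta_2$}, so that $q_{\bm n,1}=\mathfrak C_{w_1}[P_{\bm n}]/w_1$ has constant sign there. I would prove this by contradiction. If $\mathfrak C_{w_1}[P_{\bm n}](x_0)=0$ for some $x_0\in\os\Delta_2$, then writing an arbitrary polynomial $r$ of degree $\le n_1$ as $r(t)=r(x_0)+(t-x_0)\tilde r(t)$ with $\deg\tilde r\le n_1-1$ and invoking the orthogonality \eqref{defHP} on $\Delta_1$, one obtains $\int_{\Delta_1}\frac{r(t)P_{\bm n}(t)w_1(t)}{t-x_0}\,dt=0$ for every such $r$. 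Factoring $P_{\bm n}=P^{(1)}P^{(2)}$, where $P^{(1)}$ carries the $n_1$ zeros on $\Delta_1$ and $P^{(2)}$ the $n_2$ zeros on $\Delta_2$, and taking $r=P^{(1)}$, the integrand becomes $\frac{(P^{(1)})^2P^{(2)}w_1}{t-x_0}$, which has constant sign on $\Delta_1$: indeed $w_1\ge 0$ and $(P^{(1)})^2\ge 0$, while $P^{(2)}(t)$ and $t-x_0$ keep a fixed sign for $t\in\Delta_1$ by \eqref{caseAngelesco}. The integral is therefore nonzero, a contradiction.

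With the lemma established I would conclude by a residue argument. On $\os\Delta_2$ the function $g$ has simple poles exactly at $x_1,\dots,x_{n_2}$, with residues $q_{\bm n,1}(x_j)/P_{\bm n}'(x_j)$; since $q_{\bm n,1}$ keeps a constant sign while $P_{\bm n}'(x_j)$ alternates at consecutive simple zeros, the residues alternate in sign. Hence on each gap $(x_j,x_{j+1})$ the (real, otherwise smooth) function $g$ tends to the \emph{same} signed infinity at both endpoints, so it attains an interior extremum and $g'$ vanishes somewhere in $(x_j,x_{j+1})$. This produces $n_2-1$ distinct zeros of $S_{\bm n,1}$, one per gap, interlacing the zeros of $P_{\bm n}$ on $\Delta_2$; the count is consistent with $\deg S_{\bm n,1}\le n_2+\sigma_1$. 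The statement for $S_{\bm n,2}$ on $\Delta_1$ follows by interchanging the indices $1\leftrightarrow 2$. The hard part is the sign lemma: the Rolle step is soft, but the non-vanishing of $\mathfrak C_{w_1}[P_{\bm n}]$ on the conjugate interval is precisely where the Angelesco hypothesis \eqref{caseAngelesco} and the full orthogonality \eqref{defHP} on $\Delta_1$ are genuinely needed.
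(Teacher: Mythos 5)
Your proof is correct and takes essentially the same route as the paper: the decisive sign lemma --- that $\mathfrak C_{w_1}[P_{\bm n}]$ does not vanish on $\os\Delta_2$ --- is established exactly as in the paper's proof, namely via the factorization $P_{\bm n}=P_{\bm n,1}P_{\bm n,2}$ and the reproducing identity \eqref{eq:CauchTrEq} with $T=P_{\bm n,1}$ (your contradiction argument is the same computation). Your concluding Rolle argument on $g=q_{\bm n,1}/P_{\bm n}$ is an equivalent repackaging of the paper's Lemma~\ref{propInterlacing1}, which gets the same alternation by evaluating $S_{\bm n,1}(\zeta_j)=-A_1(\zeta_j)\,\mathfrak C_{w_1}[P_{\bm n}](\zeta_j)\,P_{\bm n}'(\zeta_j)$ at consecutive zeros and invoking the intermediate value theorem.
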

\begin{proof}
Let us  write
	\begin{equation} \label{defFactorization}
		P_{\bm n}(x) = P_{\bm n,1}(x)\,P_{\bm n,2}(x),
	\end{equation}
where $P_{\bm n,i}$ is the monic polynomial whose zeros agree with those of $P_{\bm n}$ on $\Delta_i$, $i=1,2$.  	
	Taking $T = P_{\bm n,i}$ in \eqref{eq:CauchTrEq}, we conclude that
	\begin{equation*}
  \mathfrak C_{w_i}[P_{\bm n}] = \frac{1}{P_{\bm n,i}} \, \mathfrak C_{w_i}[P_{\bm n,i} \, P_{\bm n}] .
	\end{equation*}
Since the integrand in $\mathfrak C_{w_i}[P_{\bm n,i} \, P_{\bm n}] $  preserves sign on $\Delta_i$, it implies that for $i=1, 2$, the Cauchy transform $\mathfrak C_{w_i}[P_{\bm n}]$ has no zeros in $\R \setminus \Delta_i$.

It remains to apply Lemma~\ref{propInterlacing1} with $S=S_{\bm n, i}$. 
\end{proof}

Recall that by Theorem~\ref{TeoClaveHP}, $S_{\bm n, i}$  are polynomials   of degree at most $N-n_i+\sigma_i $, $i=1, 2$. Proposition~\ref{cor:zerosS12ang} shows that  all zeros of $S_{\bm n, i}$, $i=1,2$, except for at most  $\sigma_i+1$ of them (amount only depending on the classes of the weights), are well localized by this interlacing property.

Thus, the electrostatic model for the zeros of $P_{\bm n}$, stated in Corollary~\ref{cor:HPinterp}, is as follows. If we consider each one of the $n_1$ zeros of $P_{\bm n}$ on $\Delta_1$ as a positive unit charge, then they are in equilibrium (or more exactly, they are in critical configuration) in the field generated by:
\begin{itemize}
\item the repulsion of the unit positive charges placed at rest of the zeros of $P_{\bm n}$, on $\Delta_2$, 
\item the attraction of the zeros of $S_{\bm n, 1}$, this time with charge $-1/2$, all except at most $\sigma_1+1$ of them interlacing with the zeros of $P_{\bm n}$ on $\Delta_2$, and
\item the background potential from the orthogonality weight $w_1$ on $\Delta_1$. 
\end{itemize} 
A symmetric picture is obviously valid on $\Delta_2$.

Angelesco--Jacobi polynomials constitute an example of an Angelesco system. They are considered in detail in Section \ref{sec:AngelescoJacobi}.

\subsubsection{AT systems and generalized Nikishin systems}\label{sec:nikishin}

\

We assume now that, unlike the Angelesco case,  both weights $w_i\ge 0$, $i=1, 2$, are supported on the same interval:
\begin{equation} \label{assumptionNikishin1}
	\Delta_1 = \Delta_2 = \Delta = [a,b] \subset \R. 
\end{equation}
These two weights form an algebraic Chebyshev system (or an \textit{AT system}) for the multi-index $\bm n =(n_1,n_2)$ if  
\begin{equation} \label{system}
\left\{w_{1}(x), x w_{1}(x), \dots, x^{n_{1}-1} w_{1}(x), w_{2}(x), x w_{2}(x), \dots, x^{n_{2}-1} w_{2}(x) \right\}
\end{equation}
is a Chebyshev system on $\Delta$, that is, if  every non-trivial linear combination of functions from \eqref{system}  with real coefficients has at most $N=n_1+n_2$ zeros on  $\Delta$.   For further details, see e.g. \cite[Chapter 4, \S 4]{niksor} or \cite[Section 23.1.2]{Ismail05}. 
 
It is known (see e.g.  \cite[Theorem 23.1.4]{Ismail05}) that if the multi-index $\bm n =(n_1,n_2)$ is such that $\left(w_{1},   w_{2}\right)$ is an AT system on $[a, b]$ for every index $\bm m =(m_1,m_2) $ for which $m_{j} \leq n_{j}$,  $j=1, 2$,  then 
$\bm n$ is normal, and the multiple orthogonal polynomial $P_{\bm n}$  has all its $N$ zeros,  all simple, on $(a, b)$.

A construction of an AT  system that now is known to be perfect (see \cite{FiLo}) was put forward by E. M. Nikishin \cite{Nik82}; it is called an \textit{$MT$-system} in \cite{niksor}, but is nowadays known as a \textit{Nikishin system}.  Namely, we assume additionally  that the ratio $w_2/w_1$ is the Cauchy transform (also known as a \textit{Markov function}) of a non-negative function on an interval $[c,d]\subset \R$, whose interior is disjoint with $\Delta$. Besides normality for all multi-indices $\bm n$ and that all zeros of $P_{\bm n}$ are simple and belong to the open interval $(a,b)$, this allows localization of zeros of the electrostatic partner, as we show below.

\begin{remark}
Nikishin \cite{Nik82} proved the normality for all indices of the form $(n,n)$ and $(n+1,n)$,  asserting without proof that it holds for any index $(n_1,n_2)$ such that $n_1 \geq n_2$. He called it a \textit{weakly perfect} system, but a result for Markov functions (see e.g. \cite[Lemma 6.3.5]{StTo}) implies that  weak perfectness is equivalent to  perfectness of the system. Later, K. Driver and H. Stahl established the normality for any index in the case of Nikishin systems of two functions \cite{DrSt} (see also \cite{Bulo}), and more recently, U. Fidalgo and G. L\'{o}pez proved perfectness of a Nikishin system of any order \cite{FiLo}. 
\end{remark}

As before, we restrict our attention to semiclassical weights, but slightly weaken  Nikishin's assumptions. Namely, in the situation \eqref{assumptionNikishin1} we suppose that $w_1$, $w_2$ are non-negative weights on $[a,b]$ such that:
\begin{itemize}
\item $w_1$ is a semiclassical weight on $[a,b]$;

\item weight $w_2$ is of the form
\begin{equation} \label{defW2}
w_2(x)= | \Pi(x) u(x)| w_1(x), \quad x\in [a, b],
\end{equation}
where 
\begin{equation}\label{Cauchytrans}
	u(x) = \int_c^d\,\frac{U(t)\, }{x-t}\,dt
\end{equation}
is semiclassical,  with $(a,b) \cap (c,d) = \emptyset$,  $U$ continuous and non-negative on $(c,d)$, and $\Pi$ is an arbitrary polynomial with real coefficients,  non-vanishing on $(a,b) \cup (c,d)$.
\end{itemize}
Under these assumptions the weight $w_2$ is also semiclassical. The fact that $(w_1, w_2)$ forms an AT-system for $n_1 \geq n_2 + m$ can be deduced from the fact that the linear form $p + q \Pi u$, for arbitrary polynomials $p, q$ of respective degrees $\leq n_1-1, n_2-1$, and $n_1 \geq n_2 + m$, has at most $n_1+n_2-1$ zeros in $[c,d]$ (see \cite{Nik82} and  \cite[p. 1022]{RHPNik}).

\begin{example}
It is easy to see that if $c<d$, then for $\gamma, \delta\notin \mathbb Z$, $\gamma+\delta<0$, $\gamma+\delta\in \mathbb Z$, function
$$
u(x)=(x-c)^\gamma (d-x)^\delta
$$
can be represented as the Cauchy integral \eqref{Cauchytrans}. As a consequence, a  pair of weights 
$$
w_1(x)= |x-a|^{\alpha  }\,|x-b|^{\beta  }, \quad w_2(x)= |x-a|^{\alpha  }\,|x-b|^{\beta  } |x-c|^{\gamma  }\,|x-d|^{\delta  }, \quad x \in (a, b), 
$$
where $(a,b) \cap (c,d) = \emptyset$, and $\alpha, \beta, \gamma, \delta>-1$,  $\gamma, \delta\notin \mathbb Z$, $\gamma+\delta\in \mathbb Z$, constitute an example 
of a system defined above.

Since we do not assume that the intervals $(a,b)$ and $(c,d)$ are bounded, another example is the pair of weights of the form 
$$
w_1(x) = \exp (- x^{r}),   \quad w_2(x)= |x-a|^{\gamma  }\,|x-b|^{\delta}\exp (- x^{r}), \quad x\in [0,+\infty),  
$$
with $r\in \N$, $-\infty<a<b<0$, $ \gamma, \delta>-1$,  $\gamma, \delta\notin \mathbb Z$, and $\gamma+\delta\in \mathbb Z$.
\end{example}

With the introduction of the polynomial factor $\Pi$ in the weight $w_2$ we can no longer guarantee that all the zeros of $P_{\bm n}$ are in $(a,b)$; however, the following result still holds:
\begin{prop}\label{thm:zerosNik}
	Under the conditions on the weights $w_1$ and $w_2$ stated above, the Hermite--Pad\'e polynomial $P_{\bm n}$, satisfying \eqref{defHP},  has at least $n_1+\ell+1$ sign changes on $(a,b)$, where
	$$\ell = \min (n_2-1,n_1-m), \quad m=\deg(\Pi),
	$$
	while its Cauchy transform $\mathfrak C_{w_1}[P_{\bm n}]$ has at least $\ell+1$  sign changes in $(c,d)$.
\end{prop}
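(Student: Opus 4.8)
The plan is to handle the two intervals separately: the orthogonality against $w_1$ accounts for $n_1$ sign changes of $P_{\bm n}$ on $(a,b)$, while the orthogonality against $w_2$ is transferred to $(c,d)$ through the Markov representation \eqref{Cauchytrans} of $u$, and there it produces the remaining sign changes. Two preliminary observations drive everything. First, since $\Pi$ does not vanish on $(a,b)$ and $u$ is the Cauchy transform of a nonnegative density supported on $(c,d)$, disjoint from $(a,b)$, both $\Pi$ and $u$ keep a constant sign on $(a,b)$; hence by \eqref{defW2} we may write $w_2=\epsilon\,\Pi\, u\, w_1$ on $(a,b)$ for a fixed $\epsilon\in\{-1,+1\}$. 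Second, I will repeatedly use the elementary sign-change lemma: a real continuous function that is orthogonal to all polynomials of degree $\le k$ against a nonnegative, not identically vanishing weight must change sign at least $k+1$ times in the interior of the interval.

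I would establish the statement about $(c,d)$ first, as it simultaneously pins down the threshold $\ell$. For $0\le j\le n_2-1$, expanding \eqref{defHP} for $w_2$ by the constant-sign identity above and applying Fubini gives
\[
0=\int_a^b x^jP_{\bm n}(x)\,w_2(x)\,dx=\epsilon\int_c^d U(t)\,\mathfrak C_{w_1}[x^j\Pi P_{\bm n}](t)\,dt .
\]
Whenever $j+m\le n_1$, the Cauchy identity \eqref{eq:CauchTrEq} (valid for a multiplier of degree $\le n_1$, the number of $w_1$-orthogonality conditions) yields $\mathfrak C_{w_1}[x^j\Pi P_{\bm n}]=x^j\Pi\,\mathfrak C_{w_1}[P_{\bm n}]$. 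Writing $h:=\Pi\,\mathfrak C_{w_1}[P_{\bm n}]$, this means $\int_c^d t^j U(t)\,h(t)\,dt=0$ precisely for $0\le j\le \ell=\min(n_2-1,\,n_1-m)$, the two constraints $j\le n_2-1$ and $j\le n_1-m$ being exactly what bound the range. By the sign-change lemma, $h$ changes sign at least $\ell+1$ times on $(c,d)$, and since $\Pi\neq0$ there, so does $\mathfrak C_{w_1}[P_{\bm n}]$. (Note $h\not\equiv0$ because $\mathfrak C_{w_1}[P_{\bm n}]\not\equiv0$, e.g.\ from its decay $\sim -m_{n_1}/t^{\,n_1+1}$.) This proves the second assertion.

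For the count on $(a,b)$, the orthogonality against $w_1$ alone forces $P_{\bm n}$ to change sign at least $n_1$ times. To gain the additional $\ell+1$, I would argue by contradiction in the standard AT-system fashion: assume $P_{\bm n}$ has only $s\le n_1+\ell\,(\le N-1)$ sign changes at $x_1<\dots<x_s$, and use the $s\le N-1$ homogeneous interpolation conditions $\Lambda(x_i)=0$ to select a nontrivial combination $\Lambda=p\,w_1+q\,w_2$ with $\deg p\le n_1-1$, $\deg q\le n_2-1$. On $(a,b)$ one has $\Lambda=w_1\,(p+\epsilon q\,\Pi u)$, so that, invoking the Nikishin/AT zero-counting bound for the linear form $p+\epsilon q\,\Pi u$ (the mechanism of \cite{Nik82} and \cite[p.~1022]{RHPNik}), $\Lambda$ cannot acquire extra sign changes and therefore $\Lambda\,P_{\bm n}$ keeps a constant sign; then $\int_a^b \Lambda\,P_{\bm n}\,dx\neq0$, contradicting \eqref{defHP}. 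This contradiction upgrades the bound to $n_1+\ell+1$.

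The main obstacle is exactly this last zero-counting step: controlling the number of zeros of the dual combination $p+\epsilon q\,\Pi u$ on $(a,b)$ when the polynomial factor $\Pi$ of degree $m$ is present. This is where the regime $n_1\ge n_2+m$ and the threshold $\ell=\min(n_2-1,\,n_1-m)$ genuinely enter, and it is handled by the very same Markov/Fubini reduction used above — transferring the would-be extra zeros on $(a,b)$ to $(c,d)$, where the sign of $U$ is definite. The delicate bookkeeping is the boundary case producing the final ``$+1$'', namely distinguishing $\ell=n_2-1$ (where all $N$ sign changes are recovered, as in the classical Nikishin case) from $\ell=n_1-m$ (where the $\Pi$-factor legitimately allows some zeros of $P_{\bm n}$ to escape $(a,b)$); I would verify that the degree count in the interpolation argument matches $\ell$ in both regimes.
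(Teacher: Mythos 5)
Your treatment of the second assertion (the $\ell+1$ sign changes of $\mathfrak C_{w_1}[P_{\bm n}]$ on $(c,d)$) is correct and is essentially the paper's argument: multiply by $x^j\Pi$, use \eqref{eq:CauchTrEq} under the constraint $j+m\le n_1$, transfer the $w_2$-orthogonality to $(c,d)$ by Fubini, and invoke the sign-change lemma against the nonnegative density $U$; the two constraints $j\le n_2-1$ and $j\le n_1-m$ produce exactly the threshold $\ell$.

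The first assertion is where your proposal has a genuine gap. Your contradiction argument hinges on the claim that the dual form $\Lambda=p\,w_1+q\,w_2=w_1\,(p+\epsilon q\,\Pi u)$, chosen to vanish at the $s\le n_1+\ell$ sign changes of $P_{\bm n}$, \emph{changes sign only there}. That is precisely the Chebyshev/AT property of the system \eqref{system}, and — as the paper itself notes just before the proposition — it is only available when $n_1\ge n_2+m$. In that regime $\ell=n_2-1$ and your argument would indeed deliver the full $N=n_1+\ell+1$ sign changes. But the proposition is stated for arbitrary $(n_1,n_2)$, and in the complementary regime $n_1<n_2+m-1$ (where $\ell=n_1-m$) the zero-counting bound for $p+\epsilon q\,\Pi u$ that you invoke is exactly what is \emph{not} known; you flag this yourself as ``the main obstacle'' and the suggestion that it is ``handled by the very same Markov/Fubini reduction'' is not carried out and does not obviously work, since the extra zeros of $q\Pi u$ created by $\Pi$ on $(a,b)$ cannot be transferred to $(c,d)$ by the same mechanism. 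The paper avoids AT-system machinery altogether for this part: it uses the $\ell+1$ zeros of $\mathfrak C_{w_1}[P_{\bm n}]$ in $(c,d)$ already produced in the second assertion to build a polynomial $P$, of degree at least $\ell+1$ and non-vanishing on $(a,b)$, such that $\mathfrak C_{w_1}[P_{\bm n}]/P$ is analytic in $\C\setminus[a,b]$ and, by \eqref{eq:asymptCauchy}, decays like $z^{-n_1-\ell-2}$ at infinity. A contour integral of $z^k\,\mathfrak C_{w_1}[P_{\bm n}]/P$ around $[a,b]$ then shows that $P_{\bm n}$ is quasi-orthogonal of order $n_1+\ell$ with respect to the constant-sign weight $w_1/P$ on $(a,b)$, which immediately yields the $n_1+\ell+1$ sign changes in all regimes. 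To repair your proof, replace the AT-interpolation step by this division-plus-contour-integral argument (or supply an independent proof of the zero bound for $p+\epsilon q\,\Pi u$ valid when $n_1<n_2+m$).
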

\begin{proof}
	We basically follow the arguments in  \cite[Sec. 2.5]{VanAssche06}. Suppose that $n_1\geq m$. Taking in \eqref{eq:CauchTrEq} $T  = P \Pi $, where $P$ is and arbitrary polynomial of degree $k\leq n_1-m$, we get that
	$$
	P(x)\, \Pi(x) \,\int_a^b\,\frac{P_{\bm n}(t)}{t-x}\,w_1(t) dt =  \int_a^b\,\frac{P(t)\,P_{\bm n}(t)\Pi(t)}{t-x}\,w_1(t) dt.
	$$ 
	Integrating this identity with respect to $d\sigma$ in $[c,d]$, applying Fubini's theorem and using \eqref{Cauchytrans} yields 
	$$
	\int_c^d\,P(x)\,\mathfrak C_{w_1}[P_{\bm n}](x)\,\Pi(x) v(x) \,d x = \,\int_a^b\,P(t)\,P_{\bm n}(t)\,w_2(t) dt\,= 0\,,
	$$
	as long as the degree $k$ of $P$ is $\leq n_2-1$. Since polynomial $\Pi(x)$ does not vanish in $(c,d)$ it proves that $\mathfrak C_{w_1}[P_{\bm n}]$ changes sign at least $\ell + 1$ times in $(c,d)$.
	
	To prove the first part of the proposition, let $P$ be a polynomial non vanishing on $(a,b)$ and such that $\mathfrak C_{w_1}[P_{\bm n}]/P$  is analytic in $\C \setminus [a,b]$. By the assertion we just proved, we can take $P$ of degree at least $\ell+1$, so that by   \eqref{eq:asymptCauchy},  
	$$
	\frac{\mathfrak C_{w_1}}{P}(z)=\mathcal O \left(\frac{1}{z^{n_1+\ell+2}}\right), \quad z\rightarrow \infty.
	$$ 
	
Let  $\Gamma$ be a  positively oriented Jordan contour encircling $[a,b]$ and leaving $[c,d]$ in its exterior. Then for $k=0, 1, \dots,  n_1+\ell$,
		\begin{equation}\label{fubini}
		\begin{split}
		0=	\frac{1}{2\pi i}\,\oint_{\Gamma}\,z^k\,\frac{\mathfrak C_{w_1}[P_{\bm n}](z)}{P(z)}\,dx & = \frac{1}{2\pi i}\,\oint_{\Gamma}\,\frac{z^k}{P(z)}\,\left( \int_a^b\,\frac{P_{\bm n}(t) }{t-z}\,w_1(t) dt\right) \,dz \\
			& = \int_{a}^b \, P_{\bm n}(t)  \left(  \frac{1}{2\pi i}\,  \oint_\Gamma \, \frac{z^k}{P(z)}\, \frac{1}{t-z}\,dz \right) w_1(t)  \,dt \\
			&  = \int_a^b\,t^k\,P_{\bm n}(t)\,\frac{w_1(t) dt}{P(t)}, 
		\end{split}
	\end{equation}
where we have used Fubini's and Cauchy's theorems. Since  $w_1/P$ has a constant sign on $(a,b)$,  $P_{\bm n}$ satisfies  quasi--orthogonality conditions (of order at least $n_1+\ell$) there. Standard arguments yield that $P_{\bm n}$ has at least $n_1+\ell+1$ sign changes on $(a,b)$.
\end{proof}

Consider the case when $n_2\leq n_1-m+1$, so that $\ell =n_2-1$. According to Proposition~\ref{thm:zerosNik}, $P_{\bm n}$ has exactly $N=n_1+n_2$ zeros, all simple, in $(a,b)$, while $\mathfrak C_{w_1}[P_{\bm n}]$ has $\ge n_2$ zeros in $(c,d)$, exactly as in the classical Nikishin setting ($m=0$).
Moreover, if these zeros of $\mathfrak C_{w_1}[P_{\bm n}]$ are disjoint with the zeros of $A_1 S_{\bm n, 1}$ then, by the second assertion of Proposition~\ref{propInterlacing1}, at least $\ell=n_2-1$ zeros of polynomial $S_{\bm n, 1}$ (out of a total of $\le n_2+\sigma_1$ of its zeros) interlace with those of $\mathfrak C_{w_1}[P_{\bm n}]$.

\subsubsection{Generalized Nikishin systems: case of overlapping supports}\label{sec:rakhmanov}

\

Generalized Nikishin systems (GN systems) were introduced in \cite{GRS97} using a rooted tree graph. A particular example of such a system, whose asymptotics was studied in \cite{MR2475084}, \cite{MR2656323} and \cite{MR2796829},  shares characteristics of both cases described in Sections~\ref{sec:angelesco} and \ref{sec:nikishin}. Namely, we assume that
\begin{equation} \label{mixedR1}
	\Delta_1 \subseteq \Delta_2,
\end{equation}
in addition to the Nikishin  relation between the non-negative semiclassical weights $w_1$ and $w_2$, given by conditions \eqref{defW2}--\eqref{Cauchytrans}, with $\Pi\equiv 1$, and the assumption that the interior of
$$
\Delta_3:=[c,d] 
$$
is disjoint with $\Delta_2$. On one hand, when $\Delta_1$  extends to the whole $ \Delta_2$, we obtain the classical Nikishin configuration of Section~\ref{sec:nikishin}.  On the other,  if $\Delta_1$ and $\Delta_2$ share an endpoint then redefinition of the support $\Delta_2$ into $\Delta_2\setminus\Delta_1 $ 
yields the Angelesco setting of Section~\ref{sec:angelesco}.

Let us study the diagonal case of $n_1=n_2=n$, so that $N=2n$. As before, the key ingredient is   the location of the zeros of the Hermite--Pad\'e polynomial $P_{\bm n}$ and its electrostatic partners $S_{\bm n,1}$ and  $S_{\bm n,2}$. It was proved in \cite{MR2796829} that \textit{for any $n$, the zeros of the Hermite--Pad\'e polynomial $P_{\bm n}$, with the possible exception of five of them, are in $\Delta_2$.} Recall that by orthogonality assumptions, at least $n$ of them belong to the subinterval $\Delta_1$. Additionally, we have:

\begin{prop}\label{prop:Rakhzeros}
		If $P_{\bm n}$ has $n+k_1\ge n$ sign changes in $\Delta_1$ and $k_2\ge 0$ sign changes in $\Delta_2 \setminus \Delta_1$,  then $S_{\bm n,1}$ has at least $\max \{k_2-2,0\}$ zeros in $\Delta_2 \setminus \Delta_1$, which interlace with the zeros of $P_{\bm n}$, and at least $\max\{k_1-3,0\}$ zeros in $\Delta_3$, interlacing with the zeros of $\mathfrak C_{w_1}[P_{\bm n}]$.
\end{prop}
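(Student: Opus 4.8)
The plan is to mirror the proofs of Propositions~\ref{cor:zerosS12ang} and \ref{thm:zerosNik}, splitting the line into the three relevant regions $\Delta_1$, $\Delta_2\setminus\Delta_1$ and $\Delta_3$. The two engines are the representation \eqref{S_inew}, which shows that, away from $\mathbb A_1$ and the zeros of $P_{\bm n}$, the zeros of $S_{\bm n,1}$ are exactly the critical points of
\[
g_1:=\frac{\mathfrak C_{w_1}[P_{\bm n}]}{w_1\,P_{\bm n}},
\]
and the interlacing Lemma~\ref{propInterlacing1}, whose first assertion turns consecutive zeros of $P_{\bm n}$ (poles of $g_1$) into interlacing zeros of $S_{\bm n,1}$, and whose second assertion does the same with consecutive zeros of $\mathfrak C_{w_1}[P_{\bm n}]$ (zeros of $g_1$). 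First I would record the factorization $P_{\bm n}=P_{\bm n,1}\,R$, with $P_{\bm n,1}$ collecting the $n+k_1$ zeros in $\Delta_1$, and note that, since the case in which $\Delta_1$ and $\Delta_2$ share an endpoint reduces to the Angelesco setting of Section~\ref{sec:angelesco}, in the genuine generalized Nikishin situation $\Delta_1$ is interior to $\Delta_2$, so that $\Delta_2\setminus\Delta_1$ has two components, one on each side of $\Delta_1$.

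For the first claim I would work on $\Delta_2\setminus\Delta_1$, where $w_1$ has no mass, so $g_1$ is meromorphic there with poles exactly at the $k_2$ zeros of $P_{\bm n}$. Between two consecutive such zeros inside a single component, $P_{\bm n}$ has constant sign, and provided $\mathfrak C_{w_1}[P_{\bm n}]$ does not change sign on that component, $g_1$ tends to infinities of the same sign at the two ends of the gap, hence has an interior extremum, i.e.\ a zero of $S_{\bm n,1}$ interlacing the zeros of $P_{\bm n}$. This is precisely the first assertion of Lemma~\ref{propInterlacing1}; applying it on each component and adding gives $\max\{k_2-2,0\}$ zeros, the deficit of two being the usual one-per-interval interlacing loss incurred on each of the two components. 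The single input to be supplied is the sign-definiteness of $\mathfrak C_{w_1}[P_{\bm n}]$ on each component of $\Delta_2\setminus\Delta_1$.

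For the second claim I would reproduce the Fubini computation of Proposition~\ref{thm:zerosNik}. Using the relation $w_2=|u|\,w_1$ with $u$ the Cauchy transform \eqref{Cauchytrans} of a non-negative density on $\Delta_3$, and \eqref{eq:CauchTrEq} (legitimate since the test polynomials have degree $\le n_2-1<n_1$), the multiple orthogonality \eqref{defHP} against $w_2$ becomes, after interchanging integrations, an orthogonality of $\mathfrak C_{w_1}[P_{\bm n}]$ against polynomials on $\Delta_3$ with respect to the positive measure from \eqref{Cauchytrans}. The new feature relative to Section~\ref{sec:nikishin} is a correction term supported on $\Delta_2\setminus\Delta_1$, not captured by the $w_1$–transform, which degrades the effective number of orthogonality conditions and reduces the naive count to order $k_1$; carrying this defect through should yield at least $k_1-2$ sign changes of $\mathfrak C_{w_1}[P_{\bm n}]$ in $\Delta_3$. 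Since $P_{\bm n}$ and $w_1$ do not vanish there, these are zeros of $g_1$, and Rolle's theorem (the second assertion of Lemma~\ref{propInterlacing1}) produces at least $(k_1-2)-1=k_1-3$ critical points of $g_1$ in $\Delta_3$ interlacing the zeros of $\mathfrak C_{w_1}[P_{\bm n}]$, that is, $\max\{k_1-3,0\}$ zeros of $S_{\bm n,1}$.

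I expect the main obstacle to be exactly the two facts about $\mathfrak C_{w_1}[P_{\bm n}]$ away from $\Delta_1$: its sign-definiteness on each component of $\Delta_2\setminus\Delta_1$, and the lower bound $k_1-2$ on its sign changes in $\Delta_3$. Unlike the Angelesco case, the boundary density $P_{\bm n}w_1$ is not of constant sign on $\Delta_1$ (it has $n+k_1$ sign changes), so the clean no-zeros-off-the-support argument of Proposition~\ref{cor:zerosS12ang} is unavailable. The way around this is a global zero-balance for $\mathfrak C_{w_1}[P_{\bm n}]$ on $\overline{\C}\setminus\Delta_1$: its decay $\mathcal O(z^{-n_1-1})$ at infinity from \eqref{eq:asymptCauchy}, together with the $n+k_1$ sign changes of its boundary density, bounds via an argument-principle/winding count the total number of its zeros outside $\Delta_1$ by roughly $k_1$, which simultaneously forces them into $\Delta_3$ and leaves the two components of $\Delta_2\setminus\Delta_1$ free of sign changes. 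The precise constants $2$ and $3$ then absorb the finitely many exceptional zeros allowed by \cite{MR2796829} and the junction effects at the endpoints of $\Delta_1$. Reconciling this global count with the Fubini orthogonality count, so that both the sign-definiteness and the $k_1-2$ estimate hold at once, is the delicate point of the argument.
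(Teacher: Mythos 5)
Your skeleton coincides with the paper's: both arguments reduce the statement to two facts about $\mathfrak C_{w_1}[P_{\bm n}]$ --- sign-definiteness on each of the (at most two) components of $\Delta_2\setminus\Delta_1$, and at least $k_1-2$ sign changes in $\Delta_3$ --- and then invoke the first and second assertions of Lemma~\ref{propInterlacing1}, respectively, losing one zero per interval of interlacing. But those two facts are the entire content of the proof, and you establish neither. For the first, the paper needs no argument-principle or winding-number count: it applies \eqref{eq:CauchTrEq} with $T(x)=\prod_{j=1}^{k_2}(x-y_j)$, the polynomial vanishing at the sign changes of $P_{\bm n}$ in $\Delta_2\setminus\Delta_1$ (admissible since $k_2\le n=n_1$), in the same spirit as Proposition~\ref{cor:zerosS12ang}. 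Your proposed global zero balance for $\mathfrak C_{w_1}[P_{\bm n}]$ on $\C\setminus\Delta_1$ is never carried out, and you yourself flag the reconciliation of that count with the orthogonality count as ``the delicate point''; as written this is a gap, not a proof.

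For the second fact the paper argues by contradiction: assuming only $k_3<k_1-2$ sign changes $z_j$ of $\mathfrak C_{w_1}[P_{\bm n}]$ in $\Delta_3$, it builds the single test polynomial $P(x)=(x-\zeta_1)^{\epsilon_1}(x-\zeta_2)^{\epsilon_2}\prod_{i=1}^{k_2}(x-y_i)\prod_{j=1}^{k_3}(x-z_j)$, with $\zeta_1\in\Delta_1$, $\zeta_2$ in the gap between $\Delta_2$ and $\Delta_3$, and $\epsilon_i\in\{0,1\}$, chosen so that simultaneously $P\,\mathfrak C_{w_1}[P_{\bm n}]$ times the positive density is nonnegative on $\Delta_3$ and $P\,P_{\bm n}\,w_2\le 0$ on $\Delta_2\setminus\Delta_1$; since $\deg P=k_2+k_3+2<k_1+k_2\le n$, the Fubini identity \eqref{Rakhorth} then forces both integrands to vanish, a contradiction. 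This simultaneous sign control of the two terms of \eqref{Rakhorth} --- precisely the ``correction term'' on $\Delta_2\setminus\Delta_1$ that you mention but do not handle --- is the missing idea in your sketch. It also explains the constants: the $-2$ in $k_2-2$ is the interlacing loss on the two components of $\Delta_2\setminus\Delta_1$, and the $-3$ in $k_1-3$ is the two auxiliary roots $\zeta_1,\zeta_2$ in the test polynomial plus one interlacing loss on $\Delta_3$; neither constant has anything to do with the five exceptional zeros of \cite{MR2796829} or with ``junction effects'', as you conjecture.
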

\begin{proof}
	Let us denote by $x_i$, $i=1,\ldots,n+k_1$ the points of sign change of $P_{\bm n}$  in $\Delta_1$, and by $y_j$, $j=1,\dots,k_2$ the corresponding points of sign change  in $\Delta_2 \setminus \Delta_1$. Using  \eqref{eq:CauchTrEq} with
	$$
	\Pi(x) =\prod_{j=1}^{k_2} (x-y_j),
	$$
	we conclude again that $\mathfrak C_{w_1}[P_{\bm n}]$ does not change sign in each component of $\Delta_2 \setminus \Delta_1$. Then, the first part of Lemma \ref{propInterlacing1} asserts that $S_{\bm n,1}$ has at least $k_2-2$ zeros in $\Delta_2 \setminus \Delta_1$, interlacing with those of $P_{\bm n}$ (observe that $\Delta_2 \setminus \Delta_1$ can have up to two disjoint components).
	
Notice that $k_1 + k_2 \le n$, so that if $k_2=n$, our proof is finished. Suppose that $k_2<n$, and let us see that $\mathfrak C_{w_1}[P_{\bm n}]$ has at least $k_2-2$ sign changes  in $\Delta_3$.
	From \eqref{defHP} and \eqref{defW2}--\eqref{Cauchytrans}, we have that for any polynomial $P \in \mathbb P_{n-1}$,
	\begin{equation}\label{Rakhorth}
		\begin{split}
			0 & = \int_{\Delta_1}\,P(x) P_{\bm n} w_1(x) u(x) dx + \int_{\Delta_2 \setminus \Delta_1}\,P(x) P_{\bm n} w_2(x) dx \\
			& = - \int_{\Delta_3}\,\pi(t) \mathfrak C_{w_1}[P_{\bm n}](t) \sigma (t) dt +  \int_{\Delta_2 \setminus \Delta_1}\,P(x) P_{\bm n} w_2(x) dx\,,
		\end{split}
	\end{equation}
	where we have used Fubini's theorem for the last identity. Now, denote by $z_i\,,\,i=1,\dots,k_3$ the points where $\mathfrak C_{w_1}[P_{\bm n}]$ changes sign in $\Delta_3$. Let us suppose that $k_3<k_1-2$ and define 
	$$
	P(x) := (x - \zeta_1)^{\epsilon_1} (x - \zeta_2)^{\epsilon_2} \prod_{i=1}^{k_2} (x-y_i) \prod_{j=1}^{k_3} (x-z_j)\,,
	$$ 
	where $\epsilon_i \in \{0,1\}\,,\,i=1,2\,,$ $\zeta_1 \in \Delta_1$ and $\zeta_2$ is located in the ``gap'' between $\Delta_2$ and $\Delta_3$ (which may consist of a single point, since  we only require  the interiors to be disjoint). We can use the parameters $\zeta_1, \zeta_2$ and $\epsilon_1, \epsilon_2$ to guarantee that
	$$P(t) \mathfrak C_{w_1}[P_{\bm n}](t) \sigma (t) \geq 0,\quad t\in \Delta_3$$ and
	$$P(x) P_{\bm n}(x) w_2(x) \leq 0,\quad  x\in \Delta_2.$$
	Since by assumption, $\deg(P) = k_2+k_3+2 < k_1+k_2 \le n$, so that  \eqref{Rakhorth} should hold for this particular choice of $P$. This is possible only if both integrands in the right hand side of \eqref{Rakhorth} were identically $0$, which is a contradiction. Hence, $k_3\geq k_1-2$, and it remains to use again the second assertion in Lemma \ref{propInterlacing1} to conclude the proof.
\end{proof}

Thus, as expected from an intermediate case between Angelesco and Nikishin settings, now a part of the zeros of the electrostatic partner $S_{\bm n,1}$ lie on $\Delta_3$ (as in the Nikishin case) while the rest are located in $\Delta_2 \setminus \Delta_1$ (Angelesco). Therefore, in this situation, part of the ``ghosts'' attractive charges interlace with part of the zeros of $P_{\bm n}$ in $\Delta_2 \setminus \Delta_1$, while other part are placed in $\Delta_3$. Of course, depending on the specific case we are dealing with, some of these sets of attractive charges may be empty.

\section{Asymptotic zero distribution}\label{sec:asymptotics}

It is instructive to observe the discrete-to-continuous transition of the electrostatic model described in the previous section, assuming that the total degree  $N=n_1+n_2\to \infty$. 

\subsection{Vector  critical measures}

  If $\mu_1$, $\mu_2$ are two  finite positive Borel measures with compact support  then their (continuous) mutual logarithmic energy is
\begin{equation}\label{defMutual}
	\langle \mu_1, \mu_2 \rangle  :=  \iint \log \frac{1}{|x-y|}\, d\mu_1(x) d\mu_2(y);
\end{equation}
 and the logarithmic energy of $\mu_i$ is
\begin{equation}\label{defEnergyContinuous}
	E(\mu_i) :=  \langle \mu_i, \mu_i \rangle, \quad i=1, 2. 
\end{equation}

Given a vector of measures $\vec \mu=(\mu_1, \dots, \mu_k)$, and a symmetric positive-definite \textit{interaction matrix}
$$
M=(m_{ij})_{i,j=1}^k,
$$
the  vector energy of $\vec \mu $ is
\begin{equation}\label{defWeightedEnergyCont}
	E_{  M}(\vec \mu  ):=
	\sum_{i,j=1}^k m_{ij} \langle \mu_i, \mu_j \rangle.
\end{equation}
In the particular case of $k=2$, when 
$$M =\begin{pmatrix}1&a\\a&1\end{pmatrix}, \quad -1<a<1, $$
we call $a$ the \textit{interaction parameter}.  

We define the critical vector measures following \cite{MR2770010} and \cite{MR3545949}. Recall that any smooth complex-valued function $h$ in the closure  $\overline \Omega $ of a domain $\Omega$ generates a local variation of $\Omega$ by $ z
\mapsto z^t=z+ t \, h(z)$, $t\in \C$. It is easy to see that $ z \mapsto z^t $ is injective for small
values of the parameter $t$. The transformation above induces a variation of sets $ e \mapsto
e^t := \{z^t:\, z \in e\}$, and  measures: $ \mu \mapsto
\mu^t$, defined by $\mu^t(e^t)=\mu(e) $; in the differential form, the pullback measure $\mu^t$
can be written as $d\mu^t (x^t)=d\mu(x)$. 	Recall also the notation, introduced in Section~\ref{sec:generalHermitePade},  of $\mathbb A_i$ for the set of zeros of $A_i$ on $\C$, $i=1,2$.
\begin{definition}[see \cite{MR3545949}]  \label{def:AcriticalCont}
	A vector measure $\vec \mu = (\mu_1, \dots, \mu_k)$ is a (continuous) \textit{critical vector  measure} if for any $h$ smooth in $\C\setminus (\mathbb A_1 \cup \mathbb A_2)$ such that $h\big|_{\mathbb A_1 \cup \mathbb A_2} \equiv 0$,
	\begin{equation}\label{derivativeEnergy}
		\frac{d}{dt}\, E_M (\vec \mu^t)\big|_{t=0} = \lim_{t\to 0} \frac{E_M (\vec \mu^t)- E_M (\vec \mu)}{t}=0.
	\end{equation}
\end{definition}
As in the discrete case, 
\begin{equation*}
		\mu_i \text{ is $F_i$-critical, with } F_i  :=\sum_{1\le j\le k,\, j\neq i} \frac{m_{ij}}{m_{ii}}\,  U^{\mu_j}  ,  \quad i\in \{ 1, \dots, k\},
\end{equation*}
which yields the following variational conditions on the components of $\vec \mu$: for $x\in \supp(\mu_i)$, with a possible exception of a subset of logarithmic capacity $0$, 
\begin{equation} \label{variationalCond}
 	U^{\mu_i}(x) + \sum_{1\le j\le k,\, j\neq i} \frac{m_{ij}}{m_{ii}}\,  U^{\mu_j}  = c_i =\const ,  \quad i\in \{ 1, \dots, k\}. 
\end{equation}

\subsection{Asymptotic electrostatic model: general case}

Under a general assumption that for each multi-index $\bm n=(n_1, n_2)\in \N^2$ the Hermite--Pad\'e polynomial $P_{\bm n}$ exists and is of degree $N=n_1+n_2$, as well as that $\deg (S_{\bm n, i})=N-n_i+1$, we consider the  zero-counting measures (see the definition in \eqref{defCountingMeaure}) for $P_{\bm n}$, $S_{\bm n,1}$ and $S_{\bm n,2}$ in the asymptotic regime
\begin{equation} \label{asymptoticRegime}
	N=n_1 + n_2 \to \infty, \quad \lim_{N\rightarrow \infty}\,\frac{n_2}{N}\,= t\in [0,1].
\end{equation}
Let us assume that (perhaps, along a subsequence of multi-indices), weak limits
\begin{equation} \label{munu}
\mu := \lim_{\bm n} \frac{1}{N}\,\nu(P_{\bm n}),\quad \nu_1 := \lim_{\bm n} \frac{1}{N}\,\nu(S_{\bm n,1}), \quad \nu_2 := \lim_{\bm n} \frac{1}{N}\,\nu(S_{\bm n,2}),
\end{equation}
exist. With our assumptions, 
\begin{equation} \label{weightconstraints1}
\|\mu\|:= \int d\mu  =1, \quad \|\nu_1\|:= \int d\nu_1 =t, \quad \|\nu_2\|:= \int d\nu_2 =1-t. 
\end{equation}
 
Since every weak-* limit of a sequence of discrete critical vector  measures is critical in the sense of Definition~\ref{def:AcriticalCont} (a proof for the scalar case can be found  in \cite[Theorem 7.1]{MR2770010}; it applies to the vector equilibrium without substantial modifications), we obtain: 
\begin{cor}\label{cor:asymptoticsGeneral}
	With the assumptions and notations above, each vector measure $(\mu,\nu_1)$ and $(\mu,\nu_2)$  is critical in the sense of Definition~\ref{def:AcriticalCont}, with the interaction parameter $ a=-1/2$ and constraints \eqref{weightconstraints1}. 
\end{cor}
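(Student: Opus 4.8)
The plan is to realize each of $(\mu,\nu_1)$ and $(\mu,\nu_2)$ as a weak-$*$ limit of rescaled discrete critical vector measures and then to invoke the stability of criticality under such limits. Fix $i\in\{1,2\}$. By Theorem~\ref{thm:vectorelectrostatics}, for every multi-index $\bm n$ for which $P_{\bm n}$ exists with the stated degrees, the discrete vector measure $\vec\nu_i=(\nu(P_{\bm n}),\nu(S_{\bm n,i}))$ is a critical vector measure, in the sense of Definition~\ref{defCriticalVector}, for the functional $\mathcal{E}_{\vec\varphi,a}$ with $a=-1/2$ and the external field $\vec\varphi$ displayed in \eqref{defVectorExternalField1}--\eqref{defVectorExternalField2}. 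Rescaling by $1/N$, the normalizations in \eqref{weightconstraints1} follow from the degree hypotheses $\deg P_{\bm n}=N$ and $\deg S_{\bm n,i}=N-n_i+1$ together with \eqref{asymptoticRegime}, so that along the chosen subsequence $\tfrac1N\nu(P_{\bm n})\to\mu$, $\tfrac1N\nu(S_{\bm n,1})\to\nu_1$ and $\tfrac1N\nu(S_{\bm n,2})\to\nu_2$ in the sense of \eqref{munu}.

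First I would write the discrete Euler--Lagrange (equilibrium) equations for $\vec\nu_i$. Denoting the zeros of $P_{\bm n}$ by $\zeta_j$ and those of $S_{\bm n,i}$ by $\xi_k$, criticality is the pair of systems coming from \eqref{VectorequilConditions}; for the first component it reads
\[
\sum_{j'\neq j}\frac{1}{\zeta_j-\zeta_{j'}}+a\sum_{k}\frac{1}{\zeta_j-\xi_k}=\Phi_1'(\zeta_j),
\]
where $\Phi_1$ is the analytic completion of $\varphi_1$ (and analogously on the second component). Dividing by $N$ and passing to the limit converts the two finite sums into Cauchy transforms of $\mu$ and $\nu_i$, while the right-hand side $\tfrac1N\Phi_1'(\zeta_j)\to0$: the part of $\vec\varphi$ built from $v_1,v_2,A_1,A_2$ is a fixed function, independent of $\bm n$, whose derivative is rational and bounded on compacta away from $\mathbb A_1\cup\mathbb A_2$, while the only $\bm n$-dependent contribution $\tfrac12\log|R_{\bm n}|$ satisfies $\tfrac1N\log|R_{\bm n}|\to0$ because, by Proposition~\ref{prop:WronskianS/w}, $\deg R_{\bm n}\le 2\sigma_1+2\sigma_2+3$ is bounded uniformly in $N$. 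In the limit one is left with $\int\tfrac{d\mu(t)}{z-t}+a\int\tfrac{d\nu_i(t)}{z-t}=0$ for $z\in\supp(\mu)$, which is the complex derivative of the variational conditions \eqref{variationalCond} of Definition~\ref{def:AcriticalCont} for the interaction matrix $M$ with $a=-1/2$ (the symmetric conditions on $\supp(\nu_i)$ arising from the second component).

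To make this rigorous rather than formal, I would follow the weak-$*$ stability argument of \cite[Theorem 7.1]{MR2770010}, proved there for scalar critical measures and extended here componentwise to the vector setting. Concretely, I would express discrete criticality of $\vec\nu_i$ through the vanishing of $\tfrac{d}{dt}\,\mathcal{E}_{\vec\varphi,a}(\vec\nu_i^{\,t})\big|_{t=0}$ for every admissible variation $z\mapsto z+t\,h(z)$ with $h$ smooth and $h\big|_{\mathbb A_1\cup\mathbb A_2}\equiv0$; rewrite this derivative, after normalizing by $1/N^2$, as a bounded bilinear expression in the rescaled measures paired against the fixed kernel $\bigl(h(x)-h(y)\bigr)/(x-y)$; and then pass to the limit using the weak-$*$ convergence \eqref{munu}. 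The external-field part contributes only a term of order $1/N$ after this normalization and therefore drops out, which is precisely why the limiting criticality is with respect to $E_M$ alone, the role of the field surviving solely through the constraint that $h$ vanish on $\mathbb A_1\cup\mathbb A_2$.

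The hard part will be this last interchange of limit and criticality. The delicate points are: controlling the measures near the fixed singular set $\mathbb A_1\cup\mathbb A_2$ and at infinity so that no mass is lost and the bilinear energy is weak-$*$ continuous against the Lipschitz kernel built from $h$; verifying that the contribution of the $\bm n$-dependent field $R_{\bm n}$ genuinely vanishes after rescaling, which rests on the uniform degree bound of Proposition~\ref{prop:WronskianS/w}; and checking that the componentwise adaptation of \cite[Theorem 7.1]{MR2770010} to the matrix $M$ introduces no new cross-term pathologies. Once these are settled, criticality of $(\mu,\nu_1)$ and $(\mu,\nu_2)$ for $E_M$ with $a=-1/2$ under the constraints \eqref{weightconstraints1} follows, completing the argument.
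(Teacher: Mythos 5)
Your proposal follows essentially the same route as the paper: discrete criticality of $(\nu(P_{\bm n}),\nu(S_{\bm n,i}))$ from Theorem~\ref{thm:vectorelectrostatics}, rescaling by $1/N$, and passage to the weak-$*$ limit via the stability argument of \cite[Theorem 7.1]{MR2770010} adapted to the vector setting, which is exactly the one-line justification the paper gives. Your additional observations --- that the external field contributes only at order $1/N$ after normalization and that the $\bm n$-dependence through $R_{\bm n}$ is harmless because its degree is uniformly bounded by Proposition~\ref{prop:WronskianS/w} --- are correct elaborations of points the paper leaves implicit.
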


It is worth also discussing a connection with a more traditional model involving 3-component vector measures. Define
\begin{equation} \label{lambdas}
\Omega=\overline{\{ x\in\R:\, \mu=\nu_1 \}}, \quad \lambda_1 :=\mu\big|_{\Omega}, \quad \lambda_2=\mu-\lambda_1, \quad \lambda_3= \nu_1-\lambda_1.
\end{equation}
Obviously,
\begin{equation} \label{weightconstraints2}
	\|\lambda_1\| +   \|\lambda_2\|= 1, \quad   \|\lambda_1\| +   \|\lambda_3\|= t.
\end{equation}
Variational conditions \eqref{variationalCond} for $(\mu, \nu_1)$ with the interaction matrix
$$ 
\begin{pmatrix}1&-1/2\\-1/2&1\end{pmatrix}
$$ 
imply that
\begin{equation*}  
	\begin{split}
		U^{\mu}(x) -\frac{1}{2}\,  U^{\nu_1}(x)  = c_1 =\const , &  \quad x\in \supp(\mu), \\ 
		U^{\nu_1}(x)  -\frac{1}{2}\, U^{\mu}(x)  = c_2 =\const ,  &  \quad x\in \supp(\nu_1),
	\end{split}
\end{equation*}
or equivalently,
\begin{equation*}  
	\begin{split}
	\frac{1}{2}\, 	U^{\lambda_1}(x) +U^{\lambda_2}(x) -\frac{1}{2}\,  U^{\lambda_3}(x)  = c_1 =\const , &  \quad x\in \supp(\mu)= \supp(\lambda_1) \cup \supp(\lambda_2), \\ 
		\frac{1}{2}\, U^{\lambda_1}(x) -\frac{1}{2}\, U^{\lambda_2}(x) + U^{\lambda_3}(x)  = c_2 =\const , &  \quad x\in  \supp(\nu_1) =  \supp(\lambda_1) \cup \supp(\lambda_3). \\ 
	\end{split}
\end{equation*}
Additionally, on $\supp(\lambda_1)$, where both identities hold, we have that
$$
	U^{\lambda_1}(x) +\frac{1}{2}\, U^{\lambda_2}(x) +\frac{1}{2}\,  U^{\lambda_3}(x)  = c_1 +c_2.
$$
\begin{cor}\label{cor:asymptotics3}
	With the assumptions and notations above,  $(\lambda_1, \lambda_2, \lambda_3)$ is a critical vector  measure satisfying the constraints  \eqref{weightconstraints2} and with  the interaction matrix
$$ 
\begin{pmatrix}
	1&1/2 & 1/2 \\1/2&1 & -1/2 \\
1/2 & -1/2 & 1
\end{pmatrix}.
$$
\end{cor}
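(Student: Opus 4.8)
The plan is to reduce the statement to Corollary~\ref{cor:asymptoticsGeneral} by means of a single algebraic identity between the two- and three-component energies, rather than by re-deriving the variational conditions (which, in any case, have already been written out immediately above the corollary). Write $M_2$ for the $2\times2$ interaction matrix with interaction parameter $a=-1/2$, and let $M_3$ denote the $3\times 3$ matrix displayed in the statement. The only structural facts about $\lambda_1,\lambda_2,\lambda_3$ that I would use are the linear relations furnished by the decomposition \eqref{lambdas}, namely $\mu=\lambda_1+\lambda_2$ and $\nu_1=\lambda_1+\lambda_3$.

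First I would record the relevant energy identity. Starting from the definition \eqref{defWeightedEnergyCont},
\begin{equation*}
E_{M_2}(\mu,\nu_1)=\langle\mu,\mu\rangle-\langle\mu,\nu_1\rangle+\langle\nu_1,\nu_1\rangle,
\end{equation*}
I substitute $\mu=\lambda_1+\lambda_2$ and $\nu_1=\lambda_1+\lambda_3$, expand by bilinearity and symmetry of the mutual energy $\langle\cdot,\cdot\rangle$ from \eqref{defMutual}, and collect terms to obtain
\begin{equation*}
E_{M_2}(\mu,\nu_1)=\langle\lambda_1,\lambda_1\rangle+\langle\lambda_2,\lambda_2\rangle+\langle\lambda_3,\lambda_3\rangle+\langle\lambda_1,\lambda_2\rangle+\langle\lambda_1,\lambda_3\rangle-\langle\lambda_2,\lambda_3\rangle=E_{M_3}(\lambda_1,\lambda_2,\lambda_3),
\end{equation*}
where the last equality is checked by matching coefficients entry by entry against \eqref{defWeightedEnergyCont} for $M_3$. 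This computation is purely formal: it uses nothing beyond the two linear relations and the bilinearity of $\langle\cdot,\cdot\rangle$, all energies being finite because the measures are compactly supported with finite logarithmic energy.

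The key step is to transport this identity along a local variation. For $h$ admissible in the sense of Definition~\ref{def:AcriticalCont}, the map $z\mapsto z^t=z+t\,h(z)$ induces pushforwards $\lambda_i^t$, and since pushforward under a fixed map is additive on measures, the relations persist as $\mu^t=\lambda_1^t+\lambda_2^t$ and $\nu_1^t=\lambda_1^t+\lambda_3^t$. Because the identity above was a formal consequence of exactly these relations, it holds verbatim for the varied measures, giving $E_{M_2}(\mu^t,\nu_1^t)=E_{M_3}(\lambda_1^t,\lambda_2^t,\lambda_3^t)$ for all small $t$. Differentiating at $t=0$ and invoking Corollary~\ref{cor:asymptoticsGeneral}, which asserts that $(\mu,\nu_1)$ is critical for $M_2$ so that the left-hand derivative vanishes for every admissible $h$, I conclude that $\frac{d}{dt}E_{M_3}(\lambda_1^t,\lambda_2^t,\lambda_3^t)\big|_{t=0}=0$ for every such $h$; this is precisely criticality of $(\lambda_1,\lambda_2,\lambda_3)$ for $M_3$ in the sense of \eqref{derivativeEnergy}. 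The mass constraints \eqref{weightconstraints2} then follow at once from $\|\mu\|=\|\lambda_1\|+\|\lambda_2\|=1$ and $\|\nu_1\|=\|\lambda_1\|+\|\lambda_3\|=t$ in \eqref{weightconstraints1}.

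The subtlety I expect to require the most care is the transport step: one must ensure that the splitting of $\mu$ and $\nu_1$ into the $\lambda_i$ is frozen at $t=0$ and that the three components are then varied independently by the common $h$, rather than re-evaluating the $t$-dependent coincidence set $\Omega$ from \eqref{lambdas} after the variation. Additivity of the pushforward is exactly what legitimizes this. It is also what makes the energy-identity route preferable to arguing through the variational conditions \eqref{variationalCond}: the matrix $M_3$ is only positive \emph{semi}definite (its determinant vanishes, with null direction $(-1,1,1)$), so the convexity and uniqueness arguments that would let one read off criticality directly from \eqref{variationalCond} are unavailable, whereas the identity transfers criticality from $(\mu,\nu_1)$ to $(\lambda_1,\lambda_2,\lambda_3)$ unconditionally.
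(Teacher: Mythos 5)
Your proof is correct, but it takes a genuinely different route from the paper. The paper does not argue through the energy functional at all: it writes out the variational conditions \eqref{variationalCond} for the critical pair $(\mu,\nu_1)$ with interaction parameter $-1/2$, substitutes $\mu=\lambda_1+\lambda_2$ and $\nu_1=\lambda_1+\lambda_3$ into the potentials, and observes that the resulting identities on $\supp(\lambda_2)$, $\supp(\lambda_3)$ and (by adding the two) on $\supp(\lambda_1)$ are exactly the variational conditions associated with the displayed $3\times3$ matrix; the corollary is then read off from that. Your argument instead proves the exact identity $E_{M_2}(\mu,\nu_1)=E_{M_3}(\lambda_1,\lambda_2,\lambda_3)$ (which I have checked: the coefficients match), notes that the additivity of the pushforward preserves the linear relations under the variation $z\mapsto z+t\,h(z)$, and differentiates at $t=0$. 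What your route buys is that it works directly with Definition~\ref{def:AcriticalCont}: you never need the (unproved in the paper) converse implication that satisfying the potential-theoretic conditions \eqref{variationalCond} implies criticality in the sense of \eqref{derivativeEnergy}, a point on which the paper is somewhat informal. Your observation that $M_3$ is singular with null direction $(-1,1,1)$ is also apt --- it explains why $E_{M_3}$ can only ever see the combinations $\lambda_1+\lambda_2$ and $\lambda_1+\lambda_3$, which is the structural reason the reduction works. What the paper's route buys is the explicit constants $c_1$, $c_2$, $c_1+c_2$ on the three supports, which it uses in the subsequent discussion of the Angelesco and Nikishin degenerations; your proof establishes criticality without producing those identities, so if you wanted them you would still have to derive \eqref{variationalCond} afterwards. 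Both arguments rely on the same two facts, the decomposition \eqref{lambdas} and Corollary~\ref{cor:asymptoticsGeneral}, so neither is more general than the other.
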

This electrostatic model has  been used to describe the asymptotics of the zeros of Hermite--Pad\'e polynomials in several situations, see e.g. \cite{MR2475084, ALT2011,  MR3545949, MR3939592, AMFS21, MR2796829}. In particular, a spectral curve for such critical measures was derived in \cite{MR3545949}, and it was shown that $\lambda_i$'s are supported on a finite number of analytic arcs, that are trajectories of a quadratic differential globally defined on a three-sheeted Riemann surface.

\subsection{Asymptotic electrostatic model: Angelesco case} \label{sec:AsymptAngelesco}

Using the notation \eqref{munu} and Proposition~\ref{cor:zerosS12ang}, in this case 
$$
\supp(\mu) \subseteq \Delta_1 \cup \Delta_2, \quad \supp(\nu_1)\subseteq \Delta_2, \quad \supp(\nu_2)\subseteq \Delta_1, 
$$
and
$$
\nu_1 = \mu\big|_{\Delta_2}, \quad \nu_2 = \mu\big|_{\Delta_1},
$$
so that in notation  \eqref{lambdas}, 
$$
\lambda_1 = \nu_1, \quad \lambda_2 = \nu_2, \quad \lambda_3=0.
$$
Thus, in this case the electrostatic vector  model of Corollary~\ref{cor:asymptotics3} reduces to a $2\times 2$ equilibrium for $(\nu_1, \nu_2)$, with the interaction matrix
$$
M =\begin{pmatrix}
	1 & 1/2 \\
	1/2 & 1
\end{pmatrix} 
$$
and constraints 
$$
\supp (\nu_1) \subset \Delta_1, \quad \|\nu_1\|=t, \qquad \text{and} \qquad \supp (\nu_2) \subset \Delta_2, \quad  \|\nu_2\|=1-t.
$$
This is already classical, see e.g.~\cite[Ch. 5]{niksor};  actually, a stronger result is valid: the vector measure $(\nu_1,\nu_2)$ is a \textit{global minimum} for \eqref{def:EnergiaVect} and $a=1/2$. This does not follow directly from our electrostatic model.

Notice that measure $\nu=\nu_1 + \nu_2$ is the limit zero distribution of both $P_{\bm n}$ and $S_{\bm n,1}S_{\bm n,2}$. 

\begin{remark}
Although in the Angelesco case the zeros of  $P_{\bm n}$ are confined in $\Delta_1 \cup \Delta_2$, in principle up to $\sigma_i+1$ of $S_{\bm n, i}$, $i=1, 2$, are out of our control. Same happens with (a bounded number of) zeros of the polynomials $R_{\bm n}$. In order to guarantee weak convergence of the zero-counting measures, it is sufficient to impose an additional assumption: that \textit{the zeros of $S_{\bm n, 1}$, $S_{\bm n, 2}$ and $R_{\bm n}$ are uniformly bounded along the double sequence $(n_1, n_2)$}. 
\end{remark}

\subsection{Asymptotic electrostatic model: Nikishin case} \label{sec:AsymptNikishin}. 

Consider the generalized Nikishin system as described in Section~\ref{sec:nikishin}, in the asymptotic regime \eqref{asymptoticRegime} and with the additional assumption that
\begin{equation} \label{asymptoticRegimeNikishin1}
	n_2\leq n_1-m+1, 
\end{equation}
so that
\begin{equation} \label{asymptoticRegimeNikishin2}
t = \lim_{N\rightarrow \infty}\,\frac{n_2}{N} \in [0,1/2].
\end{equation}
As we have seen, all $N$ zeros of $P_{\bm n}$ live on $[a,b]$; according to Theorem~\ref{thm:vectorelectrostatics}, each one of them, endowed with a charge $+1$, interacts with zeros of $S_{\bm n, 1}$, each one with charge $-1/2$. We have seen that the majority of them (at least $\ell=n_2-1$ out of  $ n_2+\sigma_1$ possible) belongs to $[c,d]$. Imposing the same additional assumption that before, that \textit{the zeros of both  $S_{\bm n, 1}$ and $R_{\bm n}$ are uniformly bounded along the double sequence $(n_1, n_2)$}, we can use again the weak-* compactness of measures.

With the notation \eqref{munu}, 
$$
\supp(\mu) \subseteq \Delta_1 =[a,b], \quad \supp(\nu_1)\subseteq \Delta_2=[c,d],
$$
so that in notation  \eqref{lambdas}, 
$$
\lambda_1 = 0, \quad \lambda_2 = \mu, \quad \lambda_3=\nu_1.
$$
Thus, in this case the  electrostatic vector model of Corollary~\ref{cor:asymptotics3} reduces to a $2\times 2$ equilibrium for $(\mu, \nu_1)$, with the interaction matrix
$$
M =\begin{pmatrix}
	1 & -1/2 \\
	-1/2 & 1
\end{pmatrix} 
$$
and constraints 
$$
\supp (\mu) \subset \Delta_1, \quad \|\mu\|=1, \qquad \text{and} \qquad \supp (\nu_1) \subset \Delta_2, \quad  \|\nu_2\|=1-t.
$$
This model was initially put forward by Nikishin himself in \cite{Nikishin86}. Again, a stronger result is valid: the vector measure $(\mu,\nu_1)$ is a \textit{global minimum} for the vector energy, which does not follow directly from our electrostatic model.

Observe that the roles of the weights $w_1$ and $w_2$ in a Nikishin system are not symmetric, or at least the symmetry is not immediate. An argument that allows to swap $w_1$ and $w_2$, and thus break the barrier of  $n_2\leq n_1$, is based on the fact (see e.g.~\cite[Lemma 6.3.5]{StTo}) that if $u$ is a Markov function \eqref{Cauchytrans}, then 
$$\frac{1}{u(x)} = r(x) - \int\,\frac{d\tau (t)}{x-t},$$
where $r$ is a polynomial of degree $\leq 1$ and $\tau$ is a positive measure on $[c,d]$.  Standard arguments allow to extend the previous result when  $m=0$  (the classical Nikishin case), see e.g.  \cite{RHPNik}. Unfortunately, the presence of a non-trivial polynomial factor $\Pi$ in  \eqref{defW2} prevents these arguments from going through. Thus, if $m=0$, in the asymptotic regime \eqref{asymptoticRegime} we can drop the restriction \eqref{asymptoticRegimeNikishin1}, and the electrostatic model discussed above is still valid. Another interesting result that sheds light on the roles of $w_1$ and $w_2$, allowing to connect the situations of $n_1\le n_2$ and $n_1\ge n_2$, appears in \cite{zbMATH07308461}. 

\subsection{Asymptotic electrostatic model:  case of the overlapping supports}

Here we consider the asymptotic electrostatics for the intermediate case studied in \cite{MR2475084,  MR2796829} and partially analyzed in Section~\ref{sec:rakhmanov}. Recall that here we consider $n_1=n_2=n$, and thus, we are interested in what happens as $n\rightarrow \infty$.

With our notation in the current section, we have that $\supp \mu \subseteq \Delta_2$ and $\supp \nu_1 \subseteq (\Delta_2 \setminus \Delta_1) \cup \Delta_3$, in such a way that now we have,
$$\supp \lambda_1 \subseteq \Delta_2 \setminus \Delta_1\,,\, \supp \lambda_2 \subseteq \Delta_1\,,\, \supp \lambda_3 \subseteq \Delta_3\,,$$
The results in Proposition \ref{prop:Rakhzeros} guarantee that
$$\|\lambda_1\| + \|\lambda_2\| = 1\,,\; \|\lambda_2\| = \|\lambda_3\| + \,\frac{1}{2}\,. $$
Since in this intermediate case none of the measures $\lambda_i$ becomes null, the $3\times 3$ interaction matrix in Corollary 6.3 does not reduce to a $2\times 2$ one, as in the previous (extremal) cases. As for the constraints on the size of the measures, in this case we have 
$$\|\lambda_1\| = \,\frac{1}{2} - \theta \,,\; \|\lambda_2\| = \,\frac{1}{2} + \theta\,,\; \|\lambda_3\| = \theta\,,$$
where $\theta \in [0, 1/2]$ is a parameter which depends on the relative sizes and mutual positions of the three intervals $\Delta_i\,,\,i=1,2,3$; but especially on the first two ones, as asserted by the author of \cite{MR2796829}.

Moreover, as in the previous cases, for this ``critical'' value of the parameter $\theta$, a stronger result is valid. The vector measure $(\lambda_1,\lambda_2, \lambda_3)$ described above is a global minimum of the vector energy (see \cite{MR2796829}); but, again, this result does not follow directly from our electrostatic approach.

\section{Differential equation of order 3} \label{sec:ODE3}

The system of second order linear differential equations in Theorems \ref{TeoClaveHP} and \ref{thm:WronskianS/w} allowed us to derive an electrostatic interpretation of the zeros of the Hermite--Pad\'e polynomials of type II.  In  this section, we show how these equations can be combined into a single third order homogeneous differential equation, satisfied simultaneously  by the polynomial $P_{\bm n}$ and by the functions of the second kind $q_{\bm n,1}$ and $q_{\bm n,2}$. Notice that if we only cared about a third order ODE solved by $P_{\bm n}$, it would be sufficient to differentiate  one of the equations \eqref{odeHP}; thus, it is convenient to stress here that  we seek   equations whose basis of solutions is precisely $( P_{\bm n} , q_{\bm n,1}, q_{\bm n,2})$.

As it was mentioned in the introduction, third order homogeneous linear ODE whose solutions are $P_{\bm n}$  have been already described in the literature. For instance, such an equation was found  in \cite{kaliaguine:1996} for the Jacobi-Angelesco multiple orthogonal polynomials, see Section~\ref{sec:AngelescoJacobi}. In \cite{Aptekarev:97}, Aptekarev et al.~considered the case of a semiclassical weight $w$ of class $\sigma$ and multiple orthogonal polynomials  $P_{\bm n}$ of type II with respect to $w$ and $\sigma +1$ non-homotopic paths of integration, showing for instance that  in the diagonal setting $\bm n=(n,n,...,n)$, $P_{\bm n}$ satisfies a linear differential equation of order $\sigma+2$.  A kind of opposite case, when distinct classical ($\sigma=0$) weights $w_j$ are supported on the same contour $\Gamma$, was analyzed in \cite{AptBraVA}, where again  a linear ODE of order  $r+1$,  where $r$ is the number of weights, was derived.

In a clear resemblance to the definition of the polynomial $R_{\bm n}$ in \eqref{defDiffOperators}--\eqref{defRpolyn}, let 
\begin{equation}\label{defEFalt2}
	E_{\bm n}:=A_1^2A_2^2v_1 v_2 \, 	 
	\det\begin{pmatrix}P_{\bm n}&q_{\bm n,1}&q_{\bm n ,2}\\P_{\bm n}''&q_{\bm n,1}''&q_{\bm n ,2}''\\
		P_{\bm n}'''&q_{\bm n,1}'''&q_{\bm n ,2}'''\end{pmatrix}\,,\quad
	F_{\bm n}:=A_1^2A_2^2v_1 v_2\, 
	\det\begin{pmatrix}P_{\bm n}'&q_{\bm n,1}'&q_{\bm n ,2}'\\P_{\bm n}''&q_{\bm n,1}''&q_{\bm n ,2}''\\
		P_{\bm n}'''&q_{\bm n,1}'''&q_{\bm n ,2}'''\end{pmatrix}.
\end{equation}
In this section we maintain the assumption \eqref{mainconditionwronks}.

\begin{thm} \label{thmODER}
	\begin{enumerate}[a)]
	\item 	Functions $E_{\bm n}$ and $F_{\bm n}$, defined above, are polynomials, with
	\begin{align*}
	\deg(E_{\bm n}) & \leq \max\{\deg(A_1)+\sigma_2+\deg(R_{\bm n}),\deg(A_2)+\sigma_1+\deg(R_{\bm n}),\deg(B_1)+\deg(B_2)+\deg(R_{\bm n})\}, \\ 
	\deg(F_{\bm n}) & \leq \sigma_1+\sigma_2+\deg(R_{\bm n})+1,
	\end{align*}
and $\sigma_i$ defined in \eqref{def:classHP}.

\item $P_{\bm n}$, $q_{\bm n,1}$ and $q_{\bm n,2}$ are solutions of the linear differential equation with polynomial coefficients
\begin{equation}\label{ode3}
	A_1A_2R_{\bm n}y'''+\left[ (A_1(2A_2'+B_2)+A_2(2A_1'+B_1))R_{\bm n}-A_1A_2R_{\bm n}' \right] y''+E_{\bm n}y'+F_{\bm n}y=0.
\end{equation}

\item 
	In the particular case when $A_1=A_2=A$, $B_1=B_2=B$ (so that $w_1=w_2$), and $\sigma=1$, $n_1=n_2$, the differential equation \eqref{ode3} reduces to
\begin{equation} \label{ode3bis}
	A^2y'''+2A(A'+B)y''+E_{\bm n}^*\, y'+F_{\bm n}^*\, y=0,
\end{equation}
	where $E_{\bm n}^*$ and $F_{\bm n}^*$ are polynomials of degree at most $4$ and $3$, respectively.
		\end{enumerate}
\end{thm}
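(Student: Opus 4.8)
The plan is to prove part (b) first, since parts (a) and (c) then follow by tracking degrees and specializing. The starting point is that, under the nondegeneracy assumption \eqref{mainconditionwronks}, the three functions $P_{\bm n}$, $q_{\bm n,1}$, $q_{\bm n,2}$ are linearly independent, and hence the determinant
\[
\det\begin{pmatrix} P_{\bm n} & q_{\bm n,1} & q_{\bm n,2} & y \\ P_{\bm n}' & q_{\bm n,1}' & q_{\bm n,2}' & y' \\ P_{\bm n}'' & q_{\bm n,1}'' & q_{\bm n,2}'' & y'' \\ P_{\bm n}''' & q_{\bm n,1}''' & q_{\bm n,2}''' & y''' \end{pmatrix}
\]
vanishes identically whenever $y$ is any of $P_{\bm n}$, $q_{\bm n,1}$, $q_{\bm n,2}$ (two coincident columns). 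Expanding along the last column produces a third-order linear relation in $y$ whose coefficients are exactly the $3\times 3$ minors appearing in \eqref{defEFalt2}: the coefficient of $y'''$ is $\mathfrak{Wrons}[P_{\bm n},q_{\bm n,1},q_{\bm n,2}]$, the coefficient of $y''$ is its derivative, and the coefficients of $y'$ and $y$ are the minors defining $E_{\bm n}$ and $F_{\bm n}$. To reach the stated form \eqref{ode3} I would multiply through by $A_1^2A_2^2v_1v_2$ and use $R_{\bm n}=A_1A_2v_1v_2\,\mathfrak{Wrons}[P_{\bm n},q_{\bm n,1},q_{\bm n,2}]$ from \eqref{defRpolyn} together with the logarithmic-derivative identity $(A_1A_2v_1v_2)'/(A_1A_2v_1v_2)=(2A_1'+B_1)/A_1+(2A_2'+B_2)/A_2$, which comes from $v_i=A_iw_i$ and \eqref{ratWHP}; this converts the Wronskian-derivative term into the combination $(A_1(2A_2'+B_2)+A_2(2A_1'+B_1))R_{\bm n}-A_1A_2R_{\bm n}'$ and leaves $E_{\bm n}$, $F_{\bm n}$ as the remaining coefficients.

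For part (a), the polynomiality of $E_{\bm n}$ and $F_{\bm n}$ would follow by the same Riemann--Hilbert reasoning used in the proofs of Theorem~\ref{lem1aux} and Proposition~\ref{prop:WronskianS/w}. Each minor is holomorphic off $\Delta_1\cup\Delta_2$; across $\Delta_i$ the boundary values of $q_{\bm n,i}$ differ by a multiple of $P_{\bm n}$ (Sokhotski--Plemelj), so the jump of the corresponding column is proportional to the $P_{\bm n}$-column and the determinant extends holomorphically by Morera. The prefactor $A_1^2A_2^2v_1v_2$ is chosen to cancel the branch-point and pole behaviour at $\mathbb A_1\cup\mathbb A_2$, making $E_{\bm n}$ and $F_{\bm n}$ entire, and Liouville's theorem together with the asymptotics \eqref{eq:asymptCauchy} of $\mathfrak C_{w_i}[P_{\bm n}]$ yields the degree bounds. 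The count for $E_{\bm n}$ splits into the three column-combination contributions --- those governed by $\deg A_i+\sigma_j$ and the one governed by $\deg B_1+\deg B_2$ --- producing the stated maximum, while $F_{\bm n}$ carries one extra derivative, accounting for the $+1$ in its bound.

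Part (c) is then a specialization. When $A_1=A_2=A$, $B_1=B_2=B$, $\sigma=1$ and $n_1=n_2$, Remark~\ref{remark:equal} gives $R_{\bm n}=\const\times A^2$, so $\deg R_{\bm n}=2\deg A$. Dividing \eqref{ode3} by this nonzero constant multiple of $A^2$, the leading coefficient becomes $A_1A_2R_{\bm n}/R_{\bm n}=A^2$, and, using $R_{\bm n}'=\const\cdot 2AA'$, the $y''$-coefficient collapses to $2A(A'+B)$, matching \eqref{ode3bis}. It remains to verify that $A^2$ divides both $E_{\bm n}$ and $F_{\bm n}$, so that $E_{\bm n}^*:=E_{\bm n}/R_{\bm n}$ and $F_{\bm n}^*:=F_{\bm n}/R_{\bm n}$ are genuine polynomials; this is a local computation at the zeros of $A$ using the behaviour of the functions of the second kind there (Proposition~\ref{prop:fromTHM2.1}). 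The bounds $\deg E_{\bm n}^*\le 4$ and $\deg F_{\bm n}^*\le 3$ then follow from part (a) by subtracting $\deg R_{\bm n}=2\deg A$ and inserting $\sigma_1=\sigma_2=1$.

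I expect the main obstacle to be the degree bookkeeping in part (a): pinning down the precise maximum in the bound for $\deg E_{\bm n}$ requires tracking which column-combinations dominate the growth at infinity, where the orders of $q_{\bm n,i}=\mathfrak C_{w_i}[P_{\bm n}]/w_i$ depend on the relative degrees of $A_i$ and $B_i$ through $w_i$; and confirming the divisibility $A^2\mid E_{\bm n},F_{\bm n}$ in part (c) calls for the local analysis at the zeros of $A$ rather than a purely formal manipulation.
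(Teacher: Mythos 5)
Your route inverts the paper's: you start from the $4\times4$ Wronskian identity and expand, whereas the paper first builds the third-order operator $\mathcal M$ by composing the two second-order operators $\mathcal L_1,\mathcal L_2$ as in \eqref{constructionL}, computes every coefficient explicitly in terms of $S_{\bm n,i}$, $C_{\bm n,i}$ and $R_{\bm n}$ (formulas \eqref{def:E}--\eqref{def:F}), and only afterwards identifies $\mathcal M[y]$ with $A_1^2A_2^2v_1v_2\,\mathfrak{Wrons}[P_{\bm n},q_{\bm n,1},q_{\bm n,2},y]$. Your computation for part (b) is correct: the logarithmic-derivative identity $(A_1A_2v_1v_2)'/(A_1A_2v_1v_2)=(2A_1'+B_1)/A_1+(2A_2'+B_2)/A_2$ does convert the coefficient of $y''$ into the stated combination, and this is exactly the identity \eqref{identityL} of the paper, reached from the other end. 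What the paper's order of operations buys is that polynomiality and the degree bounds come for free from the explicit rational expressions: \eqref{def:E}--\eqref{def:F} show the only possible poles of $E_{\bm n},F_{\bm n}$ are at zeros of $S_{\bm n,1}$ (and, by the symmetric construction, of $S_{\bm n,2}$), the Wronskian representation restricts them to $\mathbb A_1\cup\mathbb A_2$, Proposition~\ref{prop:fromTHM2.1}\,c) removes them, and the bound $\deg C_{\bm n,i}-\deg S_{\bm n,i}\le\sigma_i$ read off from \eqref{odegeneral} yields precisely the three-term maximum in part (a).

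The genuine gap is in your part (a). The specific bound $\deg(E_{\bm n})\le\max\{\deg A_1+\sigma_2+\deg R_{\bm n},\,\deg A_2+\sigma_1+\deg R_{\bm n},\,\deg B_1+\deg B_2+\deg R_{\bm n}\}$ is tailored to the individual terms of \eqref{def:E}; your proposed derivation via Liouville plus the asymptotics \eqref{eq:asymptCauchy} of the $3\times3$ minors is only gestured at (``the count splits into the three column-combination contributions''), and carrying it out requires controlling $w_iq_{\bm n,i}^{(k)}$ at infinity for $k\le 3$ --- this is where the actual content of part (a) lives, and it is not done. The same deferral occurs in part (c): you correctly note that $A^2\mid E_{\bm n},F_{\bm n}$ must be verified, but the verification is left as ``a local computation at the zeros of $A$''; in the paper it is immediate because substituting $R_{\bm n}=cA^2$ into \eqref{def:E}--\eqref{def:F} produces \eqref{def:E2}--\eqref{def:F2}, which exhibit the factor $cA^2$ multiplying expressions whose poles can only sit at zeros of $S_{\bm n,i}$. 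To close your argument you would either need to carry out the asymptotic analysis of the minors in detail, or derive the explicit coefficient formulas after all --- at which point you have essentially reconstructed the paper's proof.
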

\begin{proof}
Recall the second order differential operators introduced in \eqref{defDiffOperators},
$$
\mathcal L_i[y]:= 	A_i   V_i  \, 	\mathfrak{Wrons}[y, P_{\bm n}, q_{\bm n,i} ], \quad i=1, 2.
$$
By \eqref{diffOp1} (see Remark~\ref{remark:operator}), 
$$
\mathcal{L}_i[y]= A_i S_{\bm n, i}\, y''+(A'_i S_{\bm n, i}-A_i S_{\bm n, i}'+B_iS_{\bm n, i})\, y'+C_{\bm n,i}y.
$$
Clearly, $\mathcal{L}_i[P_{\bm n}]=\mathcal{L}_i[q_{\bm n,i}]=0$, and by \eqref{defRpolyn},
\begin{equation} \label{expressionEll}
\mathcal{L}_1[q_{\bm n,2}]=\frac{R_{\bm n}}{A_2 v_2}, \quad \mathcal{L}_2[q_{\bm n,1}]=-\frac{R_{\bm n}}{A_1 v_1}.
\end{equation}
Consider the third order linear differential operator
\begin{equation} \label{constructionL}
\mathcal{M}[y]:=\frac{A_2^2v_2}{S_{\bm n,1}}\left( \mathcal{L}_1[q_{\bm n,2}] \left(\mathcal{L}_1[y]\right)'
-\left( \mathcal{L}_1[q_{\bm n,2}]  \right)'\mathcal{L}_1[y]\right).
\end{equation}
By construction, the differential equation $\mathcal{M}[y]=0$ is solved by $P_{\bm n}$, $q_{\bm n,1}$ and $q_{\bm n,2}$. Using \eqref{defRpolyn} and \eqref{expressionEll} we can find the explicit expressions for the coefficients of $\mathcal M$. For instance, the coefficient at $y'''$ is
$$\frac{A_2^2v_2}{S_{\bm n,1}}\frac{R_{\bm n}}{A_2 v_2}A_1S_{\bm n,1}=A_1A_2R_{\bm n}.$$
The one at $y''$ is
\begin{align*}
	&\frac{A_2^2v_2}{S_{\bm n,1}}\left(\frac{R_{\bm n}}{A_2 v_2}\Big((A_1S_{\bm n,1})'+(-A_1S_{\bm n,1}'+A_1'S_{\bm n,1}+B_1S_{\bm n,1})\Big)-\left(\frac{R_{\bm n}}{A_2 v_2}\right)'A_1S_{\bm n,1}\right)\\
	=&\frac{A_2^2v_2}{S_{\bm n,1}}\left(\frac{R_{\bm n}}{A_2 v_2}\left(2A_1'S_{\bm n,1}+B_1S_{\bm n,1}\right)-\frac{R_{\bm n}}{A_2 v_2}\left(\frac{R_{\bm n}'}{R_{\bm n}}-\frac{A_2'}{A_2}-\frac{v_2'}{v_2}\right)A_1S_{\bm n,1}\right)\\
	=&A_2R_{\bm n}\left(\left(2A_1'+B_1\right)-\left(\frac{R_{\bm n}'}{R_{\bm n}}-\frac{A_2'}{A_2}-\frac{v_2'}{v_2}\right)A_1\right)\\
	=&A_2R_{\bm n}\left(\left(2A_1'+B_1\right)-\left(\frac{R_{\bm n}'}{R_{\bm n}}-\frac{A_2'}{A_2}-\frac{A_2'+B_2}{A_2}\right)A_1\right)\\
	=&A_1(2A_2'+B_2)R_{\bm n}+A_2(2A_1'+B_1)R_{\bm n}-A_1A_2R_{\bm n}'.
\end{align*}
Similar calculations for the rest of the coefficients show that 
$$
\mathcal{M}[y]=A_1A_2R_{\bm n}y'''+(A_1(2A_2'+B_2)R_{\bm n}+A_2(2A_1'+B_1)R_{\bm n}-A_1A_2R_{\bm n}')y''+E_{\bm n}y'+F_{\bm n}y,
$$
where
\begin{align}
	E_{\bm n}&=\frac{-A_1A_2R_{\bm n}S_{\bm n,1}''+((-A_1(2A_2'+B_2)+A_2B_1)R_{\bm n}+A_1A_2R_{\bm n}')S_{\bm n,1}'+A_2R_{\bm n}C_{\bm n,1}}{S_{\bm n,1}}\nonumber\\
	&+(A_1''A_2+A_1'(2A_2'+B_2)+2A_2'B_1+A_2B_1'+B_1B_2)R_{\bm n}-A_2(A_1'+B_1)R_{\bm n}'\,,\label{def:E}
\end{align}
and
\begin{equation}\label{def:F}
	F_{\bm n}=\frac{(A_2C_{\bm n,1}'+(B_2+2A_2')C_{\bm n,1})R_{\bm n}-A_2C_{\bm n,1}R_{\bm n}'}{S_{\bm n,1}}.
\end{equation}
Construction \eqref{constructionL} can be carried out exchanging the role of the indices $i=1$ and $i=2$. It yields a third order linear differential equation with the same coefficient at $y'''$. Since $P_{\bm n}$, $q_{\bm n,1}$ and $q_{\bm n,2}$ are linearly independent (assumption \eqref{mainconditionwronks}),  we conclude that this is the same ODE. In other words,  \eqref{def:E} and \eqref{def:F} are invariant by exchange of the indices $i=1$ and $i=2$. 

Moreover, a third order linear differential equation with the same set of solutions is
$$
A_1^2A_2^2v_1v_2\, \mathfrak{Wrons}[P_{\bm n},q_{\bm n,1},q_{\bm n ,2},y]=0.
$$
Again, by \eqref{defRpolyn}, the coefficient at $y'''$ is $A_1A_2R_{\bm{n}}$, which shows that
\begin{equation} \label{identityL}
\mathcal M[y]=A_1^2A_2^2v_1v_2\, \mathfrak{Wrons}[P_{\bm n},q_{\bm n,1},q_{\bm n ,2},y].
\end{equation}
In particular, functions $E_{\bm n}$ and $F_{\bm n}$  in \eqref{def:E} and \eqref{def:F} coincide with those defined in \eqref{defEFalt2}. 

From  \eqref{def:E} and \eqref{def:F} it follows that  $E_{\bm n}$ and $F_{\bm n}$ are rational functions.
By \eqref{identityL} and the definition of $q_{\bm n,i}$, $i=1, 2$, their  poles can be located at the zeros of $A_1$ and $A_2$ only. However, expressions \eqref{defEFalt2}  and assertion c) of Proposition~\ref{prop:fromTHM2.1} imply that all their singularities are all removable, so that $E_{\bm n}$ and $F_{\bm n}$  are polynomials. Since by \eqref{odegeneral}, $\deg(C_{\bm n,1})-\deg(S_{\bm n,1})\leq \sigma_1$, identities \eqref{def:E} and \eqref{def:F} imply the claimed upper bounds for the degrees of $E_{\bm n}$ and $F_{\bm n}$ . This proves a) and b) of the statement of the theorem.

Finally, let  $A_1=A_2=A$, $B_1=B_2=B$ (so that $w_1 =w_2 $), and $\sigma=1$, $n_1=n_2$.

In this situation, $R_{\bm n}= cA^2$, $c\in \R\setminus \{0\}$ (see Remark~\ref{remark:equal}), so that by  \eqref{def:E},  
\begin{equation}\label{def:E2}
E_{\bm n}= c	 A^2 \left( A\frac{-AS_{\bm n,i}''+C_{\bm n,i}}{S_{\bm n,i}}
	+ AA''+A'B+AB'+B^2\right), \quad i=1, 2, 
\end{equation}
while  by \eqref{def:F}, 
\begin{equation}\label{def:F2}
  F_{\bm n}=c  A^2\frac{BC_{\bm n,i}+AC_{\bm n,i}'}{S_{\bm n,i}}, \quad i=1, 2.
\end{equation}
Dividing \eqref{ode3} through by $cA^2$ we get \eqref{ode3bis}.
\end{proof}

 \begin{remark} \label{remark:higherorder}
The construction given in this proof can be, apparently, extended to MOP of type II with respect to more than two weights. For instance, for a multiple orthogonal polynomial $P_{\mathbf{n}}$ with respect to semiclassical weightss $w_i$, $i=1,2,\dots, m\geq 3$,  we get the corresponding ODEs
 $$\mathcal{L}_i[y]:=A_i S_{\mathbf{n},i}y''+(A_i'S_{\mathbf{n},i}-A_iS_{\mathbf{n},i}'+B_iS_{\mathbf{n},i})y'+C_{\mathbf{n},i}y=0, \qquad i=1,2,\dots, m.$$
 Then $P_{\mathbf{n}}$  and $q_{\mathbf{n},i}$, $i=1, 2,\dots, m$, are solutions of the fourth order ODE
 \begin{equation} \label{eqHigherOrder}
 \mathcal{M}[y]:=\frac{1}{S_{\mathbf{n},1}} \left( \prod_{i=2}^m A_i^{m-1} v_i\right)
\, \mathfrak{Wrons}[\mathcal{L}_1[y], \mathcal{L}_1\left[q_{\mathbf{n},2}], \dots, \mathcal{L}_1[q_{\mathbf{n},3}], \dots, \mathcal{L}_1[q_{\mathbf{n},m}]\right]=0.
\end{equation}
As in the proof above, one could expect that the coefficients of this ODE are rational functions with only possible poles  at the roots of $S_{\mathbf{n},1}$. Analogous ODEs can be obtained by replacing in \eqref{eqHigherOrder} $S_{\mathbf{n},1}$ by $S_{\mathbf{n},i}$ and $\mathcal{L}_1$  by $\mathcal{L}_i$, with $i=2, \dots, m$. This will imply again that the poles of the coefficients of the ODE could be only at common roots of the electrostatic partners. On the other hand, these ODEs are equivalent to
 $$
\left( \prod_{i=1}^m A_i^{m} v_i\right)
\, \mathfrak{Wrons}\left[y, P_{\mathbf{n}}, q_{\mathbf{n},1}, q_{\mathbf{n},2}, \dots , q_{\mathbf{n},m}\right]=0.
 $$
Assertion c) of Proposition~\ref{prop:fromTHM2.1} yields again that all the possible poles of the coefficients should be removable. This construction leads to a linear ODE of order $m+1$ with polynomials coefficients whose degrees depend  only on the classes of the weights $w_i$,   $i=1, 2, \dots, m$.
 \end{remark}

\section{Further examples}\label{sec:examples}

In this section, we discuss   several examples of multiple orthogonal polynomials. 

\subsection{Jacobi polynomials with non-standard parameters}\label{sec:complex}
\

	Les us return to Jacobi polynomials $P_N=P_N^{(\alpha, \beta)}$ in the non-standard situation considered already in Example \ref{ex:Jacobi1}, namely, when neither $\alpha, \beta$, or  $\alpha + \beta $ are integers,  $\beta> -1$, and $-N<\alpha < -1$. As it was shown in \cite[Theorem 6.1]{ETNA05},  in this case $P_N$ is a type II multiple orthogonal polynomial. Indeed, on one hand it satisfies 
	$$
	\int_{-1}^1 x^j P_N(x) w_1(x)dx=0\,,\quad j=0,1,\dots, n_1-1,
	$$
	where $\Delta_1= [-1,1]$,  $n_1=N-[-\alpha]$,  and  $w_1(x)=(x-1)^{\alpha+[-\alpha]}(x+1)^\beta	$ (see Example \ref{ex:Jacobi1}). On the other, 
	$$
	\int_{\Delta_2} z^j P_N(z) w_2(z)dz=0\,,\quad j=0,1,\dots, n_2-1,
	$$
	where $\Delta_2$ is an arbitrary curve 	oriented clockwise, connecting $-1 - i 0$ with $-1 + i 0$ and lying
	entirely in $\C \setminus (-\infty, 1]$, except for its endpoints, $n_2=[-\alpha]$,  and $w_2(z)=(z-1)^{\alpha }(z+1)^\beta	$. 

We have established in Example \ref{ex:Jacobi1} that 
	$$
	S_{\bm n, 1}(x)=(x-1)^{[-\alpha]},
	$$
	as well as that $C_{\textbf{n},1}(x) = -\lambda_{\textbf{n},1} S_{\textbf{n},1}(x)$. As for $S_{\bm{n},2}$, we lack in this case  the second condition in  \eqref{defHP}, that is, we cannot guarantee that	
	\begin{equation}\label{nonzero}
	\int_{\Delta_2} z^{n_2} P_N(z) w_2(z)dz\neq 0,
	\end{equation} 
	(and in general, this is false), which makes the formula \eqref{leadingC} of no value. 
	
Reasoning as in Example \ref{ex:Jacobi1} and combining \eqref{ode1} and the standard differential equation for the Jacobi polynomials we arrive at the identity 
	\begin{equation}\label{id1S2}
		(x^2-1) S'_{\textbf{n},2}(x) P'_{N}(x) = (\lambda_N S_{\bm {n},2}(x) + C_{\bm {n},2}(x))\,P_{N}(x)\,.
	\end{equation} 
	Again, we have two options: either $S'_{\bm {n},2} \equiv 0$ and, thus, 
	\begin{equation}\label{S2}
		S_{\bm {n},2}(x) \equiv \;\text{const}\,,\quad C_{\bm {n},2} = - \lambda_N S_{\bm {n},2} \equiv \;\text{const}\,,
	\end{equation}
	or, otherwise, the identity
	\begin{equation}\label{id2S2}
		\frac{P'_{N}(x)}{P_{N}(x)}\,=\,\frac{\lambda_N S_{\bm {n},2}(x) + C_{\bm {n},2}(x)}{(x+1) \left((x-1) S'_{\bm {n},2}(x) - [-\alpha] S_{\bm {n},2}(x)\right) }
	\end{equation}
	holds. In this case, the facts that $P_{N}$ and $P'_{N}$ are relatively prime ($P_N$ has no multiple roots) and that the degree of $S_{\bm {n},2} \leq N-[-\alpha]\,,$ with $-N <\alpha <-1\,,$ imply that this cannot take place, and thus, \eqref{S2} is the unique possible solution.
	
	We already saw in Example  \ref{ex:Jacobi1} that the zeros of $P_{\bm {n}}$ were in equilibrium (that is, their counting measure was critical) in the external field \eqref{extFieldJacobi}.   From \eqref{expressionS_NJac}, \eqref{expreforR} and \eqref{S2}, and taking into account that in this case $A_1 = A_2 = A$, we have   that $R_{\bm n} \equiv 0$. Moreover, the zeros of $S_{\bm {n},1}$ are obviously not simple, and in such a case we cannot say anything about electrostatics for its zeros.
	
Obviously, what fails in this case is \eqref{nonzero}, which implies that the index $\bm n=(N-[-\alpha],  [-\alpha])$ is not normal.

\subsection{Multiple Hermite polynomials} \label{sec:multipleHermite}

Multiple Hermite polynomials $\frak H_{\bm n}$, $\bm n=(n_1, n_2)$,  are type II MOP of degree $\leq N=n_1+n_2$, defined by
$$
\begin{aligned}
	&\int_{-\infty}^{\infty} x^{k} \frak H_{\bm n}(x)  e ^{-x^{2}+c_{1} x} \, dx=0, \quad k=0,1, \ldots, n_1-1 ,\\
	&\int_{-\infty}^{\infty} x^{k} \frak H_{\bm n}(x)  e ^{-x^{2}+c_{2} x} \, dx=0, \quad k=0,1, \ldots, n_2-1.
\end{aligned} 
$$
If $c_{1} \neq c_{2}$ then   the weights $w_i(x)=e ^{-x^{2}+c_{i} x} $  form an AT-system, see \cite{MR2187942},  \cite[section 23.5]{Ismail05} and  \cite[section 3.4]{MR1808581}.
These MOP can be obtained using the Rodrigues formula
$$
e^{-x^{2}} \frak H_{\bm n}(x)=(-1)^{N} 2^{-N}\left(\prod_{j=1}^{2} e^{-c_{j} x} \frac{d^{n_{j}}}{d x^{n_{j}}} e^{c_{j} x}\right) e^{-x^{2}},
$$
which yields  the explicit expression
$$
\frak H_{\bm n}(x)=(-1)^{|\vec{n}|} 2^{-|\vec{n}|} \sum_{k_{1}=0}^{n_{1}}  \sum_{k_{2}=0}^{n_{2}} \binom{n_1}{ k_1}	 \binom{n_2}{ k_2}	 
c_{1}^{n_{1}-k_{1}}  c_{2}^{n_{2}-k_{2}}(-1)^{k_1+k_2} H_{k_1+k_2}(x),
$$
where $H_n(x)=2^n x^n +\dots$ is the standard Hermite polynomial \eqref{hermiteH}, see \cite[Section 23.5]{Ismail05} or \cite{MR3907776}.

In our notation,  
$$
A_1(x)=A_2(x)\equiv 1, \quad B_i(x)=-2x+c_i , \quad \sigma_i=0,  \quad i=1, 2,
$$
with $\Delta_1=\Delta_2=\mathbb R$.   
The differential equation \eqref{ode3} takes the form
$$
R_{\bm n}y'''+\left[ (B_1 + B_2)R_{\bm n}- R_{\bm n}' \right] y''+E_{\bm n}y'+F_{\bm n}y=0.
$$
Formula \eqref{degRoverAparticularcase} shows that in this case $ R_{\bm n}$ is a constant, so that the equation boils down to
 $$
y'''+ (B_1 + B_2)  y''+E_{\bm n}y'+F_{\bm n}y=0,
 $$
where by Theorem \ref{thmODER}, $E_{\bm n}$ and $F_{\bm n}$ have degree at most $2$ and $1$, respectively. These polynomials can be obtained explicitly by taking into account the behavior of the solutions of the equation at the singular points. Indeed, if we write
$$E_{\bm n}(x)=e_0+e_1x+e_2x^2\,,\qquad F_{\bm n}(x)=f_0+f_1 x\,,$$
and replace the asymptotic behavior of  $q_{\bm n,1}(x)$  as $x\to\infty$ (for instance, along the imaginary axis),  
$$q_{\bm n,1}(x)=\text{const} \times e^{x^2-c_1x}x^{-n_1-1}(1+\mathcal O(1/x)),$$
into the differential equation,  we get consecutively
$$e_2=4,\qquad e_1=-2c_1-2c_2,\qquad e_0=-2+c_1c_2-\frac{f_1}{2},\qquad f_0=2c_2n_1-2c_1n_1-\frac{c_1f_1}{2}.$$
An analogous procedure for  $q_{\bm n,2}$, together with the previous  identities, yields
$$f_1=-4n_1-4n_2\,.$$
As a consequence,  we get explicit expressions for polynomials $E_{\bm n}$ and $F_{\bm n}$, which can be expressed as follows:
$$E_{\bm n}=B_1B_2+2(n_1+n_2-1)\,,\qquad F_{\bm n}=2n_2B_1+2n_1B_2.$$
Thus, $\frak H_{\bm n}$ and the corresponding functions of the second kind  $q_{\bm n,1}$ and $q_{\bm n,2}$ are independent solutions of the equation
$$
y'''+ (B_1 + B_2)  y''+(B_1B_2+2(n_1+n_2-1))y'+(2n_2B_1+2n_1B_2)y=0,
$$
which coincides with the one  obtained previously in \cite[section 5.1]{MR3055365}.

By Theorem \ref{thm:vectorelectrostatics},  
the discrete vector  measure $\vec \nu_1:=(\nu(\frak H_{\bm n}),\nu(S_{\bm n,1}))$ is a  critical vector  measure for the energy functional $\mathcal{E}_{\vec \varphi, a}$, with $a=-1/2$ and
$$
\vec \varphi (z)=\frac{1}{2}  \left( x^2-c_1 x ,    (c_1-c_2) x  \right), \quad z=x+iy. 
$$

Direct computation shows that 
$$
m^{\pm}_k:=\int_{-\infty}^\infty x^k  e ^{-x^{2}\pm x}dx =\sqrt{\pi }   \sqrt[4]{e} \left(\frac{\pm 1}{2i}\right)^k H_k\left(\frac{i}{2}\right), \quad k=0, 1, \dots ,
$$
and that
$$
 \int_{-\infty}^\infty x^k  e ^{-x^{2}+c x}dx =e^{(c^2-1)/4} \sum_{j=0}^k \binom{k}{j} \left( \frac{c-1}{2}\right)^{k-j} m^+_j, \quad c\neq 1, \quad k=0, 1, \dots .
$$
These formulas allow us to obtain (at least, using symbolic computation) the moments of $w_i$, and in consequence,   the asymptotic expansion of  $\mathfrak C_{w_i}[\frak H_{\bm n}]$ at infinity. This yields   $S_{\bm n, i}$ by formula \eqref{defS12}. 

In the following examples we will consider the symmetric case  $n_1=n_2$ and $c_1=-c_2=c$, for which clearly $S_{\bm n, 1}(-x)$ and $S_{\bm n, 2}(x)$ coincide up to a multiplicative constant. In this case,  explicit formulas for $\frak H_{(n,n) }$ and $S_{\bm n, 1}$ are easily obtained with the help of a computer algebra system, at least for low $n$'s. 
For instance, for $\bm n=(5,5)$ and $c=1$ we have that 
$$
\frak H_{(5,5) }(x)=x^{10}-\frac{95 x^8}{4}+\frac{1405 x^6}{8}-\frac{14855 x^4}{32}+\frac{94325 x^2}{256}-\frac{39971}{1024},
$$
and up to normalization,
$$
S_{\bm n, 1}(x)=S_{\bm n, 2}(-x)=32 x^5+560 x^4+4240 x^3+17560 x^2+39970 x+39971 .
$$
All zeros of $\frak H_{(5,5) }$ are real and simple, while $S_{\bm n, 1}$ has one real zero (smaller than the zeros of $\frak H_{(5,5) }$) and two pairs of complex conjugate simple zeros.

 Asymptotics of  sequences of (rescaled) multiple Hermite polynomials  
 $$
 p_{(n,n)}(t) = \frak H_{(n,n) }\left( \sqrt{n }\,  t \right), \quad 
 $$ 
 with $c_1=-c_2=c$ proportional to $\sqrt{n}$, has been studied by Aptekarev, Bleher and Kuijlaars in a series of papers \cite{MR2172687, Bleher/Kuijlaars1, MR2276453} in the context of the random matrix theory. In particular, they found that the support of the limit of the zero-counting measures $\nu(p_{(n,n)})$ is a single interval, roughly speaking, for $0\le c \ll 2\sqrt{n}$, and is comprised of two symmetric intervals for $c \gg 2\sqrt{n}$. It is interesting to compare these conclusions with results of the  numerical experiments presented  in Figure~\ref{FigHermite35}. There, $\bm n=(35,35)$ and $c_1=-c_2=c>0$, with the phase transition happening around $c\approx 12$. Notice that for $c=1$, the zeros of   $S_{\bm n, 1}$ are visibly distributed along a curve on the complex plane. As $c$ increases, more and more zeros of $S_{\bm n, 1}$ migrate to the negative semi-axis, interlacing with the zeros of $\frak H_{(n,n) }$, until we get a two-cut situation. In this case, the configuration resembles the relative position of the zeros of  $\frak H_{(n,n) }$ and $S_{\bm n, 1}$ for the Angelesco system, described in Section \ref{sec:angelesco}, which explains why the description of the asymptotic limit of $\nu(\frak H_{\bm n})$ in this case is given in \cite{Bleher/Kuijlaars1} in terms of the Angelesco vector equilibrium problem, see Section~\ref{sec:AsymptAngelesco}.

\begin{figure}[h]
	\centering 
		\hspace{-3mm} \begin{tabular}{cc}
		\begin{overpic}[scale=0.6]{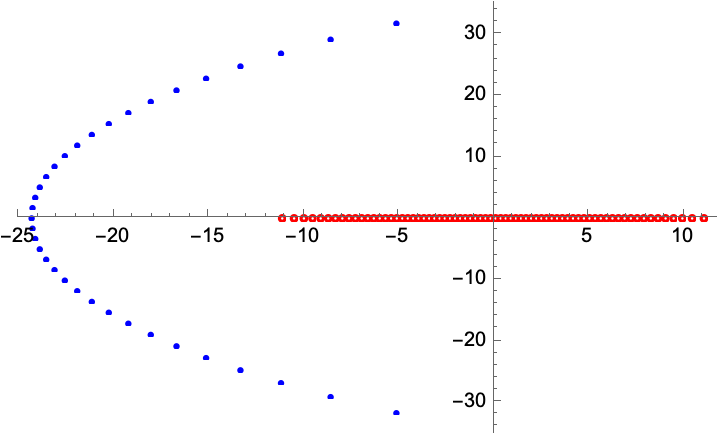}
		\end{overpic} 
			& 
				\begin{overpic}[scale=0.6]{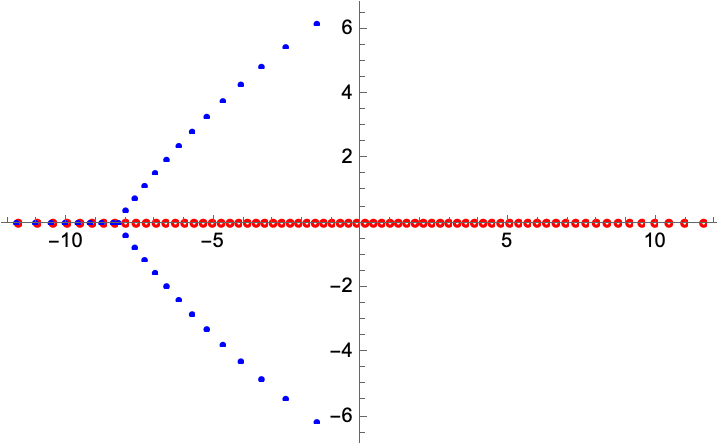}
				\end{overpic} \\
			\begin{overpic}[scale=0.6]{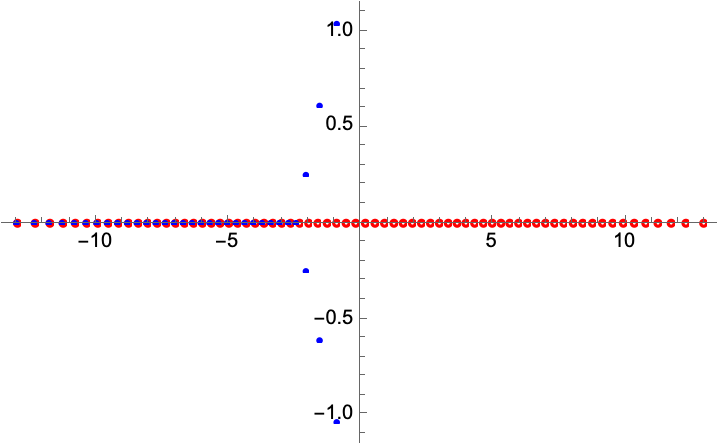}
			\end{overpic} 
			& 
			\begin{overpic}[scale=0.6]{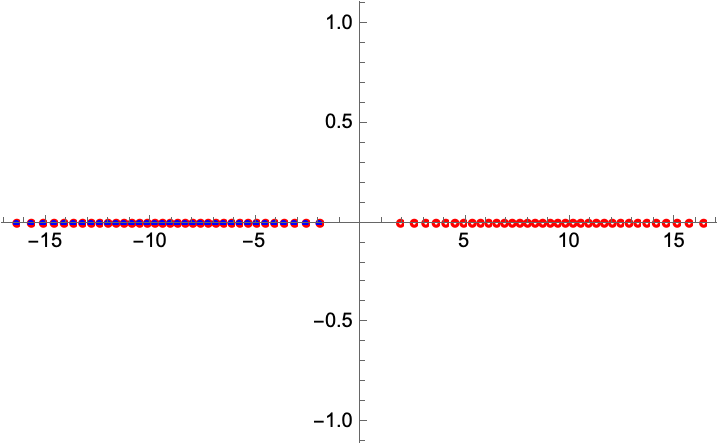}
			\end{overpic}
			\end{tabular}
	\caption{Zeros of  the multiple Hermite polynomial  $\frak H_{\bm n}$ (indicated by empty circles, all  on the real axis) and of $S_{\bm n, 1}$ (filled circles)   for  $\bm n=(35,35)$, $c_1=-c_2$, for different values of $c$: $c=1$ (top left), $c=4$ (top right), $c=8$ (bottom left) and $c=16$ (bottom right). The zeros of $S_{\bm n, 1}$ that are on the left semi-axis apparently interlace with the zeros of  $\frak H_{\bm n}$.} \label{FigHermite35}
\end{figure}
 
 A generalization of multiple Hermite polynomials to the case of polynomials $P_{\bm n}$, $\bm n = (n,n)$, satisfying the varying orthogonality conditions
$$
\int_{-\infty}^{\infty} x^{k} P_{\bm n}(x)  e ^{-n(V(x)\pm c x)} \, dx=0, \quad k=0,1, \dots, n -1,
$$
where $V$ is a polynomial of even degree and positive leading coefficient, has been carried out in \cite{MR2743878}, again associated to random matrix models with external source. The limit zero distribution of $P_{\bm n}$'s was described there in terms of a constrained 2-component vector equilibrium problem, with one of the components on the imaginary axis. The two-cut situation in the asymptotics of multiple Hermite polynomials, and thus the reduction to the Angelesco equilibrium, is in this case  equivalent to the constraint on the imaginary axis to be not achieved (not ``saturated''). The study in \cite{MR2743878} has been extended in \cite{AMFS21} (see also \cite{ALT2011} for the quartic case) to a non-symmetric situation, showing that it can be alternatively characterized in terms of a 3-component critical vector  measure with the interaction matrix from Corollary~\ref{cor:asymptotics3}. The curves outlined by the zeros of $P_{\bm n}$ and $S_{\bm n, 1}$ in Figure~\ref{FigHermite35}  are consistent with the support of the three components described in \cite[Theorem C]{AMFS21}.

\subsection{Multiple Laguerre polynomials of the first kind} \label{sec:MultLaguerreIkind}

These polynomials are defined by the orthogonality conditions  
$$
\begin{aligned}
	&\int_{0}^{\infty} x^{k} \frak L_{\bm n}(x) x^{\alpha_{1}}  e^{-x} \, d x=0, \quad k=0,1, \ldots, n_1-1 \\
	&\int_{0}^{\infty} x^{k} \frak L_{\bm n}(x) x^{\alpha_{2}} e^{-x} \mathrm{~d} x=0, \quad k=0,1, \ldots, n_2-1,
\end{aligned}\quad \deg  \frak L_{\bm n} \leq N=n_1+n_2,
$$
where $\alpha_{1}, \alpha_{2}>0$ and $\alpha_{1}-\alpha_{2} \notin \mathbb{Z}$, under which condition the weights form an AT-system; see \cite{AptBraVA}, \cite[section 23.4.1]{Ismail05},  and \cite[Section 3.2]{MR1808581}.  Not only that, since $x^\beta$, for $\beta<0$ and $x>0$, can be written as the Cauchy integral of a positive weight on $(-\infty, 0)$, coinciding up to a multiplicative constant with $|x|^\beta$, we conclude that this pair of weight forms a Nikishin system with $\Delta_1=\Delta_2=[0,+\infty)$ and $[c,d]=[-\infty, 0]$, see Section \ref{sec:nikishin}. 

Polynomials $\frak L_{\bm n}$ can be obtained by using the Rodrigues formula,
$$
(-1)^{N} e^{-x} \frak L_{\bm n}(x)=\prod_{j=1}^{2}\left(x^{-\alpha_{j}} \frac{d^{n_{j}}}{d x^{n_{j}}} x^{n_{j}+\alpha_{j}}\right) e^{-x}, \quad N=n_1+n_2, 
$$
from which one can find the explicit expression
\begin{align*}
(-1)^{N} e^{-x} \frak L_{\bm n}(x) & = 	 \left(\alpha_{1}+1\right)_{n_{1}}  \left(\alpha_{2}+1\right)_{n_{2}} {}_{2} F_{2}
\left(\begin{array}{c}
		\alpha_{1}+n_{1}+1,   \alpha_{2}+n_{2}+1 \\
		\alpha_{1}+1,  \alpha_{2}+1
	\end{array} \bigg| -x\right).
\end{align*}
 
In our notation,  
$$
A_1(x)=A_2(x)=x, \quad B_i(x)= \alpha_i - x, \quad \sigma_i=0,  \quad i=1, 2,
$$
with $\Delta_1=\Delta_2=[0,+\infty)$.  These polynomials  satisfy the differential equation  \eqref{ode3}, which  takes the form
\begin{equation} \label{laguerremult}
	x^2 R_{\bm n}(x) y'''(x)+\left[ (B_1(x)+B_2(x)+4) xR_{\bm n}(x)- x^2 R_{\bm n}' (x)\right] y''(x)+E_{\bm n}(x)y'(x)+F_{\bm n}(x)y(x)=0.
\end{equation}
It follows from formula  \eqref{degRoverAparticularcase} that  $ R_{\bm n}(x)/x$ is a constant.  Thus, the equation reduces to 
$$
x^3 y'''+ x^2 \left[ B_1(x)+B_2(x)+3 \right] y''+E_{\bm n}y'+F_{\bm n}y=0, 
$$
where $E_{\bm n}$ and $F_{\bm n}$ are of degrees at most $3$ and $2$, respectively. As in Section~\ref{sec:multipleHermite},  the asymptotics of the corresponding functions of second kind $q_{\bm n,1}$ and $q_{\bm n,2}$ at $\infty$ yields some constraints on the coefficients of $E_{\bm n}$ and $F_{\bm n}$, which unfortunately are not sufficient to determine the polynomials in this case. We need to make use also of the predicted   behavior of the solutions at the origin. 

Notice that $0$ is a regular singular point (a Fuchsian singularity) of \eqref{laguerremult}. The fact that the weights constitute an AT-system on $[0,+\infty)$ implies also that $\frak L_{n_1, n_2}(0)\neq 0$. In consequence,   $E_{\bm n}(0)=0$ and  $F_{\bm n}(0)=0$. Expanding the solutions at the origin we conclude that the indicial polynomial  must vanish at $0$, $-\alpha_1$ and $-\alpha_2$. All this additional information allows us to determine $E_{\bm n}$ and $F_{\bm n}$:
\begin{align*}
	E_{\bm n}&=x(x^2+(n_1+n_2-\alpha_1-\alpha_2-3)x+(1+\alpha_1)(1+\alpha_2)\,,\\
	F_{\bm n}&=x(-(n_1+n_2)x+n_1+n_2+n_1n_2+\alpha_1n_2+\alpha_2n_1)\,.
\end{align*}
Canceling the common factor $x$ in the  four coefficients of \eqref{laguerremult} yields  the  equation that appeared already in \cite[Section 4.3]{AptBraVA},
$$
\begin{aligned}
	x^{2} y''' (x) &+\left(-2 x^{2}+\left(\alpha_{1}+\alpha_{2}-3\right) x\right) y''(x)+\left(x^{2}-x\left(\alpha_{1}+\alpha_{2}-n_1-n_2+3\right)\right.\\
	&\left.+\left(\alpha_{1}+1\right)\left(\alpha_{2}+1\right)\right) y'(x)-\left(x(n_1+n_2)-\left(n_1+n_2+n_1 n_2+\alpha_{1} n_2 +\alpha_{2} n_1\right)\right) y(x)=0.
\end{aligned}
$$

From the conclusions of Section~\ref{sec:nikishin} it follows that the zeros of $\frak L_{\bm n}$ are all positive, and those of  $S_{\bm n,1}$ are negative. By Theorem \ref{thm:vectorelectrostatics},  
the  discrete vector measure $\vec \nu_1:=(\nu(\frak L_{\bm n}),\nu(S_{\bm n,1}))$ is a critical vector  measure for the energy functional $\mathcal{E}_{\vec \varphi, a}$, with $a=-1/2$ and
$$
\vec \varphi (x)=\frac{1}{2}  \left( x-(\alpha_1+1) \log(x)  ,    (\alpha_1-\alpha_2-1)\log|x| \right) .   
$$

Direct computations show that 
$$
\int_{0}^\infty x^{k+\alpha_j}  e ^{-  x}dx =\begin{cases}
	 \sqrt{\pi} \,  2^{-k-1} (2k+1)!!  , & j=1, \\
	 (k+1)!, & j=2,
\end{cases}  \quad k=0, 1, \dots ,
$$
which allows to find the moments of $w_i$,  the asymptotic expansion of  $\mathfrak C_{w_i}[\frak L_{\bm n}]$ at infinity, and in consequence, $S_{\bm n, i}$ (using formula \eqref{defS12}). 

Let us consider the particular case of $n_1=n_2=n$, with $\alpha_1=1/2$, $\alpha_2=1$. 
Then, for $\bm n=(5,5)$,
\begin{align*}
\frak L_{(5,5) }(x)  = & x^{10}-\frac{165 x^9}{2}+\frac{5445 x^8}{2}-\frac{186615 x^7}{4} +\frac{7224525 x^6}{16}-\frac{80613225 x^5}{32}+\frac{127182825
	x^4}{16} \\
& -\frac{107120475 x^3}{8}+10758825 x^2-\frac{13253625 x}{4}+\frac{467775}{2} ,
\end{align*}
and up to normalization,
\begin{align*}
	S_{\bm n, 1 }(x) & =   8 x^5+2720 x^4+107500 x^3+1945020 x^2+46682295 x+1425581520, \\
	S_{\bm n, 2}(x) & =  	32 x^5+2960 x^4+67424 x^3+1313480 x^2+37066290 x+1173966885.
\end{align*}
All zeros of $\frak L_{(5,5) }$ are positive and simple, while each $S_{\bm n, j}$ has one  negative  and two pairs of complex conjugate simple zeros, all of them simple. 

Asymptotics of  sequences of (rescaled) multiple Laguerre polynomials  of the first kind 
$$
p_{(n,n)}(t) = \frak L_{(n,n) }\left( 2 n t  \right)
$$ 
was obtained in \cite{zbMATH05356762} and \cite{MR3471160}. It was shown that the support of the weak-* limit of the zero-counting measures $\nu(p_{(n,n)})$ is the interval $\left[ 0, 27/8 \right]$, with the density presenting the usual square root vanishing at the rightmost endpoint of the support. The expression of the density was derived from the recurrence relation satisfied by polynomials $\frak L_{(n_1,n_2) }$ and no equilibrium problem associated to that distribution was given.

\begin{figure}[h]
	\centering 
		\begin{overpic}[scale=0.7]{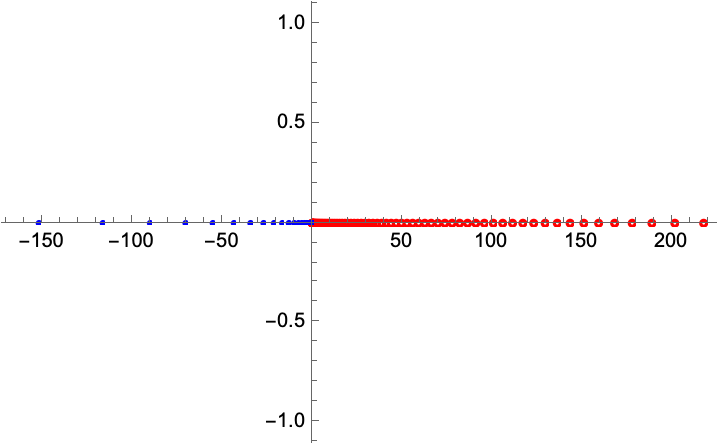}
		\end{overpic} 
	\caption{Zeros of  the multiple Laguerre polynomial of the first kind $\frak L_{\bm n}$ (indicated by empty circles, all  on  the positive semiaxis) and of $S_{\bm n, 1}$ (filled circles, all on the negative semiaxis)   for  $\bm n=(35,35)$ and $\alpha_1=1/2$, $\alpha_2=1$. Nine real zeros of $S_{\bm n, 1}$ (ranging from $-74000$ to   $-201.53$, are not represented. } \label{FigLaguerreI35}
\end{figure}

Again, it is interesting to compare these conclusions with results of the  numerical experiments presented  in Figure~\ref{FigLaguerreI35}, where we take $\bm n=(35,35)$, with $\alpha_1=1/2$, and $\alpha_2=1$. 
The largest zero of  $\frak L_{(35,35) }$ is $217.597$, which is consistent with the expected value of
$$
\frac{27}{8} \times 70 = 236.25.
$$

\subsection{Multiple Laguerre polynomials of the second kind} \label{sec:multipleLag2kind}

These polynomials are defined by the orthogonality conditions
$$
\begin{aligned}
	&\int_{0}^{\infty} x^{k} \mathcal L_{\bm n}(x) x^{\alpha} e^{-c_{1} x} \, dx=0, \quad k=0,1, \ldots, n_1-1,  \\
	&\int_{0}^{\infty} x^{k} \mathcal L_{\bm n}(x) x^{\alpha} e^{-c_{2} x} \, dx=0, \quad k=0,1, \ldots, n_2-1, 
\end{aligned}
$$
where we assume that $\alpha>0$ and $c_{1}, c_{2}>0$ with $c_{1} \neq c_{2}$, under which condition the weights form an AT-system; see e.g.~\cite{MR2187942}, \cite{zbMATH05343896},  or   \cite[section 3.3]{MR1808581}.

An explicit expression can be found in \cite[Section 3]{AptBraVA} or \cite[Section 23.4]{Ismail05}:
$$
	\mathcal L_{\bm n}(x)=  \sum_{k_{1}=0}^{n_{1}}   \sum_{k_{2}=0}^{n_{2}} (-1)^{k_1+k_2} \frac{(k_1+k_2) !}{c_{1}^{k_{1}}  c_{2}^{k_{2}}} \binom{n_{1} }{k_{1} } \binom{n_{2} }{k_{2} } \binom{N+\alpha}{	k_1+k_2}   x^{N-k_1-k_2}.
$$ 
In our notation,  
$$
A_1(x)=A_2(x)=x, \quad B_i(x)= \alpha -c_i x, \quad \sigma_i=0,  \quad i=1, 2,
$$
with $\Delta_1=\Delta_2=[0,+\infty)$.  These polynomials also satisfy the differential equation \eqref{laguerremult}, 
that is,
$$
x^2 R_{\bm n}(x) y'''(x)+\left[ (-(c_1+c_2)x^2 + 2 (\alpha +2)x)R_{\bm n}(x)- x^2 R_{\bm n}' (x)\right] y''+E_{\bm n}(x)y'(x)+F_{\bm n}(x)y(x)=0.
$$
Formula \eqref{degRoverAparticularcase} shows that now $ R_{\bm n}(x)/x$ is a polynomial of degree $1$. 
Furthermore, by \eqref{IdR1} and since $B_1-B_2$ vanishes at $0$, $R_{\bm n}\mathfrak{L}_{\bm n}$ has a double root at $0$. Again, the fact that the weights constitute an AT-system on $[0,+\infty)$ implies also that  $\mathfrak{L}_{\bm n}(0)\neq 0$. Hence, $R_{\bm n}(x)/x^2$ is a constant, and the third order differential equation is
$$x^4y'''-x^3((c_1+c_2)x-\alpha-2)y''+E_{\bm n}y'+F_{\bm n}y=0.
$$
Arguments as described in Sections~\ref{sec:multipleHermite} and \ref{sec:MultLaguerreIkind}, using the asymptotics of the functions of second kind both at $0$ and at $\infty$ and the fact that the indicial polynomial vanishes at $0$ and $-\alpha$, yield the expressions for polynomials $E_{\bm n}$ and $F_{\bm n}$:
\begin{align*}
	E_{\bm n}&=c_1c_2x^4-[(c_1+c_2)(\alpha+1)-n_1c_1-n_2c_2]x^3+\alpha(\alpha+1)x^2,\\
	F_{\bm n}&=-c_1c_2(n_1+n_2)x^3+\alpha(c_1n_1+c_2n_2) x^2.
\end{align*}
Canceling the common factor $x^2$ in the differential equation yields 
$$
\begin{gathered}
	x^{2} y'''(x)-\left(x^{2}\left(c_{1}+c_{2}\right)-2 x(\alpha+1)\right) y''(x)+\left(x^{2} c_{1} c_{2}-x\left[\left(c_{1}+c_{2}\right)(\alpha+1)-n_1 c_{1}-n_2 c_{2}\right]\right. \\
	+\alpha(\alpha+1)) y'(x)-\left(x c_{1} c_{2}(n_1+n_2)-\alpha\left(n_1 c_{1}+n_2 c_{2}\right)\right) y(x)=0,
\end{gathered}
$$
which matches the equation  found in \cite[section 4.3]{AptBraVA} and \cite[section 5.2]{MR3055365}.

By Theorem \ref{thm:vectorelectrostatics},  
the  discrete vector measure $\vec \nu_1:=\left(\nu(\mathcal L_{\bm n}),\nu(S_{\bm n,1})\right)$ is a critical  vector  measure for the energy functional $\mathcal{E}_{\vec \varphi, a}$, with $a=-1/2$ and
$$
\vec \varphi (x)=\frac{1}{2}  \left( c_1 x - (\alpha +1) \log x ,    (c_2-c_1) x  \right), \quad z=x+iy.
$$

Since for $c>0$ and $\alpha>-1$,
$$
\int_{0}^\infty x^{\alpha}  e ^{- c x}dx = c^{-\alpha-1} \Gamma(\alpha+1),  
$$
we can easily calculate  the moments of $w_i$,  the asymptotic expansion of  $\mathfrak C_{w_i}[\mathcal L_{\bm n}]$ at infinity, and in consequence, $S_{\bm n, i}$ (using formula \eqref{defS12}).

Let us consider the particular case of $n_1=n_2=n$, with $\alpha=1$, $c_1=1$, and $c_2=2$.  
Then, for $\bm n=(5,5)$,
\begin{align*}
	\mathcal L_{(5,5) }(x)  = & x^{10}-\frac{165 x^9}{2}+2750 x^8-\frac{96525 x^7}{2}+487575 x^6-\frac{5831595 x^5}{2}+10239075 x^4\\ & -20270250 x^3+20790000 x^2-9355500 x+1247400 ,
\end{align*}
and up to normalization,
\begin{align*}
	S_{\bm n, 1 }(x) & =   2 x^5+25 x^4+605 x^3+16580 x^2+506065 x+16197810, \\
	S_{\bm n, 2}(x) & =  4 x^5-320 x^4+9975 x^3-151645 x^2+1115560 x-2967600.
\end{align*}
All zeros of $\mathcal L_{(5,5) }$ are positive and simple,  $S_{\bm n, 1}$ has one  negative  and two pairs of complex conjugate simple zeros, while $S_{\bm n, 2}$, has one positive and two pairs of complex conjugate roots, all of them simple.

Asymptotics of  sequences of (rescaled) multiple Laguerre polynomials  of the second kind,
$$
p_{(n,n)}(t) = \mathcal L_{(n,n) }\left(  n t \right),  
$$ 
with varying $0<c_1<c_2$ proportional to $n$, has been studied by Lysov and Wielonsky in \cite{zbMATH05343896} using the Riemann-Hilbert technique and the analysis of the Riemann surface derived from the differential equation. 
In particular, they found that there is a critical value $\kappa \approx 12.11\dots$ such that for $0<c_2/c_1<\kappa$, the support of the limit of the zero-counting measures $\nu(p_{(n,n)})$ is a single interval of the form $[0,d]$, $d>0$, while for $c_2/c_1>\kappa$, it is comprised of two real intervals $[0,a]\cup [b,d]$, with $0<a<b<d$.  The expression of the density was also derived, but  no equilibrium problem associated to that distribution was given. 
It is interesting to compare these conclusions with results of the  numerical experiments presented  in Figure~\ref{FigLaguerreII25}, where we take  $\bm n=(35,35)$. We observe that for small values of  $c_2/c_1$ the zeros of $S_{\bm n, 1 }$ sit on a curve on the complex plane. However,  for large ratios $c_2/c_1$, zeros of  $\mathcal L_{(n,n) }$ split into two groups, and the zeros of $S_{\bm n, 1 }$, all real, approximately interlace with the zeros of $\mathcal L_{(n,n) }$ on the leftmost subinterval. This allows us to conjecture that in this case,  the asymptotic zero distribution can be described again in terms of the Angelesco-type vector equilibrium, see Section~\ref{sec:AsymptAngelesco}.

 \begin{figure}[h]
 	\centering 
 		\hspace{-3mm} \begin{tabular}{cc}
 		\begin{overpic}[scale=0.55]{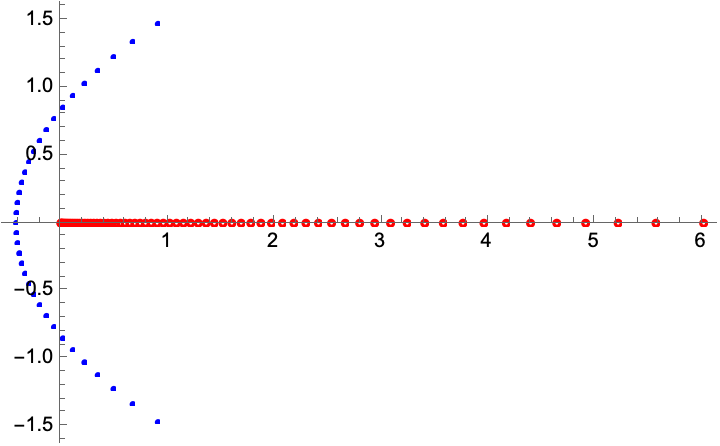}
 		\end{overpic} 
 		& 
 		\begin{overpic}[scale=0.55]{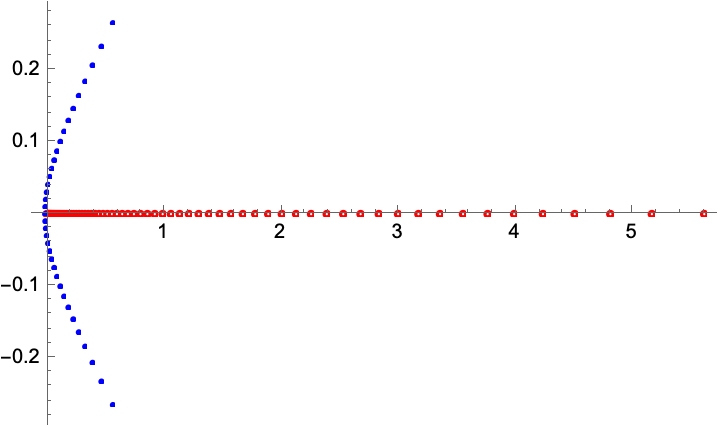}
 		\end{overpic} 
 	\\
 		\begin{overpic}[scale=0.55]{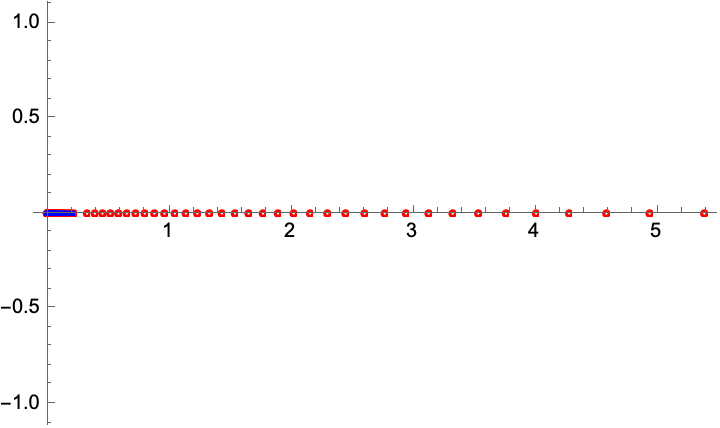}
 		\end{overpic} 
 		& 
 		\begin{overpic}[scale=0.55]{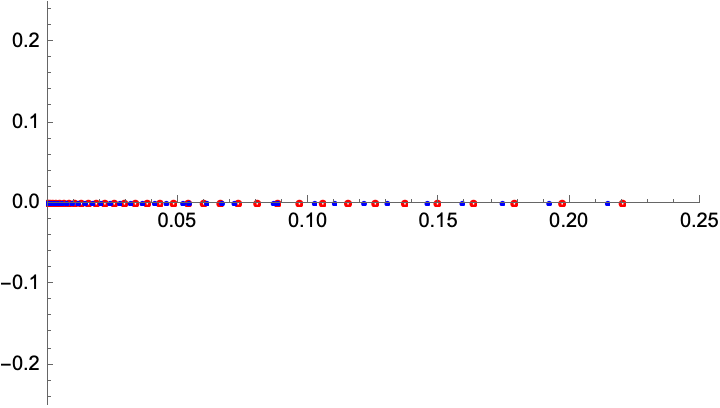}
 		\end{overpic}
 	\end{tabular}
 	\caption{Zeros of  the multiple Laguerre polynomial of the second kind $\mathcal L_{\bm n}$ (indicated by empty circles, all  on  the positive semiaxis) and of $S_{\bm n, 1}$ (filled circles)   for  $\bm n=(35,35)$, and $(c_1,c_2)=(35,70)$ (top left),  $(c_1,c_2)=(35,140)$ (top right),  and $(c_1,c_2)=(35,525)$ (bottom left).  Bottom right: zoom of the interval $(0,0.25)$ for $(c_1,c_2)=(35,525)$.} \label{FigLaguerreII25}
 \end{figure}

\subsection{Jacobi-Pi\~neiro polynomials} \label{sec:JacobiPinero}

The Jacobi-Piñeiro polynomials are multiple orthogonal polynomials associated with an AT system consisting of Jacobi weights on $[0,1]$ with different powers at 0 and the same behavior at $1 $. They are defined by the orthogonality conditions
$$
\begin{aligned}
	&\int_{0}^{1} x^{k} P_{\bm n}(x) x^{\beta_1} (1-x)^\alpha\, dx=0, \quad k=0,1, \ldots, n_1-1,  \\
	&\int_{0}^{1} x^{k} P_{\bm n}(x) x^{\beta_2} (1-x)^\alpha\, dx=0, \quad k=0,1, \ldots, n_2-1.
\end{aligned}
$$
In our notation, $\Delta_1=\Delta_2=[0,1]$ and $w_j(x)= x^{\beta_j} (1-x)^\alpha$,  $j=1, 2$, with  $\alpha, \beta_1, \beta_2>-1$ and $\beta_1-\beta_2\notin \mathbb Z$. We have 
$$
A_1(x)=A_2(x)=x(x-1), \quad B_i(x)= (\beta_i +\alpha) x-\beta_i, \quad \sigma_i=0,  \quad i=1, 2.
$$
For the same reason mentioned at the beginning of Section~\ref{sec:MultLaguerreIkind},   this pair of weight forms a Nikishin system with $\Delta_1=\Delta_2=[0,1]$ and $[c,d]=[-\infty, 0]$, see Section \ref{sec:nikishin}. 

These polynomials were first studied by Piñeiro \cite{MR884516} when $\alpha=0$. The general case appears in \cite[p. 162]{niksor}. 
There is a Rodrigues formula for Jacobi-Pi\~neiro polynomials  $P_{\bm n}$, $\bm n=(n_1, n_2)$,  see \cite[Section 23.3.2]{Ismail05}: with $N=n_1+n_2$,  and up to a constant factor,
\begin{align*}
	P_{\bm n}(x)  &=(1-x)^{-\alpha}  \prod_{j=1}^{2}\left(x^{-\beta_{j}} \frac{d^{n_{j}}}{d x^{n_{j}}} x^{n_{j}+\beta_{j}}\right)(1-x)^{\alpha+N} .
\end{align*}
There is even an explicit expression \cite[Section 3.1]{MR1808581}: again, up to a multiplicative constant,
$$
   P_{\bm n}(x)  =  \sum_{k=0}^{n_1}\binom{	\beta_{1}+n_1 }{
   	k} \binom{	
   	\alpha +N }{
   	n_1-k} \sum_{j=0}^{n_2} \binom{	\beta_{2}+N-k }{		j} \binom{	
		\alpha +k+n_2 }{
		n_2-j}   x^{N-k-j}(x-1)^{k+j}.
$$

Proposition \ref{prop:WronskianS/w} assures that polynomial $R_{\bm n}$ has degree at most $3$. By \eqref{IdR1}, $R_{\bm n}P_{\bm n}$ has a double root at $1$ (observe that $B_1-B_2$ also vanish at $1$) and a simple one at $0$. Since the weights form an AT-system, $P_{\bm n}(0)\neq 0$ and $P_{\bm n}(1)\neq 0$, so that,  up to a multiplicative constant,  $R_{\bm n}(z) =z(z-1)^2$ .

By Theorem \ref{thm:vectorelectrostatics},  and taking into account the expression for $R_{\bm n}$, we conclude that 
the  discrete vector measure $\vec \nu_1:=(\nu(\frak L_{\bm n}),\nu(S_{\bm n,1}))$ is a critical vector  measure for the energy functional $\mathcal{E}_{\vec \varphi, a}$, with $a=-1/2$ and
$$
\vec \varphi = \left(  \frac{\beta_1+1}{2}\log \frac{1}{\left| x\right|  } +  \frac{\alpha+1}{2}\log \frac{1}{\left| x-1\right|  } ,    \left(\beta_1-\beta_2+\frac{1}{2} \right) \log\frac{ 1}{  \left|z \right|}   \right), \quad z=x+iy.   
$$

The third order differential equation can be also obtained following the arguments used in Sections \ref{sec:multipleHermite}--\ref{sec:multipleLag2kind}, and making use of the asymptotics at $\infty$, $0$, and $1$, and of the known roots of the indicial polynomials at both finite points. For instance, in the case $\alpha=0$ (studied by Piñeiro in \cite{MR884516}), polynomials $E_{\bm n}$ and $F_{\bm n}$ (coefficients of $y'$ and $y$, respectively) are
\begin{align*}
	E_{\bm n}&=-x(x-1)^3 ((1 + \beta_1) (1 + \beta_2) +
	x (n_1^2 + n_2^2 + n_1n_2+ n_1(1  + \beta_1) +
	n_2 (1 + \beta_2) - (2 + \beta_1) (2 + \beta_2))),\\
	F_{\bm n}&=-x(x-1)^3 (n_1 + n_2) (1 + n_1 + \beta_1) (1 + n_2 + \beta_2).
\end{align*}
Canceling the common factor $x(x-1)^3$ we obtain  the differential equation 
\begin{align*}
	&x^2(x-1)y'''+x ( x (5 + \beta_1 + \beta_2)-3 - \beta_1 - \beta_2 )y''\\
	&-(x (n_1^2 + n_2^2 + n_1n_2+ n_1(1 + \beta_1) +
	n_2 (1 + \beta_2) - (2 + \beta_1) (2 + \beta_2))-(1 + \beta_1) (1 + \beta_2) 
	)y'\\
	&-(n_1 + n_2) (1 + n_1 + \beta_1) (1 + n_2 + \beta_2)y=0.
\end{align*}
This is the particular case (after canceling the common factor $x-1$)  of  the equation derived in \cite[Section 4.3]{AptBraVA}.

As for the electrostatic partners, let us consider the example when $\alpha=\beta_1=0$ and $\beta_2=-1/2$. Direct computation shows that the moments of $w_i$ are
$$
\int_{0}^1 x^{k }  w_j(x) dx = \begin{cases}
(k+1)^{-1}, & j=1,  \\
2/(2j+1), & j=2,
\end{cases} \quad k=0, 1, \dots ,
$$
which allows to find the asymptotic expansion of  $\mathfrak C_{w_i}[P_{\bm n}]$ at infinity, and in consequence, $S_{\bm n, i}$ (using formula \eqref{defS12}) by means of symbolic computation. For instance, in the case $\bm n=(5,5)$ we obtain that 
\begin{align*}
	\mathcal L_{(5,5) }(x)    = & x^{10}-\frac{380 x^9}{87}+\frac{1615 x^8}{203}-\frac{20672 x^7}{2639}+\frac{9044 x^6}{2001}-\frac{5168 x^5}{3335}+\frac{204
		x^4}{667} \\ & -\frac{64 x^3}{2001}+\frac{x^2}{667}-\frac{4 x}{182091}+\frac{1}{30045015},
\end{align*}
and up to normalization,
\begin{align*}
	S_{\bm n, 1 }(x)   = &\  882230895 x^5+4709406975 x^4+5720142090 x^3+8795888965 x^2 \\
	& +11696347475 x+11645469674, \\
	S_{\bm n, 2}(x)   = & \ 5192762585 x^5+313459871725 x^4+662076961780 x^3+782465377400 x^2 \\
	& +1267133219685 x+1386883054197.
\end{align*}
All zeros of $P_{(5,5) }$ are positive and simple,  both $S_{\bm n, j}$ have one  negative  and two pairs of complex conjugate simple zeros,  all of them simple. 

Zero asymptotics for  sequences of Jacobi-Pi\~neiro polynomials  $P_{(n,n)}$ as $n\to \infty$ and $\alpha$, $\beta_j$'s fixed, was obtained in \cite{zbMATH05356762} and \cite{MR3471160}. Again, the expression of the density, this time on $[0,1]$, was derived from the recurrence relation satisfied by polynomials $\frak L_{(n_1,n_2) }$ and no equilibrium problem associated to that distribution was given. For comparison, results of the  numerical experiments are presented  in Figure~\ref{FigJP25}, where we take $\bm n=(75,75)$, with $\beta_1=0$, $\beta_2=-1/2$, and $\alpha=0$. According to our discussion in Section~\ref{sec:nikishin},  the zeros of  $S_{\bm n, 1}$ are real and negative, and the asymptotic zero distribution can be described  in terms of the Nikishin-type vector equilibrium, see Section~\ref{sec:AsymptNikishin}.

 \begin{figure}[h]
	\centering 
		\hspace{-3mm} \begin{tabular}{cc}
		\begin{overpic}[scale=0.6]{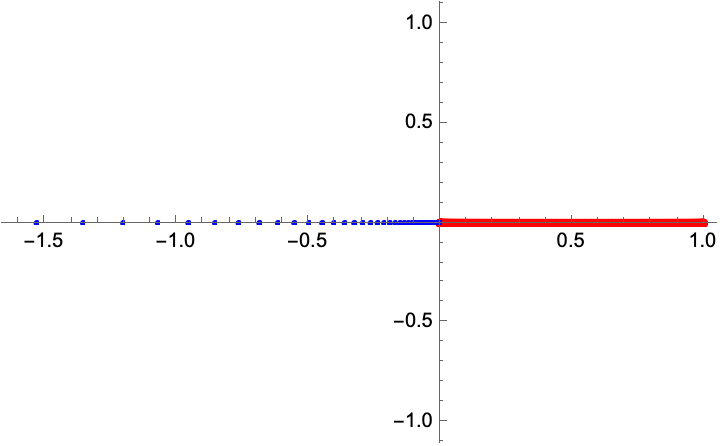}
		\end{overpic} 
			& 
				\begin{overpic}[scale=0.6]{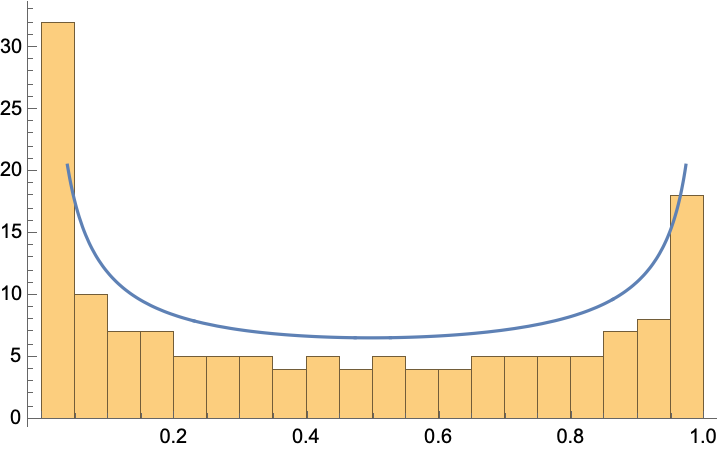}
					\end{overpic}
			\end{tabular}
	\caption{Left: zeros of  the Jacobi-Pi\~neiro polynomial   $P_{\bm n}$  (all  on  $[0,1]$) and of $S_{\bm n, 1}$ (filled circles, all negative)   for  $\bm n=(75,75)$, with $\beta_1=0$, $\beta_2=-1/2$, and $\alpha=0$;   approximately 19 real zeros of $S_{\bm n, 1}$ (ranging from  $-771$ to $-1.74$), are not represented. Right: the histogram of the zeros of $P_{\bm n}$ and the plot of the asymptotic density, predicted in \cite{zbMATH05356762}.} \label{FigJP25}
\end{figure}

\subsection{Angelesco-Jacobi polynomials} \label{sec:AngelescoJacobi}

These polynomials, known also as Jacobi-Jacobi  polynomials (see \cite{Aptekarev:97}) are Hermite--Pad\'e polynomials $P_{\bm n}$, $\bm n=(n_1, n_2)$, satisfying orthogonality relations
\begin{equation*}
\begin{split}
	&\int_{a}^{0} x^{k} P_{\bm n}(x)(x-a)^{\alpha}|x|^{\beta}(1-x)^{\gamma}  \, d x=0, \quad k=0,1,2, \dots, n_1-1, \\
&\int_{0}^{1} x^{k} P_{\bm n}(x)(x-a)^{\alpha} x^{\beta}(1-x)^{\gamma} x^{k} \, d x=0, \quad k=0,1,2, \dots, n_2-1,
\end{split}
\end{equation*}
with $a<0$. In our notation,
	$$
	A(x)=x(x-a)(x-1),  \quad B(x)= \alpha x (x-1)+ \beta (x-a)(x-1)+ \gamma x(x-a),
	$$
	and
	\begin{align*}
		w_1(x) & =w_2(x)=w(x)=(x-a)^{\alpha}|x|^{\beta}(1-x)^{\gamma}, \\ 
		v_1(x) & =v_2(x)=v(x)=(x-a)^{\alpha+1}|x|^{\beta+1}(1-x)^{\gamma+1},
	\end{align*}
	with $\alpha, \beta, \gamma>-1$ and 
	$$
	\Delta_1=[a,0], \quad \Delta_2=[0,1]
	$$
	(cf.~Example~\ref{exampleAngelescoJJ}). 
	
	This is an Angelesco system of semiclassical weights of class $\sigma=1$, see \eqref{def:class}. As it follows from  \cite[Theorem 2.1]{Aptekarev:97}, for $n_1=n_2=n$, $\bm n =(n,n)$, $n\in \N$, $P_{\bm n}$ can be expressed using a Rodrigues formula,
	$$
	P_{\bm n}(x)=\frac{1}{w(x)} \left( \frac{d}{dx}\right)^n \left[ A^n(x)w (x) \right],
	$$
	or  explicitly, see   \cite[Section 3.5]{MR1808581}: up to a constant factor, 
	$$
P_{\bm n}(x)=  \sum_{k=0}^{n} \sum_{j=0}^{n-k} \frac{(-n)_{k+j} (-\alpha-n)_{j} (-\gamma-n)_{k} }{(\beta+1)_{k+j} k ! j !}(x-a)^{n-k}(x-1)^{k+j} x^{n-j}.
$$
	
	Kaliaguin \cite{kaliaguine:1981} studied the case of $a=-1$, with the particular sub-case of   $B\equiv 0$ or $w(x)\equiv 1$ going back   to the work of Appell \cite{Appell1901}. The case of   $-1<a<0$ was addressed in \cite{kaliaguine:1996}, but see \cite{Aptekarev:97} for further historical details. 
	
	If $B\equiv 0$ (that is, $\alpha=\beta=\gamma=0$), definition \eqref{defTransform2} reduces to 
	$$
	S_{\bm n, 1} =  A \times \mathfrak{Wrons}\left[P_{\bm n} ,\mathfrak C_{w_1}[P_{\bm n}] \right] , 
	 \quad \mathfrak C_{w_1}[P_{\bm n}] (x)=\int_{a}^0 \frac{P_{\bm n} (t) }{t-x}dt.
	$$
	Notice that in particular, $x=1$ is one of the $n+1$ zeros of $S_{\bm n, 1} $, $n-1$ of which interlace with the zeros of $P_{\bm n}$ on $[0, 1)$.
	
	\begin{figure}[h]
		\centering 
		\hspace{-3mm} \begin{tabular}{cc}
			\begin{overpic}[scale=0.55]{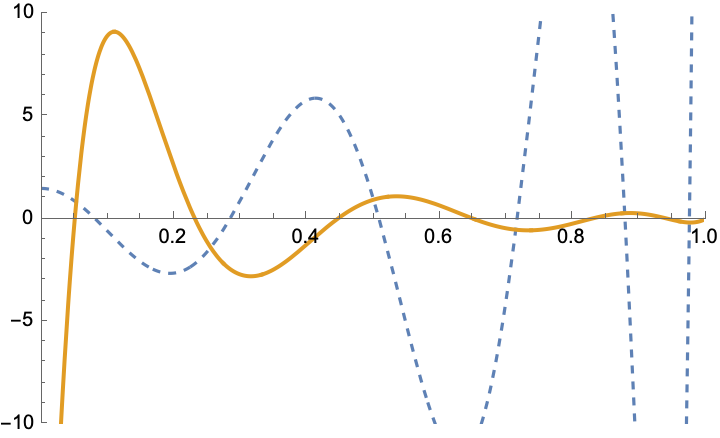}
			\end{overpic} 
			& 
			\begin{overpic}[scale=0.55]{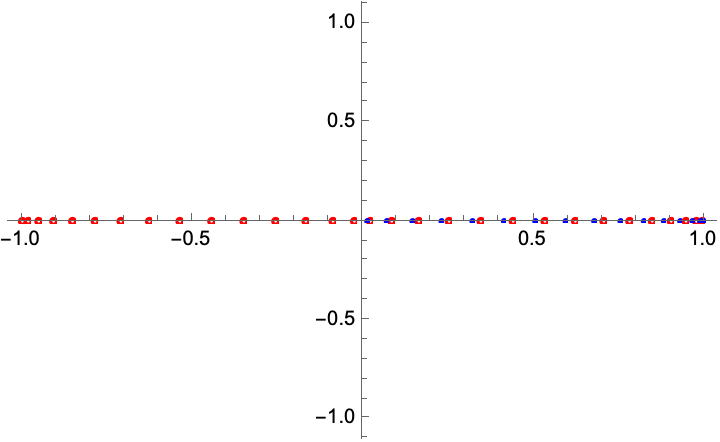}
			\end{overpic}
		\end{tabular}
		\caption{Appell's polynomials ($\alpha=\beta=\gamma=0$). Left: graph of  $P_{\bm n}$ (dashed line) and $S_{\bm n, 1}$ (thick line)  on $[0,1]$ for  $\bm n=(6,6)$ in the   case. Right: zeros of    $P_{\bm n}$  (empty circles, all  on  $[-1,1]$) and of $S_{\bm n, 1}$ (filled circles, all on $[0,1]$)   for  $\bm n=(15,15)$. }\label{Fig1Appel}
	\end{figure}

	In Appell's case, $w\equiv 1$ and $a=-1$,  
	$$
	P_{\bm n}(x)= \left( \frac{d}{dx}\right)^n \left[ x(x^2-1)   \right]^n  ,
	$$
	and
	$$
	S_{\bm n, 1} (x)=(-1)^{n+1} 	S_{\bm n, 2} (-x), \quad \deg S_{\bm n, 1}=n+1.
	$$
	These explicit formulas allow to use symbolic computation (e.g.~Mathematica) to find explicit expressions for small values of $n$. For instance, for $\bm n=(6,6)$,  and up to normalization,
	\begin{align*}
	P_{\bm n}(x) & = x^{12}-\frac{44 x^{10}}{17}+\frac{165 x^8}{68}-\frac{220 x^6}{221}+\frac{75 x^4}{442}-\frac{2 x^2}{221}+\frac{1}{18564}, \\
	 S_{\bm n, 1}(x) &= (x-1) \left(12288 x^6-38763 x^5+47253 x^4-27822 x^3+8018 x^2-991 x+33\right),
	\end{align*}
	and their graphs are plotted in Figure~\ref{Fig1Appel}. We can clearly observe the interlacing predicted by Proposition~\ref{cor:zerosS12ang}, which in the limit $n\to\infty$ gives the description in therms of the Angelesco equilibrium problem, as described in Section~\ref{sec:AsymptAngelesco}.

However, according to Remark~\ref{rem:linearcomb}, the critical configuration for the zeros of $P_{\bm n}$ in this case is not unique: we can use for the second component the zeros of any linear combination of $S_{\bm n, 1}$ and $S_{\bm n, 2}$. In Figure~\ref{Fig2Appell} we illustrate the behavior of zeros of $S_{\bm n, 1} + t S_{\bm n, 2}$, for different values of $0<t\leq 1$ (notice that the representation of the zeros of $S_{\bm n, 1}$ in Figure~\ref{Fig1Appel}, right, corresponds to $t=0$). 

\begin{figure}[h]
	\centering 
	\hspace{-3mm} \begin{tabular}{cc}
		\begin{overpic}[scale=0.55]{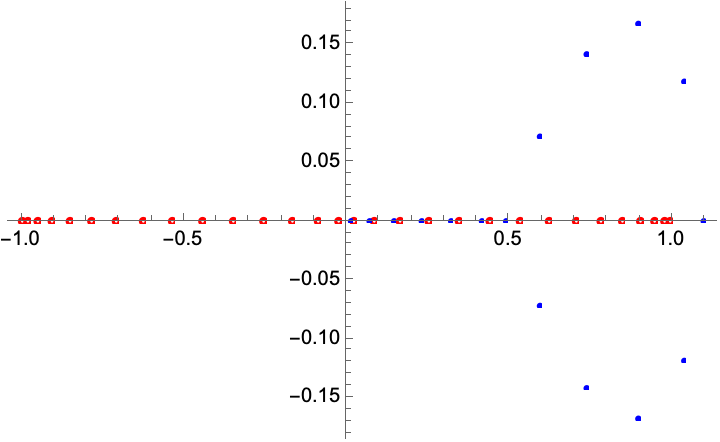}
		\end{overpic} 
		& 
		\begin{overpic}[scale=0.55]{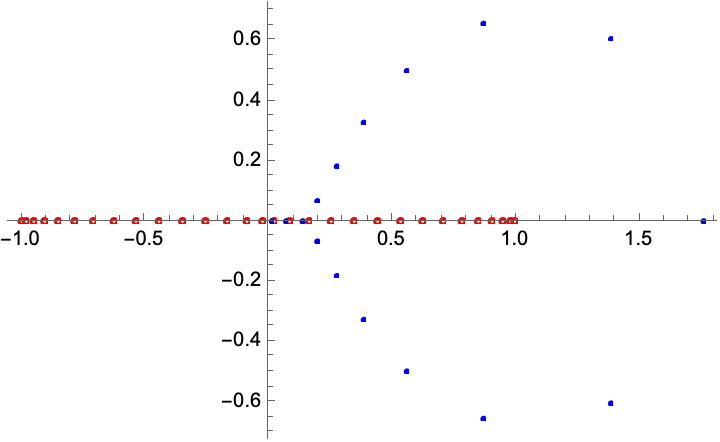}
		\end{overpic} \\
	\begin{overpic}[scale=0.55]{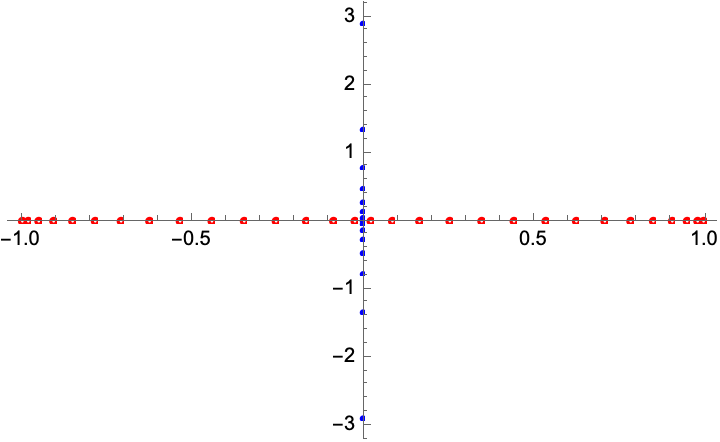}
	\end{overpic} 
	& 
	\begin{overpic}[scale=0.55]{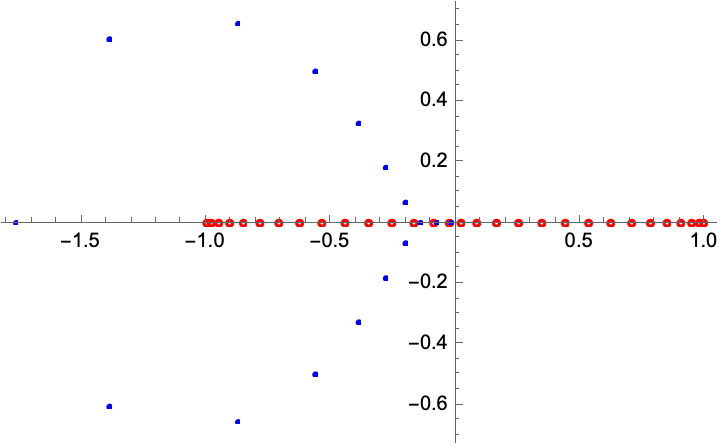}
	\end{overpic}
	\end{tabular}
	\caption{Appell's polynomials ($\alpha=\beta=\gamma=0$) and $\bm n=(35,35)$: zeros of $P_{\bm n}$ (indicated by empty circles, all  on $(-1,1)$) and of $S_{\bm n, 1} + t S_{\bm n, 2}$ (filled circles)  for  $t=10^{-10}$ (top left), $t=10^{-5}$ (top right), $t=1$ (bottom left), and $t=10^{5}$ (bottom right). }\label{Fig2Appell}
\end{figure}

\subsection{Multiple orthogonal polynomials for the cubic weight}

Although we have not discussed the purely complex weights, we finish our presentation with the illustrative example of polynomials  $P_{\bm n}$, $\bm n=(n_1, n_2)$, satisfying orthogonality relations
\begin{equation*}
	\begin{split}
		&\int_{\Delta_1} z^{k} P_{\bm n}(z)e^{-z^3} d z=0, \quad k=0,1,2, \dots, n_1-1, \\
		&\int_{\Delta_2} z^{k} P_{\bm n}(z)e^{-z^3} d z=0, \quad k=0,1,2, \dots, n_2-1,
	\end{split}
\end{equation*}
where  $\Delta_1$ and $\Delta_2$ are contours on the complex plane, extending to $\infty$ on their two ends along the directions determined by the angles $-2\pi/3$ and $0$, and $-2\pi/3$ and $2\pi/3$, respectively. They were introduced in \cite{MR3304586}  and studied in full generality in \cite{MR3939592}.

In our notation,
$$
A_1(x)=A_2(x)=A(x)=1,  \quad B_1(x)=B_2(x)=B(x)= -3x^2.
$$
Detailed explanation of the algorithm of computation of the zeros of $P_{\bm n}$ was given in \cite[Section 9]{MR3939592}. Since the moments of the weight were given explicitly, we use the expansion of $\mathfrak D_{w }[P_{\bm n}]$ at infinity to calculate the expressions for $S_{\bm n, 1} $ and $ S_{\bm n, 2}$, see Figure~\ref{FigCubic}.

It is interesting to compare the location of the zeros of $S_{\bm n, 1} $ with those of  \textit{type I} multiple orthogonal polynomials  $\mathfrak A_{\bm n}$ and $\mathfrak B_{\bm n}$,  defined by the following conditions:
$$
\deg \mathfrak A_{\bm n} \leq n-1, \quad \deg \mathfrak B_{\bm n}\leq  m-1,
$$
and
\begin{equation}\label{mops_conditionsTypeI}
	\begin{aligned}
		\int_{\Delta_1}z^k \mathfrak A_{\bm n}(z) e^{- z^3}dz + 	\int_{\Delta_2}z^k \mathfrak B_{\bm n}(z) e^{- z^3}dz =0, & \quad k=0,\dots, N-2, \\
		\int_{\Delta_1}z^k \mathfrak A_{\bm n}(z) e^{- z^3}dz + 	\int_{\Delta_2}z^k \mathfrak B_{\bm n}(z) e^{- z^3}dz =1, & \quad k=N-1,
	\end{aligned}
\end{equation}
where $N=n+m$. According to Figure~\ref{FigCubic}, the zeros of $\mathfrak B_{\bm n}$ ``interlace'' with the zeros of $S_{\bm n, 1} $, which brings up a natural question of a possible connection of these two polynomials.

\begin{figure}[h]
	\centering 
		\begin{overpic}[scale=0.7]{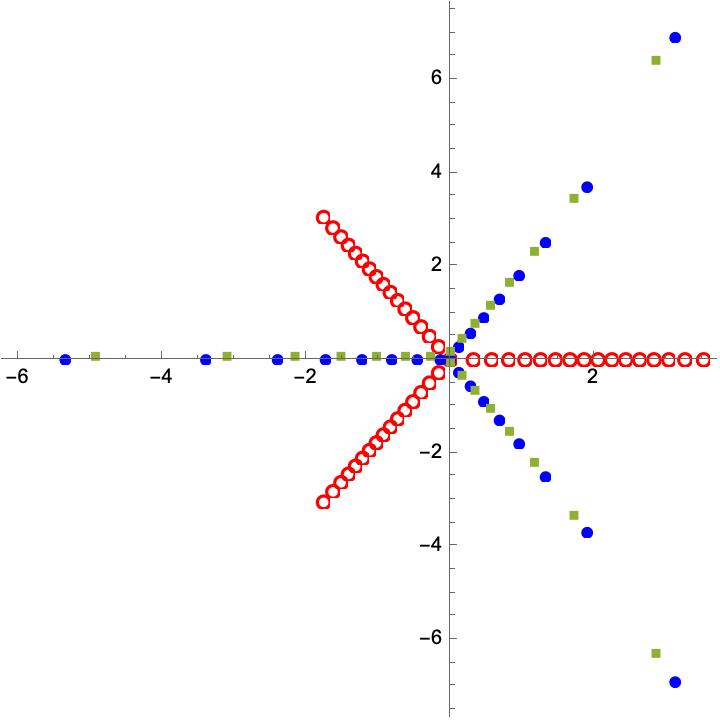}
		\end{overpic} 
	\caption{Zeros of  the multiple orthogonal polynomial  $P_{\bm n}$ with respect to the cubic weight  (indicated by empty circles, part of them on the positive semiaxis, forming a symmetric star) and of $S_{\bm n, 1}$ (filled circles)   for  $\bm n=(25,25)$.  For comparison, zeros of type I MOP $\mathfrak B_{\bm n}$ (filled squares) are also represented. }\label{FigCubic}
\end{figure}

\appendix

\section{Properties of the electrostatic partner} \label{appendixA}

\begin{prop} \label{prop:fromTHM2.1}
	If $P$ is a polynomial of degree $N\in \N$, $q$ the function of the second kind \eqref{defqN}, and $S$ is the electrostatic partner of $P$ defined in \eqref{defCompanion} then:
	\begin{enumerate}[a)]
		\item if $z_0\in \C$ is a zero of $P$ of multiplicity $k\geq 1$ then $S$ also has a root at $z_0$ with multiplicity at least $k-1$. If in addition $A(z_0)=0$ then the multiplicity of $z_0$ in $S$ is at least $k$.
		
		\item if $z_0\in \C\setminus \Delta $ is a zero of $ \mathfrak C_w[P]$ of multiplicity $k\geq 1$ then $S$ also has a root at $z_0$ with multiplicity at least $k-1$. If in addition $A(z_0)=0$ then the multiplicity of $z_0$ in $S$ is at least $k$.
		
		\item Let $\Omega$ be a simply-connected domain such that $A(z)\neq 0$ for $z\in \Omega$, $\mathfrak C_w[P]$ holomorphic in $\Omega$ and let $w$ be a holomorphic branch of this function in $\Omega$. If $z_0$ is a boundary point of $\Omega$, then
		\begin{equation} \label{limitbounded}
			\lim_{z\to z_0} (z-z_0) A^k (z) w(z)\frac{d^k}{dz^k}q(z)=0,\qquad k=0,1,\dots,
		\end{equation}
		where we take non-tangential limit with $z\in \Omega$.
	\end{enumerate}
\end{prop}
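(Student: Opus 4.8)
The plan is to deduce all three statements from the two explicit expressions for $S=\mathfrak D_w[P]$ already at hand. Expanding the determinant in \eqref{defTransform2} gives
\[ S = AP\,(\mathfrak C_w[P])' - (AP'+BP)\,\mathfrak C_w[P], \]
whereas \eqref{polynD}, with $U=AP'-EP$ (of degree $\le N-1$) and $H$ (of degree $\le\sigma$), rewrites the same polynomial as
\[ S = P\bigl(\mathfrak C_w[U]+H\bigr) - U\,\mathfrak C_w[P]. \]
The advantage of the second form is that no derivative of a Cauchy transform occurs, so I will only need the boundedness of $\mathfrak C_w[P]$ and $\mathfrak C_w[U]$ near the point in question.

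For part (a), let $\ord_{z_0}P=k$. From $U=AP'-EP$ one gets $\ord_{z_0}U\ge\min\{\ord_{z_0}A+k-1,\,k\}$, hence $\ord_{z_0}U\ge k-1$ if $A(z_0)\neq0$ and $\ord_{z_0}U\ge k$ if $A(z_0)=0$. Since $S$ is entire (Theorem~\ref{lem1aux}), its order of vanishing at $z_0$ is governed by the size of $|S(z)|$ as $z\to z_0$ through $\C\setminus\Delta$. The arcs of $\Delta$ meet $\mathbb A\cup\{\infty\}$ only at their endpoints, so either $z_0$ is interior to $\Delta$ (then $A(z_0)\neq0$ and the one-sided values of $\mathfrak C_w[P],\mathfrak C_w[U]$ are bounded), or $z_0$ is an endpoint (then $A(z_0)=0$ and these transforms grow at most logarithmically, by \cite[Ch.~1]{Gakhov}), or $z_0\notin\Delta$ (holomorphic, hence bounded). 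In each case the second representation gives $|S(z)|\le C\,|z-z_0|^{k-1}|\log|z-z_0||$ in general, and $|S(z)|\le C\,|z-z_0|^{k}|\log|z-z_0||$ when $A(z_0)=0$; as a logarithm is dominated by any positive power, this forces $\ord_{z_0}S\ge k-1$, respectively $\ge k$. For part (b) the point $z_0\in\C\setminus\Delta$ is off the contour, so $\mathfrak C_w[P]$ is holomorphic there and I can simply count orders in the first representation: if $\ord_{z_0}\mathfrak C_w[P]=k$, then $(\mathfrak C_w[P])'$ has order $k-1$, so $AP(\mathfrak C_w[P])'$ has order $\ge\ord_{z_0}A+k-1$ while $(AP'+BP)\mathfrak C_w[P]$ has order $\ge k$; thus $\ord_{z_0}S\ge k-1$, and $\ge k$ when $A(z_0)=0$, since the extra factor $A$ lifts the first term to order $\ge k$ as well.

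Part (c), which I expect to be the main obstacle, rests on a structural lemma proved by induction on $k$: there are polynomials $\rho_{k,i}$, $T_{k,i}$, $\pi_k$ with
\[ A^k\,w\,q^{(k)} = \sum_i \rho_{k,i}\,\mathfrak C_w[T_{k,i}] + \pi_k, \]
that is, a combination of order-zero Cauchy transforms with polynomial coefficients. The base case $k=0$ is $wq=\mathfrak C_w[P]$. For the inductive step I differentiate, multiply by $A$, and use $w'=wB/A$ together with the identity $A(\mathfrak C_w[T])'=\mathfrak C_w[AT'+BT]+D_T$ from \eqref{identity1bis} (which returns every differentiated Cauchy transform to order zero), arriving at the recursion $A^{k+1}wq^{(k+1)}=A\,(A^k w q^{(k)})'-(kA'+B)\,(A^k w q^{(k)})$, which preserves the stated form. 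Granting the lemma, near a finite boundary point $z_0$ each $\mathfrak C_w[T_{k,i}]$ grows strictly more slowly than $(z-z_0)^{-1}$ --- bounded at interior points of $\Delta$, of order $(z-z_0)^{\lambda}$ with $\lambda>-1$ at an endpoint by \cite[Ch.~1]{Gakhov}, holomorphic off $\Delta$ --- and the polynomial factors remain bounded; multiplying by $(z-z_0)$ and taking the nontangential limit $z\to z_0$ then yields \eqref{limitbounded}. The two subtle ingredients are precisely this uniform boundary estimate for Cauchy-type integrals and the bookkeeping that keeps all transforms at order zero throughout the recursion; both are supplied by \eqref{identity1bis} and the classical Plemelj--Gakhov theory.
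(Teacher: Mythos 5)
Your proposal is correct, and for the substantive part (c) it takes a genuinely different route from the paper. The paper proves (c) by setting $h_k(z):=(z-z_0)A^k(z)w(z)q^{(k)}(z)$ and inducting on $k$: from $h_k\to 0$ it deduces $(z-z_0)h_k'\to 0$ via the auxiliary Lemma~\ref{lemmaBD} (a Cauchy-estimate argument on sub-sectors showing that a nontangential $o(z-z_0)$ bound on a holomorphic function forces its derivative to vanish nontangentially), and then extracts $h_{k+1}\to 0$ from the identity $Ah_k'=h_k\bigl(\tfrac{A}{z-z_0}+kA'+B\bigr)+h_{k+1}$. You instead push the induction into an algebraic representation $A^kwq^{(k)}=\sum_i\rho_{k,i}\,\mathfrak C_w[T_{k,i}]+\pi_k$, which is exactly what the recursion $g_{k+1}=Ag_k'-(kA'+B)g_k$ together with \eqref{identity1bis} delivers, and then you apply the single boundary estimate $(z-z_0)\mathfrak C_w[T](z)\to 0$ (the paper's \eqref{local1}, a consequence of the finite-moment assumption and dominated convergence under nontangential approach). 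Your version dispenses with Lemma~\ref{lemmaBD} entirely and yields, as a by-product, a global formula for $A^kwq^{(k)}$ as a combination of order-zero Cauchy transforms; the paper's version is shorter but needs the extra complex-analytic lemma. For (a)--(b) both arguments are order-counting in the determinant; your use of the derivative-free representation \eqref{polynD} for $z_0\in\Delta$ is a nice simplification, since it avoids having to control $(\mathfrak C_w[P])'$ on the contour (the paper is in fact rather terse on exactly that point).

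One small inaccuracy worth fixing: at an arc endpoint (a zero of $A$) the Cauchy transforms need not grow ``at most logarithmically'' --- if the weight behaves like $|t-z_0|^{\alpha}$ with $-1<\alpha<0$, then $\mathfrak C_w[T]$ grows like $|z-z_0|^{\alpha}$. This does not damage your argument, because all you use is that the growth is $o(|z-z_0|^{-1})$, which is precisely \eqref{local1} and holds under the standing integrability assumption; but the bound $|S(z)|\le C|z-z_0|^{k-1}\bigl|\log|z-z_0|\bigr|$ should be replaced by $|S(z)|=o(|z-z_0|^{k-2})$ (respectively $o(|z-z_0|^{k-1})$ when $A(z_0)=0$), which still forces $\ord_{z_0}S\ge k-1$ (respectively $\ge k$) since $S$ is a polynomial.
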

\begin{proof}
	Assume $z_0\notin \Delta$. If $z_0\in \C$ is a zero of $p$ of multiplicity $k\geq 1$ then the assertion in a) follows directly from the definition \eqref{defTransform2}. Same argument works to prove a) if we assume that $z_0\in \C$ is a zero of $\mathfrak C_w[P]$ of multiplicity $k\geq 1$. 
	
	On the other hand, from the assumption \eqref{endpoints} it follows that for $z_0\in \Delta$,
	\begin{equation} \label{local1}
		\lim_{z\to z_0} (z-z_0) \,  \mathfrak C_w[P](z)=0,  
	\end{equation}
	which proves b) also in this case. 
	
	We turn now to c). 	If $A(z_0)\neq 0$ then both $w^{-1}$ and $q$ are bounded at $z_0$ and the assertion is obvious. Hence, let $A(z_0)=0$.

	Denote
	$$
	h_k(z):=(z-z_0) A^k(z) w(z)\frac{d^k}{dz^k}q(z), \quad k=0, 1, 2, \dots
	$$
	Obviously, $h_0(z)= (z-z_0) \,  \widehat p (z)$, and \eqref{limitbounded} for $k=0$ is consequence of \eqref{local1}. 
	
	Using the induction in $k$, assume that  \eqref{limitbounded} is establised for a certain $k\ge 0$, i.e.
	$$
	\lim_{z\to z_0}  h_k(z)=0,
	$$ 
	where we always  take non-tangential limit from $ \Omega$. 
	By Lemma~\ref{lemmaBD} below, 
	\begin{equation} \label{local2}
		\lim_{z\to z_0} (z-z_0) h'_k(z)=0.
	\end{equation} 
	Thus, using \eqref{ratW}, 
	\begin{align*}
		0 & = \lim_{z\to z_0} (z-z_0) h'_k(z) = \lim_{z\to z_0} A(z) h'_k(z) \\
		& = \lim_{z\to z_0} \left[ h_k(z) \left(\frac{A(z)}{z-z_0} + kA'(z) + A(z) \frac{w'(z)}{w(z)} \right) + h_{k+1}(z) \right] \\
		& =  \lim_{z\to z_0} \left[  h_k(z) \left(\frac{A(z)}{z-z_0} + kA'(z) + B(z)  \right) + h_{k+1}(z) \right] =  \lim_{z\to z_0}h_{k+1}(z),
	\end{align*}
	which proves \eqref{limitbounded}. 
\end{proof}

\begin{lem}\label{lemmaBD}
	Let $\Omega$ be a domain, $z_0\in \partial \Omega$ a boundary point satisfying the following property: there exists a sector  $\Gamma:=\{ z\in \C:\, |\arg(z-z_0) - \theta_0|<\delta \}$, with certain $\theta_0\in [0,2\pi)$, $\delta>0$, such that for a sufficiently small $r>0$, $\Gamma \cap \{ z\in \C:\,  |z-z_0|<r\}\subset \Omega$.
	If function $f$ is holomorphic in $\Omega$ and 
	$$
	\lim_{z\to z_0, \, z\in \Gamma} \frac{f(z)}{z-z_0}=0,
	$$
	then
	$$
	\lim_{z\to z_0, \, z\in \Gamma} f'(z)=0.
	$$
\end{lem}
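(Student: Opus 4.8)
The plan is to bound $f'$ by Cauchy's integral formula on a circle whose radius is comparable to $|z-z_0|$, and then feed the hypothesis $f(\zeta)=o(|\zeta-z_0|)$ into the elementary estimate $|f'(z)|\le \rho^{-1}\max_{|\zeta-z|=\rho}|f(\zeta)|$. First I would normalize: translating by $z_0$ and rotating by $e^{-i\theta_0}$ reduces to $z_0=0$ and $\theta_0=0$, so that $\Gamma=\{\,|\arg u|<\delta\,\}$ and the hypothesis becomes $f(u)/u\to 0$ as $u\to 0$ within $\Gamma$; the chain rule shows $|f'|$ is unchanged by this rigid motion, so no generality is lost.

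Fix a closed sub-sector $\overline{\Gamma}_{\delta'}=\{\,|\arg u|\le \delta'\,\}$ with $0<\delta'<\delta$. The key geometric step is that for $u\in\overline{\Gamma}_{\delta'}$ with $|u|$ small, the closed disk $\overline{D}(u,\rho)$ with $\rho:=\tfrac12|u|\sin(\delta-\delta')$ lies inside $\Gamma\cap\{\,|\zeta|<r\,\}\subset\Omega$: indeed $\rho\le\tfrac12\,\mathrm{dist}(u,\partial\Gamma)$, because the distance of $u$ to the nearer bounding ray is at least $|u|\sin(\delta-\delta')$, while $\rho<|u|$ keeps the disk from encircling the origin and $|u|+\rho<r$ holds once $|u|$ is small. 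On this circle every point $\zeta$ satisfies $\tfrac12|u|\le|\zeta|\le\tfrac32|u|$, so $|\zeta|\asymp|u|$, $\zeta\in\Gamma$, and $\zeta\to 0$ uniformly as $u\to 0$; hence the hypothesis yields $\max_{|\zeta-u|=\rho}|f(\zeta)|=o(|u|)$.

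Finally, Cauchy's formula for the derivative gives $|f'(u)|\le\rho^{-1}\max_{|\zeta-u|=\rho}|f(\zeta)|=o(|u|)\big/\big(\tfrac12|u|\sin(\delta-\delta')\big)\to 0$ as $u\to 0$ within $\overline{\Gamma}_{\delta'}$; since non-tangential approach to $z_0$ is by definition approach within some such closed sub-sector strictly inside the hypothesis sector $\Gamma$, this is exactly the conclusion used in the proof of Proposition~\ref{prop:fromTHM2.1}. The main obstacle I anticipate is uniformity up to the edge: the comparability constant $\sin(\delta-|\arg u|)$ in $\rho\asymp|u|$ degenerates as $u$ approaches the two bounding rays of $\Gamma$, so a single inscribed disk of radius proportional to $|u|$ cannot be fitted for tangential approach, and the bare Cauchy estimate does not extend uniformly to the full open sector. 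This is precisely why I would read and apply the statement non-tangentially; securing the limit for genuinely tangential approach would require a finer, contour-adapted argument rather than a single circle, which is not needed for the application.
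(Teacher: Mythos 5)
Your argument is correct and is essentially the paper's own proof: both inscribe a disk of radius comparable to $|z-z_0|$, centered at the point and contained in $\Gamma$, and then apply the Cauchy estimate $|f'(z)|\le \rho^{-1}\max_{|\zeta-z|=\rho}|f(\zeta)|$ together with $f(\zeta)=o(|\zeta-z_0|)$ and $|\zeta-z_0|\asymp|z-z_0|$ on that circle. The caveat you raise about the estimate degenerating near the bounding rays is shared by the paper's proof, whose bound $\varepsilon(1+q)/q$ on the sub-sector $\Gamma_q$ blows up as $q\to 0$, and, as you observe, only the non-tangential version is needed in the application to Proposition~\ref{prop:fromTHM2.1}.
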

\begin{proof}
	Without loss of generality we can assume that $z_0=0$ and $\theta_0=0$.
	
	Each $0<q<1$ defines a sub-sector  $\Gamma_q$ of $\Gamma$ given by
	$$
	\Gamma_q:=\{ z\in \Gamma:\, \left|\tan \left(\delta - \arg(z) \right)\right|  \ge q \}.
	$$
	Notice that as $q\to 0$, $\Gamma_q$ exhausts $\Gamma$.
	
	Fix a  $0<q<1$ and an arbitrarily small $\varepsilon>0$. By assumptions, there exists $0<r'=r'(\varepsilon)<r$ such that 
	$$
	t\in \Gamma, \, |t|<r' \quad \Rightarrow \quad |f(t)|\le \varepsilon |t|.
	$$
	Let $z\in \Gamma_q$; by construction, the circle $C_z:= \{t\in \C:\, |t-z|= q |z| \} \subset \Gamma$. We assume $|z|$ small enough so that $|t|<r'$ for all  $t\in C_z$. Then
	$$
	f'(z)=\frac{1}{2\pi i} \oint_{C_z} \frac{f(t)}{(t-z)^2}dt,
	$$
	and
	$$
	|f'(z)|\le \frac{1}{q |z|} \max_{t\in C_z} |f(t)| \le  \frac{\varepsilon}{q |z|} \max_{t\in C_z} |t|=\frac{\varepsilon}{q  } \frac{|z| + q|z|}{|z|}= \frac{\varepsilon (1+q)}{q  },
	$$
	which proves the assertion.
\end{proof}

\section{Electrostatic partner in the real case} \label{sec:realcase}

The construction in Section \ref{sec:quasiorth} is carried out in a very general setup. In this Appendix, we prove a technical result valid in the real case, when $A$ and $B$ have real coefficients, 
\begin{equation} \label{realcase}
	\Delta\subset \mathbb R \quad \text{ and } \quad w(x)\geq 0 \text{ on } \Delta.
\end{equation}
We assume that $P \not \equiv 0$ is a  polynomial with real coefficients, and as before, denote   $S=\mathfrak D_{w}[P] $.
\begin{lem} \label{propInterlacing1}
	Let $\zeta_1<\zeta_2$ be two consecutive real zeros of $P$, such that $(\zeta_1,\zeta_2)  \cap \Delta=\emptyset$, $S(\zeta_j)\ne 0$ for $j=1, 2$, and $A$ preserves sign on $(\zeta_1,\zeta_2)$. Then $S\,  \mathfrak C_w[P]$ does change sign in $(\zeta_1,\zeta_2)$.
	
	Analogously, if $y_1<y_2$ are two consecutive real zeros of $\mathfrak C_w[P]$, such that $(y_1,y_2)  \cap \Delta=\emptyset$, $(AS)(y_j)\ne 0$ for $j=1, 2$, and $A$ preserves sign on $(y_1,y_2)$, then $S P $  changes sign in $(y_1,y_2)$.
\end{lem}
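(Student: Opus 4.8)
The plan is to prove both assertions by the same device: I will show that the product in question takes values of opposite sign at the two endpoints of the interval and then invoke the intermediate value theorem. I would carry out the first assertion in detail and deduce the second by interchanging the roles of $P$ and $\mathfrak C_w[P]$. Writing $C:=\mathfrak C_w[P]$, the first step is to note that the endpoint zeros are simple: by part a) of Proposition~\ref{prop:fromTHM2.1}, a zero $\zeta_j$ of $P$ of multiplicity $k$ forces $S$ to vanish there to order at least $k-1$, so the hypothesis $S(\zeta_j)\neq 0$ gives $k=1$. Expanding the determinant in \eqref{defTransform2} yields the polynomial identity
\begin{equation*}
S = A\,(P\,C' - C\,P') - B\,P\,C,
\end{equation*}
valid on $\C\setminus\Delta$, and evaluating at $\zeta_j$ (where $P(\zeta_j)=0$) collapses it to $S(\zeta_j) = -A(\zeta_j)\,P'(\zeta_j)\,C(\zeta_j)$.

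From this, $S(\zeta_j)\,C(\zeta_j) = -A(\zeta_j)\,P'(\zeta_j)\,C(\zeta_j)^2$, and since $S(\zeta_j)\neq 0$ all three factors $A(\zeta_j),P'(\zeta_j),C(\zeta_j)$ are nonzero, so the sign of $S(\zeta_j)\,C(\zeta_j)$ equals $-\operatorname{sign}\!\big(A(\zeta_j)P'(\zeta_j)\big)$. The sign bookkeeping is then immediate: because $A$ keeps a constant nonzero sign on $(\zeta_1,\zeta_2)$ one has $\operatorname{sign}A(\zeta_1)=\operatorname{sign}A(\zeta_2)$, while because $\zeta_1,\zeta_2$ are consecutive simple zeros of the real polynomial $P$ the derivative satisfies $\operatorname{sign}P'(\zeta_1)=-\operatorname{sign}P'(\zeta_2)$. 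Hence $S(\zeta_1)C(\zeta_1)$ and $S(\zeta_2)C(\zeta_2)$ are nonzero of opposite sign, and as $S\,C$ is real and continuous on $[\zeta_1,\zeta_2]$, the intermediate value theorem produces the desired sign change in the interior.

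For the second assertion I would argue symmetrically. Part b) of Proposition~\ref{prop:fromTHM2.1}, together with $(AS)(y_j)\neq 0$ (so $A(y_j)\neq 0$ and $S(y_j)\neq 0$), shows that $y_j$ is a simple zero of $C$. Evaluating the same identity for $S$ at $y_j$, where now $C(y_j)=0$, gives $S(y_j)=A(y_j)\,P(y_j)\,C'(y_j)$, so that $S(y_j)\,P(y_j)=A(y_j)\,P(y_j)^2\,C'(y_j)$ carries the sign of $A(y_j)C'(y_j)$. Exactly as before, $A$ keeps its sign across $(y_1,y_2)$ whereas $C'$ flips sign between the consecutive simple zeros $y_1,y_2$ of the real function $C$; thus $S\,P$ has opposite signs at the endpoints, and the intermediate value theorem finishes the proof.

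The main obstacle I anticipate is not the algebra but the analytic bookkeeping at the endpoints: one must ensure that $C=\mathfrak C_w[P]$ (and, in the second part, $C'$) is genuinely real and continuous up to $\zeta_j$ (resp.\ $y_j$). This is precisely where the assumption that the interval is disjoint from $\Delta$ and the reality hypotheses \eqref{realcase} on $A$, $B$, $w$ are used, guaranteeing that the integrand defining $C$ is real and nonsingular on the relevant interval; the finiteness and nonvanishing of $S$ at the endpoints, supplied by hypothesis, then make the evaluation formulas for $S(\zeta_j)$ and $S(y_j)$ meaningful.
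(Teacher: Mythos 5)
Your proof is correct and follows essentially the same route as the paper: evaluate the determinant defining $S=\mathfrak D_w[P]$ at the endpoint zeros to get $S(\zeta_j)=-A(\zeta_j)\,P'(\zeta_j)\,\mathfrak C_w[P](\zeta_j)$ (resp. $S(y_j)=A(y_j)\,P(y_j)\,(\mathfrak C_w[P])'(y_j)$), observe that the derivative factor flips sign between consecutive simple zeros while $A$ does not, and conclude by the intermediate value theorem. The only cosmetic difference is that for simplicity of the zeros $y_j$ of $\mathfrak C_w[P]$ you invoke part b) of Proposition~\ref{prop:fromTHM2.1} together with $S(y_j)\neq 0$, whereas the paper appeals to the differential equation \eqref{odegeneral}; both are valid.
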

\begin{proof}
	Since $S(\zeta_j)\ne 0$, $j=1, 2$, it follows from b) in  Proposition~\ref{prop:fromTHM2.1} that these are simple zeros of $P$, $A(\zeta_j)\ne 0$, $j=1, 2$, and thus, $P'(\zeta_1)  P'(\zeta_2)<0$. 	
	Evaluating in the definition \eqref{defTransform2}, we get
	$$
	S(\zeta_j) =\mathfrak D_{w}[P](\zeta_j)  = - A(\zeta_j) \mathfrak C_w[P] (\zeta_j) P' (\zeta_j), \quad j=1, 2,
	$$
	so that
	$$
	\left( 	S\,  \mathfrak C_w[P] \right)(\zeta_j)   = - \left( A  (\mathfrak C_w[P])^2    P' \right) (\zeta_j), \quad j=1, 2.
	$$
	Since $A$ preserves sign on $[\zeta_1,\zeta_2]$, we get that   
	$$
	\left( 	S \mathfrak C_w[P]  \right) (\zeta_1) \left( 	S \mathfrak C_w[P]	  \right) (\zeta_2) <0.
	$$
	This proves the first assertion. 
	
	Similarly from \eqref{defTransform2}, 
	$$
	S(y_j)= \left( A  \left( \mathfrak C_w[P]\right)'   P \right)  (y_j), \quad j=1, 2,
	$$
	so that
	$$
	(SP)(y_j)= \left( A  \left( \mathfrak C_w[P]\right)'   P^2\right)  (y_j), \quad j=1, 2.
	$$
	If $y_1<y_2$ do not coincide with the zeros of $AS $, then as \eqref{odegeneral} shows, these are simple zeros of $\widehat p$, so that
	$$
	\left( \mathfrak C_w[P] \right)' (y_1) \left( \mathfrak C_w[P] \right)' (y_2) <0,
	$$
	and we conclude that
	$$
	\left( S  P \right) (y_1)\left( S P \right) (y_2)<0.
	$$
\end{proof}

\section*{Acknowledgments}

The first author was partially supported by Simons Foundation Collaboration Grants for Mathematicians (grant 710499).
He also acknowledges the support of the Spanish Government and the European Regional Development Fund (ERDF) through grant PID2021-124472NB-I00, Junta de Andaluc\'{\i}a (research group FQM-229 and Instituto Interuniversitario Carlos I de F\'{\i}sica Te\'orica y Computacional), and by the University of Almer\'{\i}a (Campus de Excelencia Internacional del Mar  CEIMAR) in the early stages of this project. 

The second and third authors were partially supported by Spanish Ministerio de Ciencia, Innovación y Universidades, under grant MTM2015-71352-P.

The third author was additionally supported by Junta de Andalucía (research group FQM-384), the University of Granada  (Research Project ERDF-UGR A-FQM-246-UGR20), and by the IMAG–Maria de Maeztu grant CEX2020-001105-M/AEI/10.13039/501100011033.

The authors also grateful to Alexandre Eremenko, who suggested the idea of the proof of Lemma~\ref{lemmaBD}, and to the anonymous referees, whose outstanding work helped to considerably improve this manuscript.


%

\def\cprime{$'$}

\end{document}